\documentclass[a4paper, 12pt]{amsart}
\usepackage{amsmath,amsthm,amssymb,enumerate}
\usepackage{multicol}

\usepackage[matrix,arrow,curve,frame]{xy}
\usepackage{latexsym}
\usepackage{amscd}
\usepackage{amssymb}
\usepackage{stmaryrd}
\usepackage{mathtools}
\usepackage{mathrsfs}
\usepackage{braket}
\usepackage{fullpage}
\usepackage{xcolor}
\usepackage[shortlabels]{enumitem}
\usepackage{multirow}
\usepackage{adjustbox}

\usepackage{multicol}
\usepackage{booktabs}
\usepackage{makecell}
\allowdisplaybreaks

\usepackage{tikz-cd}
\usepackage{tikz} 
\usepackage{rotating}
\usepackage{enumitem}
\newlist{noitemize}{itemize}{1}

\DeclareMathOperator{\rk}{rk}
\DeclareMathOperator{\diag}{diag}
\DeclareMathOperator{\Mat}{Mat}
\DeclareMathOperator{\Gr}{Gr}
\DeclareMathOperator{\im}{Im}
\DeclareMathOperator{\SL}{SL}

\DeclareMathOperator{\End}{End}

\newtheorem{theorem}{Theorem}[section]
\newtheorem{prop}[theorem]{Proposition}
\newtheorem{lemma}[theorem]{Lemma}
\newtheorem{example}[theorem]{Example}

\theoremstyle{definition}

\newtheorem{remark}[theorem]{Remark}

\setlength{\textheight}{8.6in} \setlength{\textwidth}{35pc}
\setlength{\topmargin}{-.1in} \setlength{\oddsidemargin}{.25in}
\setlength{\evensidemargin}{\oddsidemargin}


\begin{document}

	\title{Nilpotent varieties in symmetric spaces and twisted affine Schubert varieties}
	\author{Jiuzu Hong}
\address{Department of Mathematics, University of North Carolina at Chapel Hill, Chapel Hill, NC 27599-3250, U.S.A.}
\email{jiuzu@email.unc.edu}
\author{Korkeat Korkeathikhun}
\address{Department of Mathematics, National University of Singapore, 119076, Singapore}
\email{korkeatk@nus.edu.sg, korkeat.k@gmail.com}

	\maketitle

\begin{abstract}
We relate the geometry of Schubert varieties in twisted affine Grassmannian and the nilpotent varieties in symmetric spaces. This  extends some results of Achar--Henderson in the twisted setting. We also get some applications to the geometry of the order 2 nilpotent varieties in certain classical symmetric spaces. 
\end{abstract}

\section{Introduction}
Let $G$ be a reductive group over $\mathbb{C}$.  Let $\mathcal{N}$ denote the nilpotent cone of the Lie algebra $\mathfrak{g}$ of $G$. Let $\Gr_G$ be the affine Grassmannian of $G$.   Each spherical Schubert cell $\Gr_\lambda$ is parametrized by a dominant coweight $\lambda$.   When $G=\mathrm{GL}_n$,  Lusztig \cite{Lu} defined an embedding from $\mathcal{N}$ to  $\Gr_G$, and showed that each nilpotent variety in $\mathfrak{gl}_n$ can be openly embedded into certain affine Schubert variety $\overline{\Gr}_\lambda$.  This embedding identifies the geometry of nilpotent varieties and certain affine Schubert varieties in type $A$.  However, there is no direct generalization for general reductive groups. 

In \cite{AH},  Achar--Henderson took a different idea for a general algebraic simple group $G$.  Let $\Gr_0^-$ be the opposite open Schubert cell in $\Gr_G$. One can naturally define a map $\pi: \Gr_0^-\to \mathfrak{g}$. Achar--Henderson showed that $\pi( \Gr_0^-  \cap  \Gr_\lambda)$ is contained in $\mathcal{N}$ if and only if $\lambda$ is small in the sense of Broer \cite{Br} and Reeder \cite{Re}, i.e. $\lambda \nsucceq  2\gamma_0$, where $\gamma_0$ is the highest short coroot of $G$. They also proved that $\pi: \Gr_{\rm sm}\cap \Gr_0^-\to  \pi ( \Gr_{\rm sm}\cap \Gr_0^-   )$ is a finite map whose fibers admits  transitive $\mathbb{Z}/2\mathbb{Z}$-actions, where $ \Gr_{\rm sm}$ is the union of all $\Gr_\lambda$ such that $\lambda$ is small. Moreover, with respect to $\pi$,  Achar--Henderson \cite{AH,AHR} related the geometric Satake correspondence and Springer correspondence. 

In this paper, we consider a twisted analogue, and we will extend some results of Achar--Henderson in \cite{AH}. Let $\sigma$ be a diagram automorphism of order $2$, and let $\sigma$ act on the field $\mathcal{K}=\mathbb{C}((t))$ via $\sigma(t)=- t$ and $\sigma|_{\mathbb{C}}= {\rm Id}_{\mathbb{C}}$.  Then, we may define a twisted affine Grassmannian $\mathcal{G}r:=G(\mathcal{K})^\sigma/G(\mathcal{O})^\sigma$, where $\mathcal{O}=\mathbb{C}[[t]]$. Each twisted Schubert cell $\mathcal{G}r_{\bar{\lambda}}$, i.e. a $G(\mathcal{O})^\sigma$-orbit, is parametrized by the image $\bar{\lambda}$ of a dominant coweight $\lambda$ in the coinvariant lattice  $ X_*(T)_\sigma$ with respect to the induced action of $\sigma$, where $X_*(T)$ is the coweight lattice of $G$. In fact, $X_*(T)_\sigma$ can be regarded as the weight lattice of a reductive group $H:=(\check{G})^\sigma$, where $\check{G}$ is the Langlands dual group of $G$. 

 Let $\mathcal{G}r_0^-$ be the opposite open Schubert cell in $\mathcal{G}r$. We may naturally define a map $\pi: \mathcal{G}r_0^-\to  \mathfrak{p}$, where $\mathfrak{p}$ is the $(-1)$-eigenspace of $\sigma$ in $\mathfrak{g}$.  Let $\mathcal{M}_{\bar{\lambda}}$ denote the intersection $\mathcal{G}r_{\bar{\lambda}}\cap  \mathcal{G}r_0^- $, which is a nonempty open subset of $\mathcal{G}r_{\bar{\lambda}}$. 
 The following theorem is the main result of this paper, and it can follow from Proposition \ref{prop2.4} in Section \ref{subsection:Root datum} and Theorem \ref{theorem:cellandorbit} in Section \ref{section:Schubert_Nilpotent}, based on case-by-case analysis. 
\begin{theorem}
\label{mainthm}
Assume that $G$ is of type $A_{\ell} $ or $D_{\ell+1}$. The image $\pi( \mathcal{M}_\lambda )$ is contained in the nilpotent cone $\mathcal{N}_\mathfrak{p}$ of $\mathfrak{p}$, if and only if $\bar{\lambda}$ is a small dominant weight with respect to $H$. 
\end{theorem}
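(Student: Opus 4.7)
The plan is to follow the overall strategy of Achar--Henderson in the twisted setting, factoring the proof through the two ingredients cited in the excerpt. Proposition \ref{prop2.4} provides a dictionary between dominant cocharacters $\lambda$ of $G$ modulo the $\sigma$-action and dominant weights of $H = (\check{G})^\sigma$, together with a comparison of dominance orders that translates smallness of $\bar{\lambda}$ with respect to $H$ into explicit data on $\lambda$. Theorem \ref{theorem:cellandorbit} then identifies $\pi(\mathcal{M}_{\bar{\lambda}})$ geometrically with a specific $K$-invariant locus in $\mathfrak{p}$, where $K = G^\sigma$. Combining these two inputs, the theorem reduces to deciding which such loci lie in $\mathcal{N}_\mathfrak{p}$, a concrete combinatorial question about the small dominant weights of $H$.

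For the forward direction, I would use an opposite Iwahori-type factorization on $\mathcal{G}r_0^-$, writing each point of $\mathcal{M}_{\bar{\lambda}}$ as a $\sigma$-invariant product of negative loop-group coordinates with $t^{-k}$-powers bounded in terms of $\lambda$. The smallness of $\bar{\lambda}$ constrains these bounds enough that the resulting element of $\mathfrak{p}$ under $\pi$ has vanishing trace of every positive power in the relevant faithful representation, hence is nilpotent. This is a direct twisted translation of the Achar--Henderson estimate, adapted to be compatible with $G(\mathcal{O})^\sigma$.

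The main obstacle is the reverse implication. When $\bar{\lambda} \succeq 2\bar{\gamma}_0$ with $\bar{\gamma}_0$ the highest short coroot of $H$, one must exhibit an explicit point of $\mathcal{M}_{\bar{\lambda}}$ whose image under $\pi$ has a nonzero semisimple part. In the untwisted case the witness is built from the $\SL_2$-triple associated to $\gamma_0$; the twisted analogue must be $\sigma$-equivariant, and because $\sigma$ acts on $t$ by $t \mapsto -t$, the candidate mixes even- and odd-degree contributions that must be carefully balanced. Worse, one must then verify that the constructed witness sits in the precise orbit $\mathcal{G}r_{\bar{\lambda}}$ and not in a strictly smaller one -- a verification that depends sensitively on the type of $G$.

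Accordingly, the final step is a case-by-case verification. For type $A_\ell$, the involution realizes $\mathfrak{p}$ as a space of symmetric or skew-symmetric matrices (depending on the parity of $\ell$), and the $\sigma$-fixed loop-group representatives admit an explicit Laurent-matrix description on which both directions can be checked by linear algebra. For type $D_{\ell+1}$, $\mathfrak{p}$ can be described via a quadratic form on $\mathbb{C}^{2\ell+2}$, and the argument proceeds analogously. In each type I would match the list of small dominant weights of $H$ (whose root system is of classical type dual to the folded datum) against the list of $G(\mathcal{O})^\sigma$-orbits on which $\pi$ lands in $\mathcal{N}_\mathfrak{p}$; this matching, routine in principle but technically involved, is the content of the case-by-case analysis alluded to in the statement.
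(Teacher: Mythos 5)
Your high-level outline (split into a forward ``small $\Rightarrow$ nilpotent'' direction and a reverse direction built around a witness near $2\gamma_0$, with case-by-case linear algebra at the end) matches the paper, but you have misread what the two cited ingredients actually are, and that misreading propagates into a proof plan that is both harder and not what the paper does.

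Proposition~\ref{prop2.4} is not a ``dictionary'' between dominant cocharacters of $G$ and dominant weights of $H$; that translation is already part of the setup in Section~\ref{subsection:Root datum}. Proposition~\ref{prop2.4} \emph{is} the entire ``only if'' direction: it asserts that $\pi(\mathcal{G}r_{\bar{\lambda}}\cap\mathcal{G}r_0^-)\subseteq\mathcal{N}_{\mathfrak{p}}$ forces $\bar{\lambda}$ small, and its proof is a two-line closure argument. Because $\mathcal{G}r_0^-$ is open, the hypothesis upgrades to $\pi(\overline{\mathcal{G}r}_{\bar{\lambda}}\cap\mathcal{G}r_0^-)\subseteq\mathcal{N}_{\mathfrak{p}}$, so the one explicit non-nilpotent witness produced in Lemma~\ref{lemma:highest} inside $\mathcal{G}r_{2\gamma_0}\cap\mathcal{G}r_0^-$ rules out $2\gamma_0\preceq\bar{\lambda}$. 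You instead propose to manufacture a $\sigma$-equivariant $\mathrm{SL}_2$-witness inside \emph{every} $\mathcal{M}_{\bar{\lambda}}$ with $\bar{\lambda}\succeq 2\gamma_0$ and to verify, orbit by orbit, that it sits in $\mathcal{G}r_{\bar{\lambda}}$ exactly. That is substantially more than is needed: the paper needs only the single witness at $2\gamma_0$, and it pins down that one witness's orbit by an explicit Bruhat factorization of a $2\times 2$ matrix in $\mathrm{SL}_2(\mathcal{O}^-)$ pushed forward through the $\mathrm{SL}_2$-triple attached to a root $\theta_0$. (Also, $\gamma_0$ is the highest short \emph{root} of $H$, not coroot.)

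For the forward direction, the paper does not argue via vanishing traces of powers in a faithful representation. Theorem~\ref{theorem:cellandorbit} is proved through a twisted analogue of Achar--Henderson's rank lemma, Lemma~\ref{lemma:miracle}: writing a point of $\mathcal{G}r_{\bar{\lambda}}\cap\mathcal{G}r_0^-$ as $\sum_i x_i t^i$ in the standard representation, the pole order $N$ and the rank of $x_N$ are read off directly from the coordinates of $\bar{\lambda}$. For small $\bar{\lambda}$ this forces representatives of the truncated shape $I+xt^{-1}$ (type $A_{2\ell}$) or $I+xt^{-1}+yt^{-2}$ (types $A_{2\ell-1}$, $D_{\ell+1}$), and nilpotency is then extracted algebraically from the anti-involution $\iota(g(t))=g(-t)^{-1}$ (Lemma~\ref{lemma:iota}), which yields relations like $x^2=0$ or $x^2=y+y'$ directly. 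Your trace criterion is a conceivable alternative, but as written it is a sketch rather than an argument: you would still need the rank control from Lemma~\ref{lemma:miracle} (or something equivalent) to bound the Laurent expansion before any trace estimate becomes effective, and the harder $A_{2\ell-1}$ and $D_{\ell+1}$ cases (where the truncation has a $t^{-2}$ term and the fibers of $\pi$ are positive-dimensional) require the explicit orbit analysis of Theorems~\ref{theorem: isoA2l-1}, \ref{theorem:pi A2l-1} and \ref{theorem:piD}, which a pure trace argument would not supply.
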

If we replace the field $\mathbb{C}$ by an algebraically closed field $\mathrm{k}$ of positive characteristic $p$, this theorem still holds for $p$ with minor restrictions, see Theorem \ref{main_thm_positve_char}.

In Theorem \ref{theorem:cellandorbit}  we describe precisely $\pi( \mathcal{M}_{\bar{\lambda}} ) $ as a union of nilpotent orbits in $\mathfrak{p}$ for each small $\bar{\lambda}$.   In Theorem \ref{theorem: isoA2l}, Theorem \ref{theorem: isoA2l-1}, and Theorem \ref{theorem:piD}, we also determine all small $\bar{\lambda}$ such that $\pi( \mathcal{M}_{\bar{\lambda}})$  is a nilpotent orbit and $\pi:  \mathcal{M}_{\bar{\lambda}}  \to   \pi( \mathcal{M}_{\bar{\lambda}} ) $ is an isomorphism. Furthermore, we describe all fibers of $\pi: \mathcal{M}  \to \pi(\mathcal{M})$ in Proposition \ref{prop:fiber1} and Proposition \ref{prop:fiber2}, where $\mathcal{M}$ is the union of $\mathcal{M}_{\bar{\lambda}}$ for all small $\bar{\lambda}$. The fibers are closely related to anti-commuting nilpotent varieties for symmetric spaces.  When $G$ is of type $A_{2\ell-1}$ (resp.\,$D_{\ell+1}$),  the reduced fiber $\pi^{-1}(0)_{\rm red}$ is actually the minimal (resp.\,maximal) order 2 nilpotent variety in $\mathfrak{sp}_{2\ell}$ (resp.\,$\mathfrak{so}_{2\ell+1} $). This is a very different phenomenon from the untwisted setting in the work of Achar--Henderson \cite{AH}, and it actually makes the twisted setting more challenging. 

For general simple Lie algebra $\mathfrak{g}$ and general diagram automorphism $\sigma$,  it was proved in \cite[Appendix C]{HLR} by Haines-Louren\c{c}o-Richarz that, when $\bar{\lambda}$  is quasi-miniscule and $\overline{\mathcal{O}}$ is the minimal nilpotent variety in $\mathfrak{p}$, the map $\pi: \overline{\mathcal{G}r}_{\bar{\lambda}}\cap \mathcal{G}r_0^-\to  \overline{\mathcal{O} }$ is an isomorphism.  In fact, we have also obtained this result independently, cf.\,\cite{Ko}.   Also, under the same assumption as in Theorem \ref{mainthm}, this isomorphism is a special case of our Theorem \ref{theorem: isoA2l}, Theorem \ref{theorem: isoA2l-1}, and Theorem \ref{theorem:piD}. 

The geometric Satake correspondence for $\mathcal{G}r$ was proved by Zhu \cite{Zh}, and it exactly recovers the Tannakian group $H$. On the other hand, the Springer correspondence for symmetric spaces is more sophisticated than the usual Lie algebra setting, see a survey on this subject \cite{Sh}.  It would be interesting to relate these two pictures as was done in \cite{AH,AHR}. Y.\,Li \cite{Li} defined the symmetric space analogue called $\sigma$-quiver variety in the setting of Nakajima quiver variety, and he showed that certain $\sigma$-quiver variety can be identified with null-cone of symmetric spaces. It is an interesting question to investigate a connection between $\sigma$-quiver variety and twisted affine Grassmannian in the spirit of the work of Mirkovi\'c-Vybornov \cite{MV}.

From Theorem \ref{mainthm}, we can deduce  some applications for the order 2 nilpotent varieties in classical symmetric spaces. Let $\langle ,\rangle $ be a symmeric or symplectic non-degenerate bilinear form on a vector space $V$. Let $\mathcal{A}$ be the space of self-adjoint linear maps with respect to $\langle ,\rangle $. We consider ${\rm Sp}_{2n}$-action on $\mathcal{A}$ when $\langle ,\rangle $ is symplectic and $\dim V=2n$, and ${\rm SO}_n$-action when $\langle ,\rangle $ is symmetric and $\dim V=n$.
  In Section \ref{sect:applications}, we obtain the following results. 
\begin{theorem}
\label{them_application}
\begin{enumerate}
\item If $\langle ,\rangle$ is symmetric and $\dim V$ is odd, then any order  2 nilpotent variety in $\mathcal{A}$ is normal.
\item If $\langle ,\rangle $ is symplectic, then there is a bijection of order 2 nilpotent varieties in $\mathfrak{so}_{2n+1}$ and in $\mathcal{A}$, such that they have the same cohomology of stalks of  IC-sheaves.  
\item  If $\langle ,\rangle $ is symplectic,  the smooth locus of any order 2 nilpotent variety in $\mathcal{A}$ is the open nilpotent orbit.
\end{enumerate}
\end{theorem}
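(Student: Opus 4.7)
The plan is to leverage the isomorphisms $\pi: \overline{\mathcal{M}_{\bar\lambda}} \xrightarrow{\sim} \overline{\mathcal{O}}$ established by Theorem \ref{mainthm} and its refinements (Theorems \ref{theorem: isoA2l}, \ref{theorem: isoA2l-1}, and \ref{theorem:piD}) in order to transfer each geometric question about order 2 nilpotent varieties to an analogous question about twisted (or, on the $\mathfrak{so}_{2n+1}$ side, untwisted) affine Schubert varieties, which can then be settled using known structural results and geometric Satake.

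For Part (1), the case $\dim V$ odd corresponds to $G$ of type $A_{2\ell}$, with symmetric pair $(\SL_{2\ell+1},\SO_{2\ell+1})$ and $\mathfrak{p}$ equal to the traceless symmetric matrices. Since every nilpotent matrix has trace zero, one has $\mathcal{A}=\mathfrak{p}\oplus\mathbb{C}\cdot I$, and the order 2 nilpotent varieties in $\mathcal{A}$ coincide with those in $\mathfrak{p}$. By the refined form of Theorem \ref{mainthm}, each such orbit closure $\overline{\mathcal{O}}$ is isomorphic via $\pi$ to $\overline{\mathcal{M}_{\bar\lambda}}=\overline{\mathcal{G}r_{\bar\lambda}}\cap\mathcal{G}r_0^-$, which is an open subvariety of the twisted affine Schubert variety $\overline{\mathcal{G}r_{\bar\lambda}}$. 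Twisted affine Schubert varieties in characteristic zero are normal (Pappas--Rapoport and Zhu), and open subvarieties of normal varieties are normal, so $\overline{\mathcal{O}}$ is normal.

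For Part (2), apply Theorem \ref{mainthm} in type $A_{2n-1}$ (symmetric pair $(\SL_{2n},\Sp_{2n})$) to identify each order 2 nilpotent orbit closure in $\mathcal{A}$ with a closure $\overline{\mathcal{M}_{\bar\lambda}}$ in the twisted affine Grassmannian, parametrized by a small dominant weight $\bar\lambda$ of the dual group $H$. On the other side, Achar--Henderson identifies each order 2 nilpotent orbit closure in $\mathfrak{so}_{2n+1}$ with a closure $\overline{\Gr_\lambda\cap\Gr_0^-}$ in the untwisted affine Grassmannian of $\SO_{2n+1}$, parametrized by a small dominant weight of $\Sp_{2n}$. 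Matching the two parameter sets via the combinatorics of partitions $(2^a,1^b)$ (with the appropriate parity constraints on each side) produces the desired bijection. For the equality of IC stalk cohomology, invoke Zhu's twisted geometric Satake on one side and the classical Mirkovi\'c--Vilonen correspondence on the other: both stalk cohomologies then compute weight multiplicities in the irreducible representations of the common dual group attached to the matched small weights, and hence coincide.

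For Part (3), the isomorphism $\overline{\mathcal{M}_{\bar\lambda}}\simeq\overline{\mathcal{O}}$ reduces the smooth locus question to analysis in $\mathfrak{p}$. Singularity at the origin follows from the $\mathbb{C}^{*}$-contracting action: $\overline{\mathcal{O}}$ is a cone, so it equals its own tangent cone at $0$; smoothness at $0$ would force $\overline{\mathcal{O}}$ to be a linear subspace, and irreducibility of $\mathfrak{p}$ as an $\Sp_{2n}$-representation then forces $\overline{\mathcal{O}}=0$ or $\mathfrak{p}$, contradicting $\mathcal{O}$ being a nontrivial proper nilpotent orbit. To extend singularity along each smaller orbit $\mathcal{O}'\subsetneq\overline{\mathcal{O}}$, one applies the same cone argument to a transversal slice at a point of $\mathcal{O}'$ and checks that the slice is not a linear subspace; alternatively, one reads off the smooth locus from the stratification of $\overline{\mathcal{M}_{\bar\lambda}}$ by the twisted Schubert cells $\mathcal{M}_{\bar\mu}$ with $\bar\mu\leq\bar\lambda$. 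The main obstacle is exactly this extension step: the $\mathbb{C}^{*}$-argument cleanly handles only the origin, and ruling out smoothness along every lower stratum demands a case-by-case Kraft--Procesi-style analysis of transversal slices (or, equivalently, detailed control over the singular locus of the relevant twisted affine Schubert varieties), which is the most technically delicate portion of the argument.
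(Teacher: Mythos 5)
Your treatments of Parts (1) and (2) are on the right track and follow essentially the same route as the paper, while Part (3) has a genuine gap.

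For Part (1), your argument is exactly the paper's proof of Theorem \ref{thm_normal}: the isomorphisms of Theorem \ref{theorem: isoA2l} (and \ref{theorem: isoA2l-1}) realize each $\overline{\mathcal{O}}$ as an open subset of a twisted affine Schubert variety $\overline{\mathcal{G}r}_{\bar\lambda}$, and normality of the Schubert variety is inherited by open subsets. For Part (2), you transfer both families of orbit closures into affine Schubert varieties (twisted on the $\mathcal{A}$ side via Theorem \ref{theorem: isoA2l-1}, untwisted on the $\mathfrak{so}_{2n+1}$ side via Achar--Henderson) and invoke geometric Satake. The paper's primary argument instead observes that the twisted affine Grassmannian of $\mathrm{SL}_{2n}$ and the untwisted affine Grassmannian of $\mathrm{Spin}_{2n+1}$ share the same underlying affine Weyl group, so, after parity vanishing, both families of IC stalks are given by the same Kazhdan--Lusztig polynomials $P_{\omega_i,\omega_j}$ \cite{KL}; your geometric Satake formulation corresponds to what the paper offers only as an alternative (via the Brylinski--Kostant jump polynomial, \cite{Bry,Zh}). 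One caveat in your phrasing: the graded stalk dimensions match the \emph{$q$-analogues} of weight multiplicities, not the plain weight multiplicities; it is those polynomials that must be compared, not their values at $q=1$.

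Part (3) is incomplete, and you flag the gap yourself. The $\mathbb{C}^{\times}$-cone argument rules out smoothness only at the origin, and you correctly note that extending this to the intermediate strata would require transversal slices or ``detailed control over the singular locus of the relevant twisted affine Schubert varieties.'' That control is exactly what is available and what the paper's proof of Theorem \ref{thm_locus} uses: by \cite[Theorem 1.2]{BH}, the smooth locus of any twisted affine Schubert variety $\overline{\mathcal{G}r}_{\bar\lambda}$ in the twisted affine Grassmannian of $\mathrm{SL}_{2n}$ is precisely the open cell $\mathcal{G}r_{\bar\lambda}$. Combined with the open embedding of Theorem \ref{theorem: isoA2l-1}, this gives the smooth locus of $\overline{\mathcal{O}}$ in one stroke, with no stratum-by-stratum Kraft--Procesi analysis. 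Without this citation or an equivalent input, your argument for Part (3) does not close.
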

It is known that when $\langle, \rangle$ is symplectic, any nilpotent variety in $\mathcal{A}$ is normal, but it is not always true when $\langle ,\rangle$ is symmetric, cf.\,\cite{Oh}. 
Using our methods, we can also prove that there is a bijection of order 2 nilpotent varieties in $\mathfrak{sp}_{2n}$ and in the space of symmetric $(2n+1)\times (2n+1)$ matrices, such that they have the same cohomlogy of stalks of  IC-sheaves. This was already proved earlier by Chen-Vilonen-Xue \cite{CVX} using different methods.  Also, Part 3) of Theorem \ref{them_application} is not true when $\langle ,\rangle$ is symmetric and $\dim V$ is odd, see more detailed discussions in Section  \ref{sect:applications}.
 \vspace{1em}

\noindent {\bf Acknowledgments}:  
This project grew out from a conversation with Yiqiang Li in March 2019. We would like to thank him for inspiring discussions. We also would like to thank the anonymous referee for very careful reading and many helpful comments and suggestions. 
J.\,Hong is partially supported by NSF grant DMS-2001365.

\section{Notation and Preliminaries}
\subsection{Root datum}
\label{subsection:Root datum}
	Let $G$ be a simply-connected simple algebraic group over $\mathbb{C}$, and let $\mathfrak{g}$ be its Lie algebra.  Let $\sigma$ be a diagram automorphism of $G$ of order $r$, preserving a maximal torus $T$ and a Borel subgroup $B$ containing $T$ in $G$.  Then $G$ has a root datum $(X_*(T),X^* (T), \langle\cdot ,\cdot \rangle,  \check{\alpha}_i,\alpha_i,i\in I)$ with the action of $\sigma$, where 
	\begin{itemize}
	\item $X_*(T)$ (resp. $X^*(T)$ ) is the coweight (resp. weight) lattice;
	\item $I$ is the set of vertices of the Dynkin diagram of $G$;
	\item $\alpha_i$ (resp. $\check{\alpha}_i $) is the simple root (resp. coroot) for each $i\in I$;
	\item $\langle\cdot ,\cdot \rangle: X_*(T)\times X^*(T)\to \mathbb{Z}$ is the perfect pairing. 
	\end{itemize}
	The automorphism $\sigma$ of this root datum satisfies
	\begin{itemize}
        \item $\sigma(\alpha_i)=\alpha_{\sigma(i)}$ and $\sigma(\check{\alpha}_i)=\check{\alpha}_{\sigma(i)}$;
		\item $\langle \sigma(\check{\lambda}),\sigma(\mu) \rangle=\langle \check{\lambda}, \mu\rangle$ for any $\check{\lambda}\in X_*(T)$ and $\mu\in X^* (T)$.
	\end{itemize} 
	As a diagram autormophism on $G$, $\sigma$ also preserves a pinning with respect to $B$ and $T$, i.e. there exists root subgroups $x_i, y_i$ associated to $\alpha_i, -\alpha_i$ for each $i\in $, such that
	\[  \sigma(x_i(a))=x_{\sigma(i)}(a), \quad  \sigma(y_i(a))=y_{\sigma(i)}(a),\quad \text{ for any } a\in \mathbb{C} .  \]
	


	
	Let $I_\sigma$ be the set of $\sigma$-orbits in $I$. Denote $X^*(T)^{\sigma}=\{\lambda\in X^*(T)\mid \sigma \lambda=\lambda\}$ and $X_* (T)_{\sigma}=X_*(T)/(\mathrm{Id} -\sigma)X_*(T)$. For each $\imath\in I_\sigma$, define $\gamma_\imath=\bar{\check{\alpha}}_i\in X_{*} (T)_{\sigma}$ for any $i\in \imath$, and define $\check{\gamma}_\imath\in X^{*}(T)^{\sigma}$ by
	\[ \check{\gamma}_\imath =
	\begin{cases}
	\sum_{i\in \imath}\alpha_i & \quad \text{if no pairs in } \imath \text{ is adjacent, }\\
	2\sum_{i\in \imath}\alpha_i & \quad  \text{if } \imath=\{i,\sigma(i)\} \text{ and } i \text{ and } \sigma(i)\text{ are adjacent,}\\
	\alpha_i  & \quad \text{if } \imath=\{i\}.
	\end{cases}
	\]
Let $\check{G}$ denote the Langlands dual group of $G$, and we still denote the induced diagram automorphism on $\check{G}$ by $\sigma$. 	Denoted by $H=(\check{G})^\sigma$ the $\sigma$-fixed subgroup of $\check{G}$. Then, $H$  has the root datum $(X^{*}(T)^{\sigma}, X_{*} (T)_{\sigma},\check{\gamma_\imath},\gamma_\imath,\imath\in I_\sigma)$, cf.\,\cite[Section 2.2]{HS}. For $\bar{\lambda},\bar{\mu}\in X_{*} (T)_{\sigma}$, define the partial order $\bar{\mu} \preceq \bar{\lambda}   $ if $\bar{\lambda}-\bar{\mu}$ is a sum of positive roots of $H$. Let $X_{*} (T)_{\sigma}^+$ be the set of dominant weight of $H$. In fact, $X_{*} (T)_{\sigma}^+$ is the image of the quotient map $X_{*} (T)^+ \to X_{*} (T)_{\sigma}$, where $X_{*} (T)^+$ is the set of dominant weights of $G$.

\subsection{Twisted affine Grassmannian}	
Let $\sigma$ be a diagram automorphism of $G$ of order $r$. Let $\mathcal{O}$ denote the set of formal power series in $t$ with coefficients in $\mathbb{C}$ and denote $\mathcal{K}$ the set of Laurent series in $t$ with coefficients in $\mathbb{C}$. Denote the automorphism $\sigma$ of order $r$ on $\mathcal{K}$ and $\mathcal{O}$ given by $\sigma$ acts trivially on $\mathbb{C}$ and maps $t\to \epsilon t$ where we fix the primitive $r$-root of unity $\epsilon$. 		We  consider the following {\it twisted affine Grassmannian }  attached to $G$ and $\sigma$,
	$$\mathcal{G}r_G=G(\mathcal{K})^{\sigma}/G(\mathcal{O})^{\sigma}.$$
This space has been studied intensively in \cite{BH,HR,PR,Ri}. The ramified geometric Satake correspondence \cite{Zh} asserts that there is an equivalence between the category of spherical perverse sheaves on $\mathcal{G}r_G$ and the category of representations of the algebraic group $H=(\check{G})^\sigma$. 	If there is no confusion, we write $\mathcal{G}r$ for convenience.

Let $e_0$ be the based point in $\mathcal{G}r$. For any $\lambda\in X_{*} (T)$,  we attach an element $t^{\lambda}\in T(\mathcal{K})$ naturally and define the norm $n^{\lambda}\in T(\mathcal{K})^\sigma$ of $t^{\lambda}$ by
\begin{equation}\label{equation:norm}
	n^{\lambda} :=\prod_{i=0}^{r-1}\sigma^i (t^{\lambda})=\epsilon^{\sum_{i=1}^{r-1}i \sigma^i(\lambda)}t^{\sum \sigma^i \lambda}.
\end{equation}
This construction originally occurred in \cite[Section 7.3]{Kot}. Let $\bar{\lambda}$ be the image of $\lambda$ in $X_*(T)_\sigma$.  Set $e_{\bar{\lambda}}=n^{\lambda}\cdot e_0\in \mathcal{G}r$.  Then  $e_{\bar{\lambda}}$ only depends on $\bar{\lambda}$. 
	Following \cite{BH,Zh}, $\mathcal{G}r$ admits the following Cartan decomposition
\begin{equation}\label{equation:Cartan}
	\mathcal{G}r=\bigsqcup_{\bar{\lambda}\in X_{*} (T)_{\sigma}^+}\mathcal{G}r_{\bar{\lambda}}
\end{equation}
	where $\mathcal{G}r_{\bar{\lambda}}=G(\mathcal{O})^\sigma\cdot e_{\bar{\lambda}}$ is a Schubert cell. Let $ \overline{  \mathcal{G}r}_{\bar{\lambda}}$ be the closure of $ \mathcal{G}r_{\bar{\lambda}}$.
	Then 
	\[  \overline{  \mathcal{G}r}_{\bar{\lambda}} =  \bigsqcup_{\bar{\mu} \preceq    \bar{\lambda} }\mathcal{G}r_{\bar{\mu}} ,\]
	and 
	$\dim \overline{\mathcal{G}r}_{\bar{\lambda}}=\langle2\rho ,\bar{\lambda} \rangle, $
	where $\rho$ is the half sum of all positive coroots of $H$. 

	 By abuse of notation,  we still use $\sigma$ to denote the induced automorphism on $\mathfrak{g}$ of order $r$. Then there is a grading on $\mathfrak{g}$,
	 $$\mathfrak{g}=\mathfrak{g} _0\oplus \mathfrak{g}_1 \oplus \cdots \oplus \mathfrak{g}_{r-1}$$
	 where $\mathfrak{g}_i$ is the $\epsilon^i$-eigenspace.   Set
	  \[ \mathfrak{p}= \mathfrak{g}_1.\] 
	 
	  Set $\mathcal{O}^-=\mathbb{C}[t^{-1}]$.  Consider the evaluation map ${\rm ev}_\infty: G(\mathcal{O}^-)\to G$. Let $G(\mathcal{O}^-)_0$ denote its kernel.	 The map ${\rm ev}_\infty$  factors through $G(\mathbb{C}[t^{-1}]/(t^{-2}))\to G$. 
	 Note that the kernel of $G(\mathbb{C}[t^{-1}]/(t^{-2}))\to G$ is canonically identified with the vector space $\mathfrak{g} \otimes t^{-1}$ with respect to the adjoint action of $G$ and $\sigma$.  It induces a $G\rtimes \langle \sigma \rangle$-equivariant map 
	 \[ G(\mathcal{O}^-)_0\to \mathfrak{g}\otimes t^{-1}. \]
	 Taking $\sigma$-invariants, we get a $K$-equivariant map 	 \begin{equation}  
	 \label{equation:invariant}
	  G(\mathcal{O}^-)_0^\sigma\to \mathfrak{p}  ,\end{equation}
	where $K:= G^\sigma$. Note that $K$ is a connected simply-connected simple algebraic group, as $G$ is simply-connected.

	Set  $ \mathcal{G}r_0^- := G(\mathcal{O}^-)^\sigma\cdot e_0\simeq G(\mathcal{O}^-)_0^\sigma $.
	Then   $\mathcal{G}r_0^-$ is the open opposite Schubert cell in $\mathcal{G}r$.  From (\ref{equation:invariant}), we have the following $K$-equivariant map
	 \begin{equation}  
	 \label{equation:pi}
	\pi:  \mathcal{G}r_0^-\to \mathfrak{p}  . \end{equation}
	\begin{lemma}
	$\mathcal{G}r_{\bar{\lambda}} \cap\mathcal{G}r_0^-  $ is nonempty for any $\lambda\in X_{*} (T)_{\sigma}^+$.
\end{lemma}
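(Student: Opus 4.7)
The plan is to exploit the openness of $\mathcal{G}r_0^-$ in $\mathcal{G}r$ together with the irreducibility of the Schubert variety $\overline{\mathcal{G}r}_{\bar{\lambda}}$: it will suffice to exhibit a single point of $\overline{\mathcal{G}r}_{\bar{\lambda}}$ lying in $\mathcal{G}r_0^-$, and the natural candidate is the base point $e_0$ itself. Indeed, $e_0 \in \mathcal{G}r_0^-$ trivially since $\mathcal{G}r_0^- = G(\mathcal{O}^-)^\sigma \cdot e_0$, so everything hinges on showing $e_0 \in \overline{\mathcal{G}r}_{\bar{\lambda}}$.

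By the closure formula $\overline{\mathcal{G}r}_{\bar{\lambda}} = \bigsqcup_{\bar{\mu} \preceq \bar{\lambda}} \mathcal{G}r_{\bar{\mu}}$, the condition $e_0 \in \overline{\mathcal{G}r}_{\bar{\lambda}}$ is equivalent to $\bar{0} \preceq \bar{\lambda}$, i.e.\ that $\bar{\lambda}$ is a $\mathbb{Z}_{\geq 0}$-combination of the simple roots $\gamma_\imath$ of $H$. I would verify this by invoking the simply-connectedness of $G$, which forces $X_*(T) = Q^\vee_G$. Thus any dominant representative $\lambda \in X_*(T)^+$ of $\bar{\lambda}$ lies in the coroot lattice of $G$; being dominant, it must then be a $\mathbb{Z}_{\geq 0}$-combination of simple coroots $\check{\alpha}_i$ (standard, following from the non-negativity of the entries of the inverse Cartan matrix of a simple finite-type Lie algebra). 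Reducing modulo $(\mathrm{Id} - \sigma)X_*(T)$, $\bar{\lambda}$ becomes a $\mathbb{Z}_{\geq 0}$-combination of the images $\gamma_\imath = \bar{\check{\alpha}}_i$, and hence $\bar{0} \preceq \bar{\lambda}$.

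With $e_0 \in \overline{\mathcal{G}r}_{\bar{\lambda}} \cap \mathcal{G}r_0^-$ in hand, this intersection is a nonempty open subset of the irreducible projective variety $\overline{\mathcal{G}r}_{\bar{\lambda}}$. Since $\mathcal{G}r_{\bar{\lambda}}$ is itself open and dense in $\overline{\mathcal{G}r}_{\bar{\lambda}}$, two nonempty open subsets of an irreducible space must meet, and therefore $\mathcal{G}r_{\bar{\lambda}} \cap \mathcal{G}r_0^- \neq \emptyset$. There is no real obstacle: the only delicate point is the implicit use of simply-connectedness to guarantee $\bar{0} \preceq \bar{\lambda}$ for \emph{every} dominant $\bar{\lambda}$; without this assumption, $X_*(T)_\sigma$ could strictly contain the root lattice of $H$ and some dominant weights of $H$ would fail to lie in the positive root cone, requiring a different argument.
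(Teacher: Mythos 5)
Your proof is correct, and it takes a genuinely different route from the paper's. The paper argues at the level of Iwahori double cosets: it reduces the question to showing $In^\lambda I \cap I^- \neq \emptyset$, lifts to the Kac--Moody group $\mathcal{G}$ associated to the twisted loop group, and then deduces $\mathcal{I}w\mathcal{I} \cap \mathcal{I}^- \neq \emptyset$ by induction on length using the BN-pair relation $\mathcal{I}y\mathcal{I}s\mathcal{I} = \mathcal{I}ys\mathcal{I}$ when $\ell(ys)=\ell(y)+1$. Your approach instead works purely at the level of the Cartan stratification: it observes that $e_0 \in \mathcal{G}r_0^-$ always, shows $e_0 \in \overline{\mathcal{G}r}_{\bar{\lambda}}$ using simple-connectedness of $G$ (so that any dominant representative $\lambda$ lies in the coroot lattice, hence is a $\mathbb{Z}_{\geq 0}$-combination of simple coroots by non-negativity of the inverse Cartan matrix, which projects to $\bar{0} \preceq \bar{\lambda}$), and then concludes via irreducibility of $\overline{\mathcal{G}r}_{\bar{\lambda}}$ that the two nonempty opens $\overline{\mathcal{G}r}_{\bar{\lambda}} \cap \mathcal{G}r_0^-$ and $\mathcal{G}r_{\bar{\lambda}}$ must meet. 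Your argument is more elementary and avoids the Kac--Moody machinery entirely, at the cost of making essential use of the hypothesis that $G$ is simply-connected (which you correctly flag: without it, some dominant $\bar{\lambda}$ would not satisfy $\bar{0}\preceq\bar{\lambda}$ and $e_0$ might not lie in $\overline{\mathcal{G}r}_{\bar{\lambda}}$). The paper's Iwahori argument is more robust in that it shows $Iw I \cap I^- \neq \emptyset$ for \emph{arbitrary} Weyl group elements, a statement that would survive weakening the simple-connectedness hypothesis; but since the paper does assume $G$ simply-connected throughout, both proofs are valid in context.
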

\begin{proof}
First note that $\mathcal{G}r_0^-$ is an open subset in $\mathcal{G}r$, cf.\,\cite{BH}[Proof of Theorem 4.2].  Moreover, $\mathcal{G}r_0^-\cap  \overline{\mathcal{G}r}_{\overline{\lambda}}$ contains the base point $e_{0}$. Thus, the intersection $\mathcal{G}r_0^-\cap  \overline{\mathcal{G}r}_{\overline{\lambda}}$ is a nonempty open subset of $ \overline{\mathcal{G}r}_{\overline{\lambda}}$. Hence $\mathcal{G}r_{\bar{\lambda}} \cap\mathcal{G}r_0^-  $  is also nonempty.


\end{proof}
	 

	Following \cite{Br,Re, AH},   an element $\bar{\lambda}$ of $X_* (T)_{\sigma}^+$ is called {\it small},  if $\bar{\lambda}\nsucceq 2\gamma_0$, where $\gamma_0$ is the highest short root of $H$. The set of all small dominant weights is a lower order ideal of $X_{*} (T)_{\sigma}^+$, i.e., if $\bar{\mu}\preceq\bar{\lambda}$ and $\bar{\lambda}$ is small, then $\bar{\mu}$ is also small. Let $\mathcal{G}r_{\mathrm{sm}}$ be the union of $\mathcal{G}r_{\bar{\lambda}}$ for small dominant weights $\bar{\lambda}$. 
	 Set
	 $$\mathcal{M}=\mathcal{G}r_{\mathrm{sm}}\cap \mathcal{G}r_0^-.$$
	 For each small dominant weight $\bar{\lambda}$, set
	 $$\mathcal{M}_{\bar{\lambda}}=\mathcal{G}r_{\bar{\lambda}}\cap \mathcal{G}r_0^-.$$
Let  $\mathcal{N}_{\mathfrak{p}}$ denote the nilpotent cone of $\mathfrak{p}$. 
We shall prove in Section  \ref{section:Schubert_Nilpotent} that $\pi(\mathcal{M})$ is contained $\mathcal{N}_{\mathfrak{p}}$, when $G$ is of type $A_n$ and $D_{n}$ and $\sigma$ is of order $2$.

Recall that $\gamma_0$ is the highest short root of $H$. The following lemma is a twisted analogue of \cite[Lemma 3.3]{AH}. 
\begin{lemma}\label{lemma:highest}
	If $\sigma$ is a diagram automorphism of order $r$, then $\pi(\mathcal{G}r_{2\gamma_0}\cap \mathcal{G}r_0^-)\nsubseteq \mathcal{N}_{\mathfrak{g}_1}$.
\end{lemma}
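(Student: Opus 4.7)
The strategy is to adapt the $\SL_2$-calculation of Achar--Henderson \cite[Lemma 3.3]{AH} to the twisted setting. In the untwisted case, for any root $\alpha$ of $G$ the element
\[ g_\alpha \;:=\; y_\alpha(t^{-1})\,x_\alpha(-t^{-1})\,y_\alpha(t^{-1}) \]
is readily checked (by a direct matrix computation in $\SL_2$) to lie in $G(\mathcal{O}^-)_0$, with class $g_\alpha\cdot e_0 \in \Gr_{2\check{\alpha}}$ in the untwisted affine Grassmannian and with $t^{-1}$-coefficient $2f_\alpha-e_\alpha$, which is regular semisimple in the relevant $\mathfrak{sl}_2$-triple and in particular not nilpotent.

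The plan is to pick, via Proposition \ref{prop2.4} and the description of the root datum of $H$ in Section \ref{subsection:Root datum}, a root $\alpha$ of $G$ such that $\bar{\check{\alpha}}=\gamma_0$ in $X_*(T)_\sigma$, and then to $\sigma$-symmetrise $g_\alpha$. In the case where the orbit $\{\sigma^j(\alpha)\}_{j=0}^{r-1}$ consists of pairwise orthogonal roots (which covers every case except type $A_{2\ell}$ with $\sigma$ of order $2$), the translates $\sigma^j(g_\alpha)$ lie in commuting $\SL_2$-copies inside $G$, and the product
\[ g \;:=\; \prod_{j=0}^{r-1}\sigma^j(g_\alpha) \]
is $\sigma$-invariant and lies in $G(\mathcal{O}^-)_0^\sigma$. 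Applying the Bruhat decomposition factor by factor gives $g \in G(\mathcal{O})\cdot t^{\,2\sum_j \sigma^j(\check{\alpha})}\cdot G(\mathcal{O})$; combined with the norm formula \eqref{equation:norm} and the fact that $T(\mathbb{C})\subset G(\mathcal{O})^\sigma$, this locates $g\cdot e_0$ in $\mathcal{G}r_{2\gamma_0}$ via \eqref{equation:Cartan}. The $t^{-1}$-coefficient of $g$ is $\sum_j \epsilon^{-j}\sigma^j(2f_\alpha-e_\alpha)$, which lies in $\mathfrak{p}$; since the summands are supported on distinct orthogonal $\sigma^j(\alpha)$-root subsystems and each is non-nilpotent in its own $\mathfrak{sl}_2$-copy, $\pi(g)$ is not nilpotent, which is what we want.

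The main obstacle is type $A_{2\ell}$ with $\sigma$ of order $2$, where $\sigma$ interchanges the two adjacent central simple roots $\alpha_\ell$ and $\alpha_{\ell+1}$; then $g_\alpha$ and $\sigma(g_\alpha)$ no longer commute, the naive symmetrisation fails to be $\sigma$-invariant, and the Cartan-cell computation is less transparent. To handle this case one works inside the $\SL_3$-subgroup generated by the exchanged pair and constructs a $\sigma$-invariant element of $\SL_3(\mathcal{O}^-)_0$ by an explicit rank-$2$ calculation, after which the same eigenvalue argument shows its image under $\pi$ is non-nilpotent and its Cartan class is $2\gamma_0$.
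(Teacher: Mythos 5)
Your overall strategy coincides with the paper's: reduce to an explicit $\SL_2$-computation, $\sigma$-symmetrise over the orbit of a root $\alpha$ of $G$ whose coroot class in $X_*(T)_\sigma$ is $\gamma_0$, and treat $(A_{2\ell},2)$ separately because there the relevant coroot class is $2\gamma_0$ and the root is $\sigma$-fixed. The paper implements the symmetrisation via the product homomorphism $\phi=\prod_i \phi_{\sigma^i(\theta_0)}:\mathcal{S}\to G$ and the isomorphism $\varphi:\SL_2(\mathcal{K})\xrightarrow{\sim}\mathcal{S}(\mathcal{K})^\sigma$, which is the same idea as your $\prod_j\sigma^j(g_\alpha)$ in the commuting case; for $(A_{2\ell},2)$ it uses the $\SL_2$ of the $\sigma$-fixed root $\theta_0$ with the induced involution on $\SL_2(\mathcal{K})$, rather than the $\SL_3$ you suggest, but either subgroup would serve.

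However, there is a concrete error in your basic $\SL_2$ ingredient. Computing directly,
\[
g_\alpha \;=\; y_\alpha(t^{-1})\,x_\alpha(-t^{-1})\,y_\alpha(t^{-1})
\;=\;
\begin{pmatrix} 1-t^{-2} & -t^{-1}\\ 2t^{-1}-t^{-3} & 1-t^{-2}\end{pmatrix},
\]
which does lie in $\SL_2(\mathcal{O}^-)_0$ and has $t^{-1}$-coefficient $2f_\alpha-e_\alpha$ as you say. But the lowest-order entry is $-t^{-3}$, so by part (1) of Lemma~\ref{lemma:miracleAH} the $\SL_2(\mathcal{O})$-double coset of $g_\alpha$ is $t^{(3,-3)}$, i.e.\ $g_\alpha\cdot e_0\in\Gr_{3\check\alpha}$, not $\Gr_{2\check\alpha}$ (one can also read this off the Smith form: $t^3g_\alpha$ has unit gcd and determinant $t^6$). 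Since the whole point of the lemma is to produce an element of $\mathcal{G}r_{2\gamma_0}\cap\mathcal{G}r_0^-$ with non-nilpotent image (not some bigger cell), the symmetrised $g$ built from this $g_\alpha$ lands in the wrong Schubert cell and the argument does not close. You need a matrix in $\SL_2(\mathcal{O}^-)_0$ whose entries bottom out at $t^{-2}$; the paper's choice
\[
g(t)=\begin{pmatrix}1+t^{-1}&t^{-2}\\ t^{-1}&1-t^{-1}+t^{-2}\end{pmatrix}
=\begin{pmatrix}0&1\\-1&t^2-t+1\end{pmatrix}\begin{pmatrix}t^2&0\\0&t^{-2}\end{pmatrix}\begin{pmatrix}1&0\\ t^2+t&1\end{pmatrix}
\]
does this, with $t^{-1}$-coefficient $\begin{pmatrix}1&0\\1&-1\end{pmatrix}=h_\alpha+f_\alpha$, which is regular semisimple. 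Once you replace $g_\alpha$ by such a matrix (and similarly in the $(A_{2\ell},2)$ case), the rest of your outline goes through.
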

\begin{proof}
Let $X_N$ be the Dynkin diagram of $G$. Following \cite[p.128-129]{Ka}, we choose the following root of $G$,
\[ {\theta}_0=\begin{cases}       
 {\alpha}_1+\cdots + {\alpha}_{2\ell-2}, &\quad   (X_N, r)=(A_{2\ell-1}, 2);\\
 {\alpha}_1+\cdots + {\alpha}_{2\ell},  &\quad   (X_N, r)=(A_{2\ell}, 2);\\
 {\alpha}_1+\cdots + {\alpha}_{\ell},  &\quad   (X_N, r)=(D_{\ell+1}, 2);\\
 {\alpha}_1+{\alpha}_2+{\alpha}_3,  &\quad   (X_N, r)=(D_{4}, 3);\\
 {\alpha}_1+2{\alpha}_2+2{\alpha}_3+{\alpha}_4+{\alpha}_5+{\alpha}_6,  &\quad   (X_N, r)=(E_{6}, 2).
\end{cases} \]
where the label of simple roots ${\alpha}_i$ follows from \cite[TABLE Fin, p.53]{Ka}. Recall from the section 2.1 that for each $\iota\in I_\sigma$, we define simple roots of $H$, $\gamma_\imath=\bar{\check{\alpha}}_i\in X_{*} (T)_{\sigma}$. 

Let $\check{\theta}_0$ be the coroot of ${\theta}_0$. Then,
\[ \bar{\check{\theta}}_0=\begin{cases}
\gamma_0    &\quad   \text{if } (X_N,r)\not= (A_{2\ell}, 2)\\
2\gamma_0  &\quad    \text{if } (X_N,r)= (A_{2\ell}, 2)
\end{cases} .  \]

	Suppose $(X_N,r)\not= (A_{2\ell}, 2)$.  Note that $\theta_0\in X^*(T)$ and $\check{\theta}_0:\mathbb{C}^\times \to T$. Each $a\in\mathbb{C}^\times$ can be identified with $\begin{pmatrix}
	a & 0\\
	0 & a^{-1}
	\end{pmatrix}\in \mathrm{SL}_2.$ For each $i=0,...,r-1$, define a homomorphism $\phi_{\sigma^i(\theta_0)}:\mathrm{SL}_2 \to G$ given by
	\[\begin{pmatrix}
	1 & a\\
	0 & 1
	\end{pmatrix}\mapsto x_{\sigma^i(\theta_0)}(a), \hspace{0.5 cm}
	\begin{pmatrix}
	1 & 0\\
	a & 1
	\end{pmatrix}\mapsto y_{\sigma^i(\theta_0)}(a), \hspace{0.5 cm}
	\begin{pmatrix}
	a & 0\\
	0 & a^{-1}
	\end{pmatrix}\mapsto \sigma^i(\check{\theta}_0)(a).
	\]
	Let $\mathcal{S}$ be the product of $r$ copies of $\mathrm{SL}_2$. Then $\check{\theta}_0$ can be extended to $\phi:\mathcal{S} \to G$ given by
	$$\phi(g_0,...,g_{r-1})=\prod_{i=0}^{r-1}\phi_{\sigma^i(\theta_0)}(g_i).$$
	This $\phi$ can extend scalar to $\mathcal{K}$.
	Abusing notation, define $\sigma:\prod_{i=1}^{r}(\SL_2(\mathbb{\mathcal{K}}))_i \to \prod_{i=1}^{r}(\SL_2(\mathbb{\mathcal{K}}))_i$ by
	$$\sigma(g_1(t),g_2(t),...,g_r(t))=(g_r(\epsilon t),g_1(\epsilon t),...,g_{r-1}(\epsilon t)).$$ 
	There exists an isomorphism 
	\[  \varphi:\mathrm{SL}_2(\mathcal{K})\to(\prod_{i=1}^{r}(\SL_2(\mathbb{\mathcal{K}}))_i)^\sigma=\{(g(t),g(\epsilon t),...,g(\epsilon^{r-1} t))\mid g(t)\in \SL_2(\mathcal{K})\}.\]
	Hence
	\[\phi\circ \varphi:\begin{pmatrix}
	t & 0\\
	0 & t^{-1}
	\end{pmatrix}\mapsto \left(
	\begin{pmatrix}
	\epsilon^i t & 0\\
	0 & (\epsilon^i t)^{-1}
	\end{pmatrix}\right)_{i=0,...,r-1}
	\mapsto \prod_{i=0}^{r-1}(\epsilon^i t)^{\sigma^i \check{\theta}_0 }=n^{\check{\theta}_0}.
	\]
	
	Let $\mathfrak{s}$ be the product of $r$ copies of $\mathfrak{sl}_2$. Define $\sigma: \mathfrak{s}\to \mathfrak{s}$ by
	$$\sigma(x_1,,...,x_{r-1},x_r)=(\epsilon x_r,\epsilon x_1,...,\epsilon x_{r-1}).$$
	Since $\sigma$ has order $r$, we have $\mathfrak{s}=\oplus_{i=0}^{r-1}\mathfrak{s}_i$ where $\mathfrak{s}_i$ is the eigenspace of eigenvalue $\epsilon^i$. Then $\mathfrak{s}_1=\{(x,\epsilon x,...,\epsilon^{r-1} x)\mid x\in \mathfrak{sl}_2\}\cong \mathfrak{sl}_2$. The derivative of $\phi$ is $\mathrm{d}\phi:\mathfrak{s}\to \mathfrak{g}$ which induces $\mathfrak{s}_1\to\mathfrak{g}_1$. Hence we have the map $\Psi:\mathfrak{sl}_2\to \mathfrak{g}_1$.

	Consider the matrix $g(t)\in \SL_2(\mathcal{O}^-)$,
	\[g(t)=\begin{pmatrix}
	1+t^{-1} & t^{-2}\\
	t^{-1} & 1-t^{-1}+t^{-2}
	\end{pmatrix}=
	\begin{pmatrix}
	0 & 1\\
	-1 &t^2 -t+1
	\end{pmatrix}
	\begin{pmatrix}
	t^2 & 0\\
	0 & t^{-2}
	\end{pmatrix}
	\begin{pmatrix}
	1 & 0\\
	t^2 +t & 1
	\end{pmatrix}.
	\]
	Then $(\phi\circ \varphi)(g(t))\in G(\mathcal{O})^\sigma n^{2 \check{\theta}_0} G(\mathcal{O})^\sigma$. Since $G$ is not type $A_{2l}$, $\bar{\check{\theta}}_0=\gamma_0$ and then $(\phi\circ \varphi)(g(t))\cdot e_0\in \mathcal{G}r_{2\gamma_{0}}\cap \mathcal{G}r_{G,0}^-$. We have the commutative diagram
	\[\begin{tikzcd}
	{\Gr_{\mathrm{SL}_2,0}^-} &&&& {\mathcal{G}r_{\mathcal{S},0}^-} &&&& {\mathcal{G}r_{G,0}^-} \\
	\\
	{\mathfrak{sl}_2} &&&& {\mathfrak{s}_1} &&&& {\mathfrak{g}_1}
	\arrow["{g(t)\cdot L_0 \mapsto \varphi(g(t))\cdot e_0}", from=1-1, to=1-5]
	\arrow["{(g_i(t))_{i=0}^{r-1}\cdot e_0 \mapsto \phi(g_i(t))_{i=0}^{r-1})\cdot e_0}", from=1-5, to=1-9]
	\arrow["{\pi_{\mathrm{SL}_2}}"', from=1-1, to=3-1]
	\arrow[from=1-5, to=3-5]
	\arrow["{\pi}", from=1-9, to=3-9]
	\arrow["{x\mapsto (x,\epsilon x,...,\epsilon^{r-1}x)}"', from=3-1, to=3-5]
	\arrow["{}"', from=3-5, to=3-9]
	\end{tikzcd}\]
	where $ \Gr_{\mathrm{SL}_2,0}^-:= \mathrm{SL}_2(\mathcal{O}^-)_0\cdot e_0\subset \Gr_{\mathrm{SL}_2}$, and ${\mathcal{G}r_{\mathcal{S},0}^-}$ is defined similarly. 
	The commutativity follows from
	$$\pi((\phi\circ \varphi)(g(t))\cdot e_0)=\Psi(\pi_{\SL_2}(g(t)\cdot e_0))=\Psi\begin{pmatrix}
	1 & 0\\
	1 & -1
	\end{pmatrix}  , $$
	where the latter is not nilpotent.  It follows that, $ \pi( \mathcal{G}r_{2 \gamma_0 }  )\not \subseteq   \mathcal{N}_{\mathfrak{p}  } $.
	
	Suppose that $(X_N,r)= (A_{2n}, 2)$. In this case, $\bar{\check{\theta}}_0=2\gamma_0$ and $\sigma(\check{\theta}_0)={\check{\theta}_0}$. Then $\check{\theta}_0$ can be extended to $\phi:\mathrm{SL}_2\to G$ defined by
	\[\begin{pmatrix}
	1 & a\\
	0 & 1
	\end{pmatrix}\mapsto x_{\theta_0}(a), \hspace{0.5 cm}
	\begin{pmatrix}
	1 & 0\\
	a & 1
	\end{pmatrix}\mapsto y_{\theta_0}(a), \hspace{0.5 cm}
	\begin{pmatrix}
	a & 0\\
	0 & a^{-1}
	\end{pmatrix}\mapsto \check{\theta}_0(a).
	\]
	$\phi$ can extend the scalar to $\mathcal{K}$.  	
	Define a group homomorphism $\sigma:\mathrm{SL}_2(\mathcal{K})\to \mathrm{SL}_2(\mathcal{K})$ by
	
	\[\begin{pmatrix}
	a(t) & b(t)\\
	c(t) & d(t)
	\end{pmatrix}\mapsto
	\begin{pmatrix}
	a(-t) & -b(-t)\\
	-c(-t) & d(-t)
\end{pmatrix}
	\]
	where $a(t)\in \mathcal{K}$. Then $\phi:\mathrm{SL}_2(\mathcal{K})\to G(\mathcal{K})$ is $\sigma$-equivariant. The induced homomorphism $\sigma:\mathfrak{sl}_2\to \mathfrak{sl}_2$ is given by
	\[\begin{pmatrix}
	a & b\\
	c & -a
	\end{pmatrix}\mapsto
	\begin{pmatrix}
	a & -b\\
	-c & -a
	\end{pmatrix}.
	\]
	The derivative $\mathrm{d}\phi:\mathfrak{sl}_2\to \mathfrak{g}$ induces the map $\Psi:(\mathfrak{sl}_2)_1\to \mathfrak{g}_1$. Similar to the above arguement, we have the commutative diagram
	\[\begin{tikzcd}
	{\mathcal{G}r_{\mathrm{SL}_2,0}^-} && {\mathcal{G}r_{\mathrm{G},0}^-} \\
	\\
	{(\mathfrak{sl}_2)_1} && {\mathfrak{g}_1}
	\arrow[from=1-1, to=1-3]
	\arrow["{\pi_{\mathrm{SL}_2}}"', from=1-1, to=3-1]
	\arrow["{\Psi}", from=3-1, to=3-3]
	\arrow["{\pi}", from=1-3, to=3-3]
	\end{tikzcd}\]
	where ($\mathfrak{sl}_2)_1$ is the eigenspace of eigenvalue $-1$ under $\sigma$. Now consider $g(t)\in \mathrm{SL}_2(\mathcal{O}^-)^\sigma$
		\[g(t)=\begin{pmatrix}
	1 & t^{-1}\\
	t^{-1} & 1+t^{-2}
	\end{pmatrix}=
	\begin{pmatrix}
	1 & -t^3+t\\
	0& 1
	\end{pmatrix}
	\begin{pmatrix}
	t^2 & 0\\
	0 & t^{-2}
	\end{pmatrix}
	\begin{pmatrix}
	1 & t\\
	t & 1+t^2
	\end{pmatrix}.
	\]
		Then $\phi(g(t))\in G(\mathcal{O})^\sigma n^{\check{\theta}_0} G(\mathcal{O})^\sigma$ and $\phi(g(t))\cdot e_0\in \mathcal{G}r_{2\gamma_0}\cap \mathcal{G}r_0^-$. The result follows from
	$$\pi(\phi(g(t))\cdot e_0)=\Psi(\pi_{\SL_2}(g(t)\cdot e_0))=\Psi\begin{pmatrix}
	0 & 1\\
	1 & 0
	\end{pmatrix}$$
	where the latter is not nilpotent. It also follows that, $ \pi( \mathcal{G}r_{2 \gamma_0 }  )\not \subseteq   \mathcal{N}_{\mathfrak{p}  } $.
\end{proof}	

\begin{prop}
\label{prop2.4}
	For $\bar{\lambda}\in X_{*} (T)_{\sigma}^+$, if $\pi(\mathcal{G}r_{\bar{\lambda}}\cap \mathcal{G}r_0^-)\subset \mathcal{N}_{\mathfrak{p}}$, then $\bar{\lambda}$ is small.
\end{prop}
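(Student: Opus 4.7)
The plan is to argue by contraposition. Assume $\bar{\lambda}$ is not small, so $\bar{\lambda}\succeq 2\gamma_0$; the goal is to show that $\pi(\mathcal{G}r_{\bar{\lambda}}\cap \mathcal{G}r_0^-)$ cannot lie entirely in $\mathcal{N}_{\mathfrak{p}}$. The key idea is to propagate the hypothesis along the Schubert closure ordering down to $2\gamma_0$, and then invoke the already-established Lemma \ref{lemma:highest} to get a contradiction.

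First, I would use the stratification
\[
\overline{\mathcal{G}r}_{\bar{\lambda}}=\bigsqcup_{\bar{\mu}\preceq \bar{\lambda}}\mathcal{G}r_{\bar{\mu}},
\]
which gives $\mathcal{G}r_{2\gamma_0}\cap \mathcal{G}r_0^-\subseteq \overline{\mathcal{G}r}_{\bar{\lambda}}\cap \mathcal{G}r_0^-$ as soon as $\bar{\lambda}\succeq 2\gamma_0$. Next I would note that since $\mathcal{G}r_{\bar{\lambda}}$ is open and dense in $\overline{\mathcal{G}r}_{\bar{\lambda}}$ and $\mathcal{G}r_0^-$ is open in $\mathcal{G}r$, the subset $\mathcal{G}r_{\bar{\lambda}}\cap \mathcal{G}r_0^-$ is open and dense in $\overline{\mathcal{G}r}_{\bar{\lambda}}\cap \mathcal{G}r_0^-$ (density is the standard fact that the intersection of a dense open set with any open set is dense in that open set).

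Now I would exploit continuity of $\pi$ together with the fact that the nilpotent cone $\mathcal{N}_{\mathfrak{p}}$ is Zariski closed in $\mathfrak{p}$. From the hypothesis $\pi(\mathcal{G}r_{\bar{\lambda}}\cap \mathcal{G}r_0^-)\subseteq \mathcal{N}_{\mathfrak{p}}$, density of $\mathcal{G}r_{\bar{\lambda}}\cap \mathcal{G}r_0^-$ in $\overline{\mathcal{G}r}_{\bar{\lambda}}\cap \mathcal{G}r_0^-$ forces
\[
\pi\bigl(\overline{\mathcal{G}r}_{\bar{\lambda}}\cap \mathcal{G}r_0^-\bigr)\subseteq \mathcal{N}_{\mathfrak{p}}.
\]
Restricting to the stratum $\mathcal{G}r_{2\gamma_0}\cap \mathcal{G}r_0^-$, this yields $\pi(\mathcal{G}r_{2\gamma_0}\cap \mathcal{G}r_0^-)\subseteq \mathcal{N}_{\mathfrak{p}}$, which directly contradicts Lemma \ref{lemma:highest}. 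Hence $\bar{\lambda}\not\succeq 2\gamma_0$, i.e., $\bar{\lambda}$ is small.

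There is essentially no hard step here; the only thing to be careful about is confirming the openness/density of $\mathcal{G}r_{\bar{\lambda}}\cap \mathcal{G}r_0^-$ inside $\overline{\mathcal{G}r}_{\bar{\lambda}}\cap \mathcal{G}r_0^-$ so that the closure argument for the image under $\pi$ goes through. All the genuine work has already been packaged into Lemma \ref{lemma:highest}, where the explicit $\mathrm{SL}_2$-type reduction produces a point of $\mathcal{G}r_{2\gamma_0}\cap \mathcal{G}r_0^-$ with non-nilpotent image; this proposition is really just the ``propagation by Schubert closure'' envelope of that lemma.
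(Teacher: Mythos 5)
Your proof is correct and follows essentially the same approach as the paper: both arguments pass from $\mathcal{G}r_{\bar{\lambda}}\cap\mathcal{G}r_0^-$ to $\overline{\mathcal{G}r}_{\bar{\lambda}}\cap\mathcal{G}r_0^-$ via openness/density of $\mathcal{G}r_0^-$ and closedness of $\mathcal{N}_{\mathfrak{p}}$, and then invoke Lemma~\ref{lemma:highest} to conclude $\bar{\lambda}\nsucceq 2\gamma_0$. The paper's version is just a two-line compression of the same density-plus-closure argument, phrased directly rather than by contraposition.
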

\begin{proof}
	Since $\mathcal{G}r_0^-$ is an open subset of $\mathcal{G}r$, $\pi(\overline{\mathcal{G}r}_{\bar{\lambda}}\cap \mathcal{G}r_0^-)\subset \mathcal{N}_{\mathfrak{p}}$. By Lemma \ref{lemma:highest}, $\mathcal{G}r_{2\gamma_0} \nsubseteq  \overline{\mathcal{G}r}_{\bar{\lambda}}  $ which means $\bar{\lambda}\nsucceq 2\gamma_0$.
\end{proof}
	
	
	Define the following anti-involution
	$$\iota: G(\mathcal{K}) \to G(\mathcal{K}),\hspace{0.5 cm}g(t)\mapsto g(-t)^{-1}.$$
	It can be checked that $\iota$ commutes with $\sigma$, and $\iota$ preserves $G(\mathcal{K})^{\sigma},G(\mathcal{O})^{\sigma}$ and $K^-$. This induces the map
	$$\iota: \mathcal{G}r_0^- \to \mathcal{G}r_0^-, \hspace{0.5 cm} g(t)\cdot e_0\mapsto g(-t)^{-1}\cdot e_0.$$
The following lemma will be used in Section \ref{section:Schubert_Nilpotent}. 	
\begin{lemma}\label{lemma:iota}
	For $\bar{\lambda}\in X_{*} (T)_{\sigma}^+$, $\iota(\mathcal{M}_{\bar{\lambda}})\subset \mathcal{M}_{\bar{\lambda}}$.
\end{lemma}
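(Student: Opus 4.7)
The plan is to verify, for $x \in \mathcal{M}_{\bar{\lambda}}$, that $\iota(x)$ lies in both $\mathcal{G}r_0^-$ and $\mathcal{G}r_{\bar{\lambda}}$. Write $x = g\cdot e_0$ with $g$ the unique representative in $G(\mathcal{O}^-)_0^\sigma$. The substitution $t\mapsto -t$ preserves $\mathcal{O}^-$ together with the vanishing condition at $t=\infty$, and the compatibility $\iota\sigma=\sigma\iota$ recalled just before the lemma preserves $\sigma$-invariance; combined with inversion, this gives $g(-t)^{-1}\in G(\mathcal{O}^-)_0^\sigma$, so $\iota(x)\in \mathcal{G}r_0^-$.

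For the Schubert-cell inclusion, I would write $g=a\,n^\lambda\,b$ with $a,b\in G(\mathcal{O})^\sigma$, so that
\[
g(-t)^{-1} \;=\; b(-t)^{-1}\,n^\lambda(-t)^{-1}\,a(-t)^{-1}.
\]
Since $a$ is $\sigma$-invariant, the element $t\mapsto a(-t)$ is also $\sigma$-invariant (a direct consequence of the defining identity $\sigma(a)(t)=a(t)$ after substituting $t\mapsto -t$), and the same holds for $b$ and for their inverses; thus both outer factors lie in $G(\mathcal{O})^\sigma$, and it suffices to show $n^\lambda(-t)^{-1}\cdot e_0\in \mathcal{G}r_{\bar\lambda}$.

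A direct computation from the norm formula (\ref{equation:norm}) gives
\[
n^\lambda(-t)^{-1} \;=\; \Bigl(\,\sum_{i=0}^{r-1}\sigma^i(\lambda)\Bigr)(-1)\cdot n^{-\lambda},
\]
and the $T(\mathbb{C})$-scalar factor on the right is $\sigma$-fixed because the coweight $\sum_i\sigma^i(\lambda)$ is $\sigma$-invariant by construction; it therefore lies in $G(\mathcal{O})^\sigma$. Consequently $n^\lambda(-t)^{-1}\cdot e_0$ belongs to the $G(\mathcal{O})^\sigma$-orbit of $n^{-\lambda}\cdot e_0$, which is the Schubert cell indexed by the dominant $W_H$-representative of $-\bar\lambda$. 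To conclude I would invoke the fact that in every twisted setting considered in this paper the group $H=(\check G)^\sigma$ has root system of type $B_\ell$, $C_\ell$, $F_4$, or $G_2$, for all of which $-w_0$ acts as the identity on the weight lattice. Hence $-\bar\lambda$ is $W_H$-conjugate to $\bar\lambda$, giving $n^{-\lambda}\cdot e_0\in \mathcal{G}r_{\bar\lambda}$ as required.

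The main technical point is the explicit formula $n^\lambda(-t)^{-1}=c\cdot n^{-\lambda}$ with the $\sigma$-invariant scalar $c$; once this is in hand, the concluding reduction to $-w_0=\mathrm{Id}$ is a routine type-by-type check over the twisted affine data.
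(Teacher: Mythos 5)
Your proof is correct and follows essentially the same route as the paper's: both rest on the identity $\iota(n^\lambda)=c\,n^{-\lambda}$ with a $\sigma$-invariant scalar $c$, after which the problem reduces to showing $n^{-\lambda}\cdot e_0\in\mathcal{G}r_{\bar\lambda}$. Your final step invokes the Cartan decomposition together with $-w_0=\mathrm{Id}$ on the weight lattice of $H$ (valid for the types $B_\ell$, $C_\ell$, $F_4$, $G_2$ that can arise), while the paper instead conjugates by a $\sigma$-fixed representative $\dot w_0$ of the longest element of $W(G)$ via a short case split ($w_0=-1$ or $w_0=-\sigma$); these amount to the same observation, since $w_0$ of $G$ projects to the longest element of $W_H$.
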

	
		
	\begin{proof}
		It suffices to prove $\iota(n^\lambda)\in\mathcal{M}_{\bar{\lambda}}$ for each $\lambda\in X_{*} (T)^+$. 
		\begin{align*}
		\iota(n^{\lambda})&=\iota(\epsilon^{\sigma\lambda+2\sigma^2\lambda+...+(r-1)\sigma^{r-1}\lambda}t^{\sum_{i=0}^{r-1}\sigma^i\lambda})\\
		&=\epsilon^{-(\sigma\lambda+2\sigma^2\lambda+...+(r-1)\sigma^{r-1}\lambda)}(-1)^{\sum_{i=0}^{r-1}\sigma^i\lambda}t^{-\sum_{i=0}^{r-1}\sigma^i\lambda}\\
		&=(-1)^{\sum_{i=0}^{r-1}\sigma^i\lambda}n^{-\lambda}.
		\end{align*}
		Since $(-1)^{\sum_{i=0}^{r-1}\sigma^i\lambda}$ is fixed by $\sigma$, $\iota(n^{\lambda})\in G(\mathcal{O})^\sigma n^{-\lambda}G(\mathcal{O})^\sigma$.
		Let $W$ be the Weyl group of $G$ with respect to the maximal torus $T$ and $\omega_0$ the longest element of $W$. We can choose a representative $\dot{\omega}_0 \in G$ of $\omega_0$ such that $\sigma(\dot{\omega}_0 )=\dot{\omega}_0$, cf.\,\cite[Section 2.3]{HS}.  
		
		When $G$ is of type $D_{2\ell}$ with $\ell \geq 2$, $w_0=-1$; otherwise, $w_0=-\sigma$ and $\sigma$ is of order $2$, cf.\,\cite[Ex 5, p.71]{Hu2}. If $w_0=-1$, it is easy to see that $n^{-\lambda}=w_0n^{\lambda}w_0^{-1}$. If $w_0=-\sigma$ and $\sigma$ has order 2,
		$$n^{-\lambda}=(-1)^{-\sigma\lambda}t^{-(\lambda+\sigma\lambda)}=(-1)^{w_0\lambda}t^{w_0(\lambda+\sigma\lambda)}=w_0(-1)^\lambda t^{\lambda+\sigma\lambda}w_0^{-1}=w_0 (-1)^{\lambda+\sigma\lambda}n^{\lambda}w_0^{-1}.$$
		In any case, $\iota(n^\lambda)\in G(\mathcal{O})^\sigma n^{\lambda}G(\mathcal{O})^\sigma$.
	\end{proof}

\section{Nilpotent orbits in the space of self-adjoint maps}
\label{sect_3}
In this section, we will review some facts on the nilpotent orbits in certain symmetric spaces. These results are known, cf.\,\cite{Se}. We provide proofs here, as the proofs in \cite{Se} are omitted.

	Let $B=\langle \cdot , \cdot \rangle$ be a nondegenerate symmetric or skew-symmetric bilinear form on a vector space $V=\mathbb{C}^m$ and $\mathcal{A}$ the set of self-adjoint linear maps under the bilinear form.  In this section, we describe the classification of nilpotent orbits in the space $\mathcal{A}$ in Theorem \ref{theorem:classify}, Theorem \ref{theorem:SO_{2n+1}} and Theorem \ref{theorem:SO_{2n}}. 
		
	The isometry group of the form $B$ is
	$$I_B=\{g\in \mathrm{GL}(V)\mid \langle gu , gv \rangle=\langle u , v \rangle \text{ for all } u,v\in V\},$$
	whose Lie alegbra is
\begin{equation}  
	\mathfrak{g}_B := \{X\in \mathfrak{sl}(V)\mid \langle Xu , v \rangle+\langle u , Xv\rangle=0 \text{ for all } u,v\in V\}.
\end{equation}
When $B$ is symplectic, $\dim V$ is even, $I_B\cong \mathrm{Sp}_{2n}$ and $\mathfrak{g}_B\simeq \mathfrak{sp}_{2n}$ where $m=2n$. When $B$ is symmetric, $I_B\cong \mathrm{O}_m$ and $\mathfrak{g}_B\cong \mathfrak{so}_m$.

	
	The group $I_B$ acts on the space of self-adjoint linear maps
\begin{equation}\label{equation:spaceA}
	\mathcal{A}=\{X\in \End(V)\mid \langle Xu , v \rangle=\langle u , Xv \rangle \text{ for all } u,v\in V\}
\end{equation}
	by conjugation. The orbit is called nilpotent if it is the orbit of a nilpotent element of $\mathcal{A}$. 
	
	Let $\mathfrak{g}$ be a complex semisimple Lie algebra. Suppose that $\mathfrak{g}$ has $\mathbb{Z}_m$-grading
	$$\mathfrak{g}=\bigoplus_{i\in \mathbb{Z}_m }\mathfrak{g}_i$$
	so that $[\mathfrak{g}_k,\mathfrak{g}_\ell]\subset \mathfrak{g}_{k+\ell}$. We have the following graded version of Jacobson--Morozov Theorem and Kostant Theorem.



\begin{lemma}\label{lemma:JMgrading}
	Let $X$ be a nonzero nilpotent element in $\mathfrak{g}_{i}$. 
	\begin{enumerate}[1)]
		\item There exists an $\mathfrak{sl}_2$-triple $\{H,X,Y\}$ such that $H\in \mathfrak{g}_{0}$ and $Y\in \mathfrak{g}_{-i}$.
		\item Let $\{H',X,Y'\}$ be another $\mathfrak{sl}_2$-triple such that $Y'\in\mathfrak{g}_{-i}$ and $H'\in\mathfrak{g}_0$. Then there exists $g\in K^X$ such that $g\cdot H=H'$, $g\cdot X=X$ and $g\cdot Y=Y'$.
	\end{enumerate}
	
\end{lemma}
\begin{proof}
The first part follows from the usual Jacobson--Morozov Theorem, and the proof is similar to \cite[Lemma 1.1]{EK}. We replace $\mathfrak{u}^X:=\mathfrak{g}^X\cap [\mathfrak{g},X]$ in \cite[Lemma 3.4.5]{CM} by $\mathfrak{u}_0^X:=\mathfrak{g}_0^X\cap [\mathfrak{g},X]$. Then the proof of the second part is similar to \cite[Theorem 3.4.10]{CM}.
\end{proof}

For each $A\in \mathfrak{sl}(V)$, as a linear map, we denote its adjoint by $A^*$ under the form $B$. We define an involution $\sigma$ on $\mathfrak{g}=\mathfrak{sl}(V)$ by
	\begin{equation}\label{equation:sigma}
	\sigma(A)=-A^*.
	\end{equation}
	Then $\mathfrak{g}$ is the direct sum of eigenspaces, $\mathfrak{g}=\mathfrak{g}_0\oplus \mathfrak{g}_1$. Thus $\mathfrak{g}_0=\mathfrak{g}_B$ and $\mathfrak{g}_1=\mathcal{A}$.  Fix a nonzero nilpotent element $X\in \mathcal{A}$.  By Lemma \ref{lemma:JMgrading}, there exists $Y\in \mathcal{A}$ and $H\in \mathfrak{g}_B$, such that $X,Y,H$ is an $\mathfrak{sl}_2$-triple. 
This induces a representation of $\mathfrak{sl}_2$ on $V$ and hence we have a decomposition
	\begin{equation}\label{eq:decomp}
	V=\bigoplus_{r\geq 0}M(r)
	\end{equation}
	where $M(r)$ is a finite direct sum of irreducible representation of $\mathfrak{sl}_2$ of highest weight $r$.
	For $r\geq 0$, let $H(r)$ be the highest weight space in $M(r)$. Define a new bilinear form $(\cdot,\cdot)$ on $H(r)$ by
	$$(u,v)_r=\langle u,Y^r v\rangle.$$
	
\begin{lemma}\label{lemma:newform}
	For any $r\geq 0$, $(\cdot,\cdot)_r$ is symplectic (resp. symmetric) if $B$ is symplectic (resp. symmetric). 
\end{lemma}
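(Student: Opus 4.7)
The plan is to establish the symmetry type first and then non-degeneracy. For the symmetry type, I would exploit that $Y \in \mathcal{A}$ is self-adjoint with respect to $B$, so iterating $\langle Yu, v\rangle = \langle u, Yv\rangle$ yields
\[ (u, v)_r = \langle u, Y^r v \rangle = \langle Y^r u, v \rangle = \pm \langle v, Y^r u \rangle = \pm (v, u)_r, \]
with sign $+$ if $B$ is symmetric and $-$ if $B$ is skew-symmetric. This delivers the required symmetry of $(\cdot,\cdot)_r$ at once.

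For non-degeneracy, the strategy is to show that if $u \in H(r)$ satisfies $(u, v)_r = 0$ for all $v \in H(r)$, then $\langle u, \cdot\rangle$ vanishes identically on $V$, forcing $u = 0$ by non-degeneracy of $B$. Decomposing $V$ by the $H$-eigenvalues as $V = \bigoplus_a V_a$, the argument splits into three vanishings. First, since $H \in \mathfrak{g}_B$ is skew-adjoint, the identity $(a+b)\langle u, w\rangle = 0$ for $u \in V_a$, $w \in V_b$ shows that $H(r) \subset V_r$ is $B$-orthogonal to every $V_b$ with $b \neq -r$.

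Second, one decomposes $V_{-r} = \bigoplus_{s \geq r,\ s \equiv r \,(\mathrm{mod}\,2)} M(s)_{-r}$, and I claim $H(r)$ is orthogonal to $M(s)_{-r}$ for each $s > r$. Indeed, any $w \in M(s)_{-r}$ can be written $w = Y^{(s+r)/2} w''$ with $w'' \in M(s) \cap V_s$, and self-adjointness of $Y$ gives $\langle u, w\rangle = \langle Y^{(s+r)/2} u, w''\rangle$; but $Y^{(s+r)/2} u$ lies in $M(r)$ with $H$-weight $r - (s+r) = -s < -r$, which is outside the weight range $[-r, r]$ of $M(r)$, so $Y^{(s+r)/2} u = 0$. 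Third, since $Y^r \colon H(r) \to M(r)_{-r}$ is a linear isomorphism (standard $\mathfrak{sl}_2$-theory applied to $M(r) \cong V(r) \otimes H(r)$), the hypothesis forces $\langle u, \cdot\rangle$ to vanish on $M(r)_{-r}$ as well. Assembling these three vanishings, $\langle u, \cdot\rangle \equiv 0$ on $V$, hence $u = 0$.

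The principal obstacle is the second vanishing: weight considerations from $H$ alone do not distinguish among the summands $M(s)_{-r}$ contributing to $V_{-r}$, so the self-adjointness of $Y$ must be used to transfer $Y^{(s+r)/2}$ onto $u$ and then invoke the bounded weight range of $M(r)$. Everything else is routine weight-space bookkeeping coupled with non-degeneracy of $B$.
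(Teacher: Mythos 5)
Your proof is correct and follows essentially the same route as the paper: both arguments first establish the symmetry type from self-adjointness of $Y$, then use the $H$-weight decomposition to reduce the non-degeneracy claim to a single weight space, and finally use self-adjointness of $Y$ again to kill all pairings except the one coming from $M(r)$. The only difference is organizational: the paper works with $u = Y^r u' \in V_{-r}$ and decomposes the complementary weight space as $V_r = H(r) \oplus W$ with $W = \mathrm{im}\,Y \cap V_r$, whereas you keep $u \in H(r) \subset V_r$ and decompose $V_{-r}$ by $\mathfrak{sl}_2$-isotype as $\bigoplus_{s \geq r} M(s)_{-r}$; your isotypic decomposition makes the bookkeeping somewhat more transparent, but the underlying ideas coincide.
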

\begin{proof}
We assume $B$ is symplectic.  The proof is similar when $B$ is symmetric. 
	It is easy to see that $(\cdot,\cdot)_r$ is skew-symmetric. It remains to show that $(\cdot,\cdot)_r$ is nondegerate. Let $V_r$ be an $r$-weight space in $\mathbb{C}^{2n}$. For any $u\in V_r$, $v\in V_s$ with $s\neq -r$,
	$$(r+s)\langle u,v \rangle = \langle ru,v \rangle + \langle u,sv \rangle=\langle Hu,v \rangle+\langle u,Hv \rangle=0$$
	This implies that $V_r$ and $V_s$ are $\langle \cdot,\cdot \rangle$-orthogonal. Let
	$$W=\text{Span}\{u\in V_r \mid u=Yv \text{ for some } v\in \mathbb{C}^{2n}\}.$$
	It can be seen that $V_r=H(r)\oplus W$. For $u\in H(r)$ and $v\in W$, write $v=Yv'$,
	$$(u,v)_r=\langle u,Y^r v\rangle=\langle u,Y^{r+1}v'\rangle=\langle Y^{r+1}u,v'\rangle=0.$$
	Hence $H(r)$ is $(\cdot,\cdot)_r$-orthogonal to $W$.
	
	
	We claim that $\langle \cdot,\cdot \rangle:(Y^r\cdot H(r))\times H(r)\to \mathbb{C}$ is nondegenerate. Let $u=Y^r u'\in Y^r\cdot H(r)$ be such that $\langle u,v\rangle=0$ for all $v\in H(r)$. For each $w\in \mathbb{C}^{2n}$, write $w=\sum_{s} w_s$ where each $w_s$ belongs to $V_s$. Since $u\in V_{-r}$, $\langle u,w_s\rangle=0$ for $s\neq r$. Write $w_r=w_1+w_2$ where $w_1\in H(r)$ and $w_2=Y w'_2\in W$. By the assumption $\langle u,w_1\rangle=0$ and hence
	$$\langle u,w_r\rangle=\langle u,w_2\rangle=\langle Y^r u',Y w'_2\rangle=\-\langle Y^{r+1} u', w'_2\rangle=0.$$
	We obtain $\langle u,w\rangle=0$ for any $w$ and hence $u=0$. This claim implies that $(\cdot,\cdot)_r$ is nondegenerate.
\end{proof}	

	A partition of a positive integer is denoted by a tuple $[d_1,d_2,...,d_k]$ of positive integers. We use the exponent notation 
	$$[a_{1}^{i_1},...,a_{r}^{i_r}]$$
	to denote a partition where $a_{j}^{i_j}$ means there are $i_j$ copies of $a_{j}$. For example, $[3^2,1^4]=[3,3,1,1,1,1]$ is a partition of 10. Put $r_i=|\{j\mid d_j=i\}|$ and $s_i=|\{j\mid d_j\geq i\}|$. In fact, each partition can be illustrated by Young diagram and then $s_i$ is the $i$-th part of the dual diagram. The following Theorem gives the parametrization of nilpotent $I_B$-orbits in $\mathcal{A}$.

\begin{theorem}\label{theorem:classify}
	There exists one-to-one correspondences
	$$\{\text{nilpotent } \mathrm{Sp}_{2n}\text{-orbits in } \mathcal{A}\}\leftrightarrow
	\left\{\begin{array}{c}
	\text{partitions of } 2n \text{ such that}	\\
	\text{every part occurs with even multiplicity}
	\end{array}\right\}.$$
	and
	$$\{\text{nilpotent } \mathrm{O}_m\text{-orbits in } \mathcal{A}\}\leftrightarrow
	\{\text{partitions of } m \}.$$		
\end{theorem}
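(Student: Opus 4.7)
The strategy is to send a nilpotent $I_B$-orbit to the Jordan type of a representative, derive the parity constraint on multiplicities in the symplectic case via Lemma~\ref{lemma:newform}, and establish the bijection by realizing each admissible partition and proving that Jordan type is a complete $I_B$-invariant. For well-definedness, given a nilpotent $X\in\mathcal{A}$, let $\mathbf{d}(X)$ be its Jordan type as an element of $\mathrm{End}(V)$; this is a partition of $m$, constant on the $\mathrm{GL}(V)$-orbit and a fortiori on the $I_B$-orbit. By Lemma~\ref{lemma:JMgrading} applied to the $\mathbb{Z}/2$-grading \eqref{equation:sigma}, one may embed $X$ in an $\mathfrak{sl}_2$-triple $(H,X,Y)$ with $H\in\mathfrak{g}_B$ and $Y\in\mathcal{A}$. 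The resulting $\mathfrak{sl}_2$-action decomposes $V$ as in \eqref{eq:decomp}, and the multiplicity of the part $r+1$ in $\mathbf{d}(X)$ equals $\dim H(r)$. When $B$ is symplectic, Lemma~\ref{lemma:newform} yields a nondegenerate symplectic form $(\cdot,\cdot)_r$ on $H(r)$, forcing $\dim H(r)$ to be even; this is the required parity constraint. In the symmetric case no constraint arises, since symmetric forms exist in every dimension.

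\textbf{Surjectivity.} I would realize each admissible partition explicitly. For $\mathrm{O}_m$, take a partition $[d_1,\dots,d_k]$ of $m$ and, on each summand $V_j=\mathbb{C}^{d_j}$ with basis $e_1^{(j)},\dots,e_{d_j}^{(j)}$, set $N_j e_i^{(j)} = e_{i-1}^{(j)}$ and $\langle e_i^{(j)}, e_k^{(j)}\rangle = \delta_{i+k,\,d_j+1}$; a routine check shows $N_j$ is self-adjoint, and the orthogonal direct sum then lies in $\mathcal{A}$ with the prescribed Jordan type. For $\mathrm{Sp}_{2n}$, handle each pair of equal parts $(d,d)$ on $\mathbb{C}^{2d}$ equipped with a standard hyperbolic symplectic form and the direct sum of two Jordan blocks of size $d$, which is again self-adjoint.

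\textbf{Injectivity and the main obstacle.} Suppose $X,X'\in\mathcal{A}$ have the same Jordan type. Extend each to an $\mathfrak{sl}_2$-triple as above, so that $V$ carries two $\mathfrak{sl}_2$-module structures with identical multiplicities in their decompositions \eqref{eq:decomp}. By Lemma~\ref{lemma:newform}, the induced forms on the highest-weight spaces $H(r)$ and $H'(r)$ are of the same type and dimension, hence isometric; choose isometries $\phi_r\colon H(r)\to H'(r)$ and extend by $\phi_r(Y^k u) = (Y')^k \phi_r(u)$ to a linear automorphism $g$ of $V$ intertwining the two triples, so that in particular $gXg^{-1} = X'$. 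The remaining step, which I expect to be the main obstacle, is to verify that $g\in I_B$: this reduces to showing that for $u\in H(r)$, $v\in H(s)$, and $a,b\geq 0$, the pairing $\langle Y^a u, Y^b v\rangle$ is nonzero only when $r=s$ and $a+b=r$, and in that case is a universal scalar multiple of $(u,v)_r$. Both facts follow from an $\mathfrak{sl}_2$ weight-counting argument of the same flavour as the proof of Lemma~\ref{lemma:newform}, so the technical core has essentially been supplied there.
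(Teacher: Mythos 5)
Your proof is correct and follows essentially the same strategy as the paper, which cites \cite[Lemma 5.1.17]{CM} for the bijection framework and then derives the parity constraint from Lemma~\ref{lemma:newform} exactly as you do. The surjectivity construction and the isometry argument for injectivity that you spell out are precisely the content of the cited CM lemma (adapted to the self-adjoint rather than skew-adjoint setting), and your observation that the key pairing identity $\langle Y^a u, Y^b v\rangle=\delta_{rs}\delta_{a+b,r}(u,v)_r$ rests on the self-adjointness of $X,Y$ together with weight orthogonality is the right technical core.
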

\begin{proof}
	The proof is similar to \cite[Lemma 5.1.17]{CM}. For the case that $B$ is symplectic, it suffices to show that any nilpotent element in $\mathcal{A}$ gives rise to a partition of $2n$ such that every part occurs with even multiplicity. Given nilponent $X\in \mathcal{A}$, the number of Jordan blocks of size $r+1$ equals to the multiplicity of $M(r)$ in $\mathbb{C}^{2n}$ which is exactly $\dim H(r)$. By Lemma \ref{lemma:newform}, $\dim H(r)$ is even for every $r$.
	
	If $B$ is symmetric, there are no constraints on $\dim H(r)$ which means there are no conditions on partitions of $m$.
\end{proof}

	
	
\begin{theorem}\label{theorem:SO_{2n+1}}
	There exists one-to-one correspondence
	$$\{\text{nilpotent } \mathrm{SO}_{2n+1}\text{-orbits in } \mathcal{A}\}\leftrightarrow
	\left\{\text{partitions of } 2n+1 \right\}.$$
\end{theorem}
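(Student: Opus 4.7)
The plan is to deduce this from Theorem \ref{theorem:classify} by showing that $\mathrm{O}_{2n+1}$-orbits and $\mathrm{SO}_{2n+1}$-orbits in $\mathcal{A}$ coincide. Since Theorem \ref{theorem:classify} already parametrizes the nilpotent $\mathrm{O}_{2n+1}$-orbits in $\mathcal{A}$ by partitions of $2n+1$, this reduction will immediately yield the desired bijection for $\mathrm{SO}_{2n+1}$.

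The key observation exploits the odd dimension: the scalar $-I \in \mathrm{GL}(V)$ preserves the symmetric form $B$, and $\det(-I) = (-1)^{2n+1} = -1$, so $-I$ lies in $\mathrm{O}_{2n+1}$ but not in $\mathrm{SO}_{2n+1}$. Consequently $\mathrm{O}_{2n+1} = \mathrm{SO}_{2n+1} \sqcup (-I)\cdot \mathrm{SO}_{2n+1}$. Since $-I$ is central in $\mathrm{GL}(V)$, it acts trivially by conjugation on $\mathcal{A}$: writing any $g \in \mathrm{O}_{2n+1}$ as $g = \varepsilon h$ with $\varepsilon \in \{\pm I\}$ and $h \in \mathrm{SO}_{2n+1}$, one has $gXg^{-1} = hXh^{-1}$ for every $X \in \mathcal{A}$. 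Therefore each $\mathrm{O}_{2n+1}$-orbit in $\mathcal{A}$ is automatically a single $\mathrm{SO}_{2n+1}$-orbit, and the bijection of Theorem \ref{theorem:classify} transfers verbatim to give the stated parametrization.

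There is essentially no real obstacle here; the well-known phenomenon in even dimension, where certain ``very even'' partitions label $\mathrm{O}_{2n}$-orbits that split into two distinct $\mathrm{SO}_{2n}$-orbits, cannot occur in our setting precisely because $-I \notin \mathrm{SO}_{2n+1}$ when $\dim V$ is odd, so no representative-switching issue arises.
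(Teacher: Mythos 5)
Your proof is correct and follows essentially the same route as the paper: both use the decomposition $\mathrm{O}_{2n+1}=\mathrm{SO}_{2n+1}\times\{\pm I\}$ (valid because $\dim V$ is odd) together with the fact that $-I$ acts trivially by conjugation, so that $\mathrm{O}_{2n+1}$-orbits and $\mathrm{SO}_{2n+1}$-orbits in $\mathcal{A}$ coincide, after which Theorem \ref{theorem:classify} gives the parametrization. You simply spell out the centrality and determinant observations that the paper leaves implicit.
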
	
\begin{proof}
	Since $\mathrm{O}_{2n+1}=\mathrm{SO}_{2n+1}\times \{\pm I_{2n+1}\}$, the orbits under $\mathrm{O}_{2n+1}$ and $\mathrm{SO}_{2n+1}$ coincide. The results immediately follows from Theorem \ref{theorem:classify}
\end{proof}

	Consider the case that $B$ is symmetric and $m=2n$. Given nilpotent elements $X,X'\in\mathcal{A}$ whose partitions are the same and have at least one odd part. Say that they are conjugated by an element $g\in \mathrm{O}_{2n}$. If $\det g=1$, we conclude that $X,X'$ are in the same $\mathrm{SO}_{2n}$-orbits. Suppose that $\det g=-1$. We modify this $g$ so that it has determinant $1$. By Lemma \ref{lemma:JMgrading}, $X$ gives rise to the decomposition (\ref{eq:decomp}). An odd part in the partition corresponds to an odd dimensional irreducible representation $S$ of $\mathfrak{sl}_2$ in $\mathbb{C}^{2n}$. We put $h=g$ except that $h(v)=-g(v)$ for $v\in S$. Therefore, $\det h=1$, and $X$ and $X'$ are conjugated by $h$. If there is no odd parts, we need the following Lemma.
	
\begin{lemma}\label{lemma:stabilizer}
	Let $X$ be a nilpotent element in $\mathcal{A}$ whose partition contains only even parts, and $k\in \mathrm{O}_{2n}$ such that $k\cdot X=kXk^{-1}=X$. Then $\det k=1$.
\end{lemma}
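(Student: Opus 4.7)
The plan is to reduce the computation of $\det k$ for $k \in Z_{\mathrm{O}_{2n}}(X)$ to a simpler question on the highest-weight pieces of the $\mathfrak{sl}_2$-module $V$, where the parity hypothesis on the partition forces $\det k = 1$.

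First, I would apply the graded Jacobson--Morozov theorem (Lemma \ref{lemma:JMgrading}) to the $\mathbb{Z}/2$-grading $\mathfrak{sl}(V) = \mathfrak{g}_B \oplus \mathcal{A}$ and complete $X$ to an $\mathfrak{sl}_2$-triple $\{H, X, Y\}$ with $H \in \mathfrak{g}_B$ and $Y \in \mathcal{A}$. The corresponding decomposition of $V$ is $V = \bigoplus_r M(r)$ with $M(r) \cong H(r) \otimes S_r$, where $S_r$ denotes the $(r+1)$-dimensional irreducible $\mathfrak{sl}_2$-module. The hypothesis that every part of the partition of $X$ is even translates to: $r+1$ is even whenever $H(r) \neq 0$.

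Next, I would reduce to the case where $k$ centralizes the full triple $\{H, X, Y\}$, not merely $X$. The key tool is a Levi-type decomposition
\[ Z_{\mathrm{O}_{2n}}(X) = Z_{\mathrm{O}_{2n}}(\{H, X, Y\}) \ltimes U, \]
with $U$ a unipotent group. Writing $k = k_0 u$ accordingly, $\det u = 1$ reduces the problem to showing $\det k_0 = 1$. This reduction is the main obstacle: I would either invoke the symmetric-pair analogue (of Kostant--Rallis type) of the standard Levi decomposition of centralizers of nilpotent elements, or, for a more self-contained argument, use the cocharacter $\phi \colon \mathbb{G}_m \to I_B$ attached to $H$ (satisfying $\phi(t) X \phi(t)^{-1} = t^2 X$) together with the $\mathrm{ad}(H)$-weight decomposition of $k$ to extract $k_0$ as an appropriate leading term.

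Finally, for $k_0 \in Z_{\mathrm{O}_{2n}}(\{H, X, Y\})$, the element $k_0$ preserves each $H(r)$ and the induced form $(\cdot,\cdot)_r$ on $H(r)$. By Lemma \ref{lemma:newform}, $(\cdot,\cdot)_r$ is symmetric (since $B$ is symmetric and $Y \in \mathcal{A}$ is self-adjoint), so
\[ Z_{\mathrm{O}_{2n}}(\{H, X, Y\}) \cong \prod_r \mathrm{O}(H(r), (\cdot,\cdot)_r), \]
with an element $(k_r)$ acting on $M(r) = H(r) \otimes S_r$ as $k_r \otimes \mathrm{Id}_{S_r}$. The tensor-product determinant formula then yields
\[ \det(k_0|_V) = \prod_r \det(k_r)^{r+1}, \]
and since $r+1$ is even for every nonzero $H(r)$, each factor equals $1$. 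Hence $\det k = \det k_0 = 1$.
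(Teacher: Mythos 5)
Your proposal reaches the right conclusion and shares the same endgame as the paper: once $k$ is known to centralize a full $\mathfrak{sl}_2$-triple $(H,X,Y)$ compatible with the grading, the determinant factors over the isotypic pieces $M(r)\cong H(r)\otimes S_r$, and the hypothesis that all parts are even forces each exponent $r+1$ to be even, giving $\det k=1$. Where you and the paper genuinely diverge is in the reduction to this centralized-triple situation. The paper first replaces $k$ by its semisimple part $k_s$ (multiplicative Jordan decomposition inside the algebraic group $\mathrm{O}_{2n}^X$, so $\det k_s=\det k$), observes that $\mathfrak{g}^{k_s}$ is then reductive and $\sigma$-stable, and applies the graded Jacobson--Morozov lemma \emph{inside} $\mathfrak{g}^{k_s}$ to produce a triple that $k_s$ already centralizes. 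You instead fix a graded triple first and then propose a Levi decomposition $Z_{\mathrm{O}_{2n}}(X)=Z_{\mathrm{O}_{2n}}(H,X,Y)\ltimes U$, splitting $k=k_0u$ with $u$ unipotent.

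Both reductions work, but the paper's is the more economical given what is already on the table: Jordan decomposition, reductivity of the fixed-point algebra of a semisimple element, and Lemma~\ref{lemma:JMgrading} are all elementary and self-contained, and no new structure theory about centralizers is needed. Your route requires establishing the Levi decomposition of the centralizer of a nilpotent element in the isotropy group of a symmetric pair (the graded analogue of the usual $G^X=G^{H,X,Y}\ltimes U^X$). You correctly flag this as the main obstacle. It does hold --- one can see it by intersecting the standard Levi decomposition with $K=O_{2n}$, since the $\mathrm{ad}(H)$-grading defining $U^X$ is $\sigma$-stable when $H\in\mathfrak{k}$ --- but you would need to either prove this or cite a Kostant--Rallis type reference, which is more overhead than the paper incurs. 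One small bonus of your route: the Schur-lemma argument on $H(r)\otimes S_r$ gives $\det(k_0|_{M(r)})=\det(k_r)^{r+1}$ in one stroke, whereas the paper walks step-by-step through the weight spaces via the intertwining map $X|_{M(r)_\ell}$; these are equivalent but your phrasing is a bit slicker.
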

\begin{proof}
Let $\mathrm{O}_{2n}^X $ be the stabilizer group of $O_{2n}$ at $X$. Then $k\in \mathrm{O}_{2n}^X$. 	By multiplicative Jordan decomposition, cf.\,\cite[Theorem 4.4, p.83]{Bo}, let $k_s\in O_{2n}^X$ be the semisimple part of $k$. Then  $\det k_s=\det k$.  Hence we may assume that $k$ is semisimple. Let $\sigma$ be an automorphism on $\mathfrak{g}=\mathfrak{sl}(V)$ defined by (\ref{equation:sigma}). Then $\sigma$ commutes with ${\rm Ad} k$ on $\mathfrak{g}$, as $k\in \mathrm{O}_{2n}$. Then $\mathfrak{g}=\mathfrak{g}_0\oplus \mathfrak{g}_1$, and we have the decomposition of $k$-stabilizers $\mathfrak{g}^k=\mathfrak{g}_0^k\oplus \mathfrak{g}_1^k$ where $\mathfrak{g}_i^k=\mathfrak{g}_i\cap \mathfrak{g}^k$. Since $\mathfrak{g}^k$ is reductive and $X\in \mathfrak{g}_1^k$, by Lemma \ref{lemma:JMgrading}, there exists an $\mathfrak{sl}_2$-triple $H,X,Y$ such that $X,Y\in \mathfrak{g}_1^k$ and $H\in \mathfrak{g}_0^k$ and hence we have the decomposition (\ref{eq:decomp}). It is easy to see that $k(M(r))\subset M(r)$, and also, $k$ stabilizes each weight space of $M(r)$. 
	
	Recall that $\langle \cdot , \cdot \rangle$ is a nondegenerate symmetric form on $V=\mathbb{C}^{2n}$ and a form on $H(r)$ given by $(u,v)_r=\langle u , Y^r v \rangle$ is also symmetric for any $r\geq 0$. For any $u,v\in H(r)$,
	$$(ku,kv)_r=\langle ku, Y^r kv \rangle=\langle ku , kY^r v \rangle=\langle u , Y^r v\rangle=(u,v)_r.$$ Hence $k\big|_{H(r)}\in \mathrm{O}(H(r))$ for any $r$. In particular $\det\left( k\big|_{H(r)}\right)=\pm 1$.
	
	Let $M(r)_\ell$ be an $\ell$-weight space, and $L(r)$ the lowest weight space in $M(r)$. Observe that $X\big|_{M(r)_\ell}$ is an isomorphism from $M(r)_\ell$ to $M(r)_{\ell+2}$ and the diagram
	\[\begin{tikzcd}
	{L(r)} && {M(r)_\ell} && {M(r)_{\ell+2}} && {H(r)} \\
	\\
	{L(r)} && {M(r)_\ell} && {M(r)_{\ell+2}} && {H(r)}
	\arrow["{X\big|_{M(r)_\ell}}", from=1-3, to=1-5]
	\arrow["{k\big|_{M(r)_\ell}}"', from=1-3, to=3-3]
	\arrow["{k\big|_{M(r)_{\ell+2}}}", from=1-5, to=3-5]
	\arrow["{X\big|_{M(r)_\ell}}"', from=3-3, to=3-5]
	\arrow[from=1-5, to=1-7, dashed]
	\arrow[from=3-5, to=3-7, dashed]
	\arrow[from=1-1, to=1-3, dashed]
	\arrow[from=3-1, to=3-3, dashed]
	\arrow["{k\big|_{L(r)}}"', from=1-1, to=3-1]
	\arrow["{k\big|_{H(r)}}", from=1-7, to=3-7]
	\end{tikzcd}\]
	commutes. Then $k\big|_{M(r)_\ell}$ has the same determinant for all $\ell$. Since $X$ has only even parts, the number of weight spaces in $M(r)$ is even for each $r$. Then 
	$$\det k=\prod_r\det\left( k\big|_{M(r)}\right)=\prod_r\prod_\ell  \det\left( k\big|_{M(r)_\ell}\right)=\prod_r\left(\det\left( k\big|_{H(r)}\right)\right)^\ell=1$$
	as desired.
\end{proof}
	
	Now suppose that $X,X'\in \mathcal{A}$ have the same partition and contain only even parts. If $\det g=1$, they are in the same $\mathrm{SO}_{2n}$-orbits. Suppose that $\det g=-1$ and they are conjugated by another element $h\in \mathrm{SO}_{2n}$. Say $g\cdot X=X' = h\cdot X$ and let $k=g^{-1}h$. Then $\det k=(\det g^{-1})(\det h)=-1$ but this contradicts to Lemma \ref{lemma:stabilizer}. In this case, it means $X,X'$ are conjugated by an element in $\mathrm{O}_{2n}$ of determinant $-1$ only. We have the following theorem:
	
\begin{theorem}
\label{theorem:SO_{2n}}
	Nilpotent $\mathrm{SO}_{2n}$-orbits in $\mathcal{A}$ are parametrized by partitions of $2n$ except that the partitions with only even parts correspond to two orbits.
\end{theorem}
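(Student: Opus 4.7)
The plan is to start from Theorem~\ref{theorem:classify}, which classifies the nilpotent $\mathrm{O}_{2n}$-orbits in $\mathcal{A}$ by partitions of $2n$, and to determine when an $\mathrm{O}_{2n}$-orbit is a single $\mathrm{SO}_{2n}$-orbit and when it splits into two. Because $\mathrm{SO}_{2n}$ has index $2$ in $\mathrm{O}_{2n}$, the $\mathrm{O}_{2n}$-orbit of $X$ is a single $\mathrm{SO}_{2n}$-orbit precisely when the stabilizer $\mathrm{O}_{2n}^X$ contains an element of determinant $-1$, and otherwise splits into exactly two $\mathrm{SO}_{2n}$-orbits in bijection with the cosets $\mathrm{O}_{2n}/(\mathrm{SO}_{2n}\cdot\mathrm{O}_{2n}^X)$. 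So the whole proof reduces to detecting such a stabilizer element in terms of the Jordan type of $X$.

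To do this I would attach to $X$ an $\mathfrak{sl}_2$-triple $(H,X,Y)$ with $H\in\mathfrak{g}_B$ and $Y\in\mathcal{A}$ via Lemma~\ref{lemma:JMgrading}, yielding the decomposition $V=\bigoplus_r M(r)$ of \eqref{eq:decomp} together with the nondegenerate symmetric forms $(\cdot,\cdot)_r$ on each highest weight space $H(r)$ from Lemma~\ref{lemma:newform}. The centralizer of the full triple inside $\mathrm{O}_{2n}$ preserves each $M(r)$, commutes with the $\mathfrak{sl}_2$-action, and is therefore determined by its restrictions to the $H(r)$, producing an embedding $\prod_r\mathrm{O}\!\left(H(r),(\cdot,\cdot)_r\right)\hookrightarrow \mathrm{O}_{2n}^X$. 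The determinant bookkeeping already carried out in the proof of Lemma~\ref{lemma:stabilizer} then gives $\det_V k=\prod_r\bigl(\det k|_{H(r)}\bigr)^{r+1}$ for any such centralizer element $k$.

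If the partition of $X$ has an odd part, there is some $r$ with $r+1$ odd and $H(r)\neq 0$; taking $k$ to act by $-\mathrm{id}$ on that $H(r)$ and by the identity elsewhere yields $k\in\mathrm{O}_{2n}^X$ with $\det_V k=-1$, so the $\mathrm{O}_{2n}$-orbit is a single $\mathrm{SO}_{2n}$-orbit. If on the contrary every part is even, then every $r$ with $H(r)\neq 0$ has $r+1$ even, so the above product is always $+1$; combining this with the semisimple reduction of Lemma~\ref{lemma:stabilizer} one concludes $\mathrm{O}_{2n}^X\subset\mathrm{SO}_{2n}$, so the $\mathrm{O}_{2n}$-orbit decomposes into exactly two disjoint $\mathrm{SO}_{2n}$-orbits, interchanged by any representative of the nontrivial coset of $\mathrm{SO}_{2n}$ in $\mathrm{O}_{2n}$.

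The main technical point is the even-part case, which is essentially Lemma~\ref{lemma:stabilizer} and whose real content is the reduction to semisimple stabilizer elements together with the determinant computation on each $M(r)$. In the odd-part case the only thing to check is that the constructed sign-flip element truly lies in $\mathrm{O}_{2n}$, which amounts to the orthogonality of distinct isotypic components of the $\mathfrak{sl}_2$-action under $B$ together with the invariance of $B$ under $\pm\mathrm{id}$ on a single $H(r)$; both are immediate from the proof of Lemma~\ref{lemma:newform}.
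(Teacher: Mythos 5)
Your overall plan is essentially the paper's argument, recast in the equivalent language of stabilizers: the paper shows directly that two elements of the same Jordan type with an odd part are $\mathrm{SO}_{2n}$-conjugate by modifying a conjugating element $g$ of determinant $-1$, and invokes Lemma~\ref{lemma:stabilizer} when all parts are even, which is exactly what you do after translating via ``$\mathrm{O}_{2n}$-orbit stays a single $\mathrm{SO}_{2n}$-orbit iff $\mathrm{O}_{2n}^X\not\subset\mathrm{SO}_{2n}$''. Your treatment of the even-part case is fine and matches the paper.

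The odd-part case, however, has a genuine gap. With your (correct) formula $\det_V k=\prod_r(\det k|_{H(r)})^{r+1}$, taking $k$ to act by $-\mathrm{id}$ on $H(r_0)$ (with $r_0+1$ odd) gives $\det k|_{H(r_0)}=(-1)^{\dim H(r_0)}$ and hence $\det_V k=(-1)^{(r_0+1)\dim H(r_0)}$. This is $+1$, not $-1$, whenever the multiplicity $\dim H(r_0)$ of the odd part $r_0+1$ is even; that happens already for $X=0$ (partition $[1^{2n}]$), and for partitions such as $[3,3,1,1]$ where every odd part has even multiplicity. So the element you construct can fail to witness $\det=-1$, and the step ``yields $k\in\mathrm{O}_{2n}^X$ with $\det_V k=-1$'' is false as stated. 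The fix is small: since $(\cdot,\cdot)_{r_0}$ is nondegenerate symmetric by Lemma~\ref{lemma:newform}, the group $\mathrm{O}\bigl(H(r_0),(\cdot,\cdot)_{r_0}\bigr)$ contains an element of determinant $-1$ (a reflection); feeding that through your embedding gives $k\in\mathrm{O}_{2n}^X$ with $\det_V k=(-1)^{r_0+1}=-1$ because $r_0+1$ is odd. This is in effect what the paper does when it flips the sign of $g$ on a single (nondegenerate) odd-dimensional $\mathfrak{sl}_2$-irreducible $S$ rather than on all of $M(r_0)$.
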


	For each nilpotent element $X$ having the partition $[d_1,d_2,...,d_k]$, we denote the $I_B$-orbits of $X$ by $\mathcal{O}_X$,$\mathcal{O}_{[d_1,d_2,...,d_k]}$, or simply $[d_1,d_2,...,d_k]$. We are now ready to compute the dimension of nilpotent $I_B$-orbits. 

\begin{theorem}\label{theorem:dimension}
	Let $X$ be a nilpotent element in $\mathcal{A}$. Then the dimension of $I_B$-orbit of $X$ is
	$$\dim \mathcal{O}_X=\frac{1}{2}\left(m^2-\sum_i s_i^2\right).$$

\end{theorem}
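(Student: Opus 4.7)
My plan is to reduce the computation to the dimension of the full $\mathrm{GL}(V)$-conjugacy class of $X$ in $\End(V)$, for which the classical formula $\dim[\End(V),X]=m^{2}-\sum_{i}s_{i}^{2}$ is standard (e.g.\ \cite[\S6.1]{CM}). Since the tangent space to $\mathcal{O}_{X}\subset\mathcal{A}$ at $X$ is $[\mathfrak{g}_{B},X]\subseteq\mathcal{A}$, one has $\dim\mathcal{O}_{X}=\dim[\mathfrak{g}_{B},X]$, so it suffices to prove that
\[
\dim[\mathfrak{g}_{B},X]\;=\;\dim[\mathcal{A},X]\;=\;\frac{1}{2}\dim[\End(V),X].
\]

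The first step is to observe that the involution $\sigma(A)=-A^{*}$ of (\ref{equation:sigma}) decomposes $\End(V)=\mathfrak{g}_{B}\oplus\mathcal{A}$ into its $\pm 1$-eigenspaces. Because $X\in\mathcal{A}$, the operator $\mathrm{ad}(X)$ swaps the two summands: $[\mathfrak{g}_{B},X]\subseteq\mathcal{A}$ and $[\mathcal{A},X]\subseteq\mathfrak{g}_{B}$. Hence $[\End(V),X]=[\mathfrak{g}_{B},X]\oplus[\mathcal{A},X]$, and the displayed identity will follow once I show that the two restrictions $\mathrm{ad}(X)\big|_{\mathfrak{g}_{B}}\colon\mathfrak{g}_{B}\to\mathcal{A}$ and $\mathrm{ad}(X)\big|_{\mathcal{A}}\colon\mathcal{A}\to\mathfrak{g}_{B}$ have the same rank.

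For this I would introduce the trace form $\kappa(A,B)=\tr(AB)$ on $\End(V)$. A short computation using $\tr(M^{*})=\tr(M)$ shows that $\kappa$ is $\sigma$-invariant, so the two summands are $\kappa$-orthogonal and both $\kappa|_{\mathfrak{g}_{B}}$ and $\kappa|_{\mathcal{A}}$ are nondegenerate. The $\mathrm{ad}$-invariance identity $\kappa([X,A],B)+\kappa(A,[X,B])=0$ then identifies the two restricted operators as negative transposes of each other under these nondegenerate pairings, and in particular they have equal rank. The main (and quite minor) obstacle is to reconcile (\ref{equation:spaceA}), which places $\mathcal{A}$ inside $\End(V)$ rather than $\mathfrak{sl}(V)$: since scalars commute with $X$ and are $\sigma$-fixed, the $I_{B}$-orbit dimension is insensitive to this choice. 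Notably, the argument runs uniformly in the symmetric and symplectic cases and avoids any explicit manipulation of the partition $[d_{1},\ldots,d_{k}]$.
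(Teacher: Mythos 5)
Your argument is correct, and it takes a genuinely different route from the paper's. The paper computes $\dim\mathfrak{g}_B^X$ directly by decomposing $V=\bigoplus_r M(r)$ via the graded Jacobson--Morozov lemma and counting, block by block, the ways an endomorphism can commute with $X$ while respecting the form $B$; this gives two separate computations (a small difference between the symmetric and symplectic cases, reflected in whether the auxiliary forms $(\cdot,\cdot)_d$ are symmetric or skew). Your approach instead invokes the classical $\mathrm{GL}(V)$-centralizer count $\dim\mathfrak{z}_{\End(V)}(X)=\sum_i s_i^2$, and then shows that $\mathrm{ad}(X)$, which interchanges the $\pm1$-eigenspaces of $\sigma(A)=-A^*$ on $\End(V)$, has equal rank on both halves because the restrictions are negative adjoints of each other under the $\sigma$-invariant, $\mathrm{ad}$-invariant, nondegenerate trace form $\kappa$. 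The $\kappa$-orthogonality of $\mathfrak{g}_B$ and $\mathcal{A}$ and the identity $\kappa([X,A],B)+\kappa(A,[X,B])=0$ do exactly the work you describe, and since $[\mathfrak{g}_B,X]$ and $[\mathcal{A},X]$ lie in complementary eigenspaces, $[\End(V),X]$ is their direct sum, yielding $\dim[\mathfrak{g}_B,X]=\tfrac12\dim[\End(V),X]$ uniformly. One small slip: scalars are not $\sigma$-fixed under $\sigma(A)=-A^*$ (they lie in $\mathcal{A}$, the $(-1)$-eigenspace), but this does not affect the argument since $[\mathbb{C}I,X]=0$ and the centralizer dimension in $\End(V)$ versus $\mathfrak{sl}(V)$ differs by exactly $1$ on both sides, so the final formula is unchanged. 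On balance your proof is shorter, case-free, and isolates the conceptual point (the symmetric-space halving of orbit dimensions), whereas the paper's computation has the side benefit of making the structure of $\mathfrak{g}_B^X$ explicit, which is used implicitly elsewhere in Section~\ref{sect_3}.
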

\begin{proof}
	Suppose that $B$ is symplectic on $\mathbb{C}^m$, $m=2n$. Recall that we have the decomposition (\ref{eq:decomp}). For each $Z\in \mathfrak{g}_B^X$, we investigate how $Z$ sends $M(d)$. We consider $Z(M(d))$ and project it onto $M(e)$ for each $e\geq 0$. Since $Z$ and $X$ commute, by theory of representations of $\mathfrak{sl}_2$, the projection of $Z(M(d))$ onto $M(e)$ is uniquely determined by a linear map $L(d)\to M(e)$ where $L(d)$ is the lowest weight space in $M(d)$. 
	
	Suppose that a linear map from $L(d)$ to $M(e)$ is determined for $d<e$. Since $Z$ is skew-adjoint, $Z^*=-Z$ where $Z^*$ is a conjugate transpose of $Z$. Hence the projection of $Z(M(e))$ onto $M(d)$ is uniquely determined. Therefore, we only consider the case $d\leq e$.
	
	Now, consider the case $d<e$. For $v\in L(d)$, $X^{d+1}v=0$ and then $X^{d+1}Zv=ZX^{d+1}v=0$. Hence the nonzero $M(e)$-component of $Zv$ must lie in the weight spaces 
	$$M(e)_{e-2d}\oplus M(e)_{e-2(d-1)}\oplus \cdots \oplus M(e)_{e-2}\oplus M(e)_{e}$$ 
	where $M(e)_k$ is the $k$-weight space in $M(e)$. 
	Note that $r_{d+1}=\dim L(d)$. Therefore the set of all linear maps from $L(d)$ to $M(e)$ forms a vector space of dimension $(d+1)r_{d+1}r_{e+1}$.
	
	Assume that $d=e$. If $Z$ sends $L(d)$ to $H(d)$, then
	we define a new bilinear form $(\cdot,\cdot)_d$ on $L(d)$ given by $(u,v)_d=\langle u,Zv\rangle$. It can be checked $(\cdot, \cdot)_d$ is symmetric and completely determine the action of $Z$ on $L(d)$. The set of all such $(\cdot, \cdot)_d$ forms a vector space of dimension $\frac{1}{2}r_{d+1}(r_{d+1}+1)$. If $Z$ sends $L(d)$ to $(d-2)$-weight space in $M(d)$, we define the new form by $(u,v)_{d-2}=\langle u,XZv\rangle$. Again, this form is symmetric and completely determine the action of $Z$ on $L(d)$. Continue this process up to the case $Z$ sends $L(d)$ to itself.
	We obtain
	\begin{align*}
	\dim\mathfrak{g}_B^X&=\sum_{d\geq 0}\left[(d+1)\left(\sum_{e>d}r_{d+1}r_{e+1}\right)+\dfrac{d+1}{2}r_{d+1}(r_{d+1}+1)\right]\\
	&=\left[r_1(r_2+r_3+\cdots)+\dfrac{1}{2}r_1(r_1 +1)\right]+\left[2r_2(r_3+r_4+\cdots)+\dfrac{2}{2}r_2(r_2 +1)\right]\\
	&\hspace{2 cm} +\left[3r_3(r_4+r_5+\cdots)+\dfrac{3}{2}r_3(r_3 +1)\right]+\cdots\\
	&=\left[\dfrac{1}{2}r_1(r_1+2r_2+2r_3+...)+\dfrac{1}{2}r_1\right]+\left[\dfrac{2}{2}r_2(r_2+2r_3+2r_4+...)+\dfrac{2}{2}r_2\right]\\
	&\hspace{2 cm} +\left[\dfrac{3}{2}r_3(r_3+2r_4+2r_5+...)+\dfrac{3}{2}r_3\right]+\cdots\\
	&=\left[\dfrac{1}{2}(s_1-s_2)(s_1+s_2)+\dfrac{1}{2}r_1\right]+\left[\dfrac{2}{2}(s_2-s_3)(s_2+s_3)+\dfrac{2}{2}r_2\right]\\
	&\hspace{2 cm} +\left[\dfrac{3}{2}(s_3-s_4)(s_3+s_4)+\dfrac{3}{2}r_3\right]+\cdots\\
	&=\dfrac{1}{2}\sum_i s_i^2+\dfrac{1}{2}(r_1+2r_2+3r_3+\cdots)+\cdots\\
	&=\dfrac{1}{2}\sum_i s_i^2+\dfrac{1}{2}\sum_i s_i.\\
	&=n+\dfrac{1}{2}\sum_i s_i^2
	\end{align*}
	and hence
	$$\dim\mathcal{O}_X=\dim \mathfrak{g}_B-\dim\mathfrak{g}_B^X=(2n^2+n)-\left(n+\dfrac{1}{2}\sum_i s_i^2\right)=\dfrac{m^2}{2}-\frac{1}{2}\sum_i s_i^2.$$
	
	If $B$ is symmetric, the arguement is similar except that the form $(u,v)_d$ is symplectic and hence the vector space consisting of such forms $(\cdot,\cdot)_d$ has dimension $\frac{1}{2}r_{d+1}(r_{d+1}-1)$.
\end{proof}
\begin{remark}
The dimension of $I_B$-orbits can also be obtained from \cite[3.1.c, 3.2.b]{Se}, where the formulae are not uniform and the proofs are also omitted. 
\end{remark}

	The closure relation on the set of nilpotent orbits in $\mathcal{A}$ is given by 
	$$\mathcal{O}_Y \preceq \mathcal{O}_X \text{ if }\mathcal{O}_Y \subset \overline{\mathcal{O}}_X$$
	for nilpotent elements $X,Y \in \mathcal{A}$.
	Given two partitions $\bar{d}=[d_1,...,d_N], \bar{f}=[f_1,...,f_N]$ of $N$ (put some $d_i,f_i=0$ if needed). We say that $\bar{d}$ dominates $\bar{f}$, denoted by $\bar{d} \succeq \bar{f}$ if
	\begin{center}
		$d_1\geq f_1$\\
		$d_1+d_2\geq f_1+f_2$\\
		$\vdots$\\
		$d_1+...+d_N \geq f_1+...+f_N.$
	\end{center}
	
\begin{theorem}
	Let $X,Y$ be nilpotent elements in $\mathcal{A}$ having partition $\bar{d},\bar{f}$, respectively.
	Then $\mathcal{O}_{\bar{d}} \succeq \mathcal{O}_{\bar{f}}$ if and only if $\bar{d}$ dominates $\bar{f}$. 	
\end{theorem}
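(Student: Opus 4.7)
The plan combines Gerstenhaber's classical theorem on the closure order of conjugacy classes in $\mathfrak{gl}(V)$ with an explicit minimal-degeneration argument carried out inside $\mathcal{A}$. For the ``only if'' direction, since $\mathcal{A}\subset \mathfrak{gl}(V)$ and $I_B\subset GL(V)$, the inclusion $\mathcal{O}_{\bar{f}}\subset \overline{\mathcal{O}_{\bar{d}}}$ inside $\mathcal{A}$ forces $Y\in \overline{GL(V)\cdot X}$ inside $\mathfrak{gl}(V)$. Since $GL(V)$-conjugacy classes of nilpotent matrices are indexed by Jordan type and their closure order is exactly dominance of partitions (Gerstenhaber), this immediately yields $\bar{f}\preceq \bar{d}$.

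For the ``if'' direction I would induct on the dominance order, reducing to the case of a minimal cover $\bar{d}\gtrdot \bar{f}$ where the Young diagram of $\bar{f}$ is obtained from that of $\bar{d}$ by moving a single box down one row. Fix a normal form for $X$ adapted to the $\mathfrak{sl}_2$-decomposition $V=\bigoplus_r M(r)$ of (\ref{eq:decomp}), with $B$ put into standard shape on each highest-weight space $H(r)$ as guaranteed by Lemma \ref{lemma:newform}. A single-box move only affects two isotypic components, so one can localize the problem to this sub-representation and write down an explicit one-parameter family $X_t\in \mathcal{A}$ ($t\in \mathbb{C}$) with $X_t\in \mathcal{O}_{\bar{d}}$ for $t\neq 0$ and $\lim_{t\to 0} X_t\in \mathcal{O}_{\bar{f}}$. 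The family is modelled on the classical Kraft--Procesi degeneration but adjusted so that self-adjointness under $B$ is maintained throughout.

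The main obstacle is preserving self-adjointness while forcing the exact rank drop required by the minimal cover; the construction must be split according to whether $B$ is symplectic or symmetric and by the parities of the two parts involved in the box move. In the $\mathrm{SO}_{2n}$-setting one must further check that minimal degenerations reach \emph{both} of the two orbits attached to any very even partition sitting below $\bar{d}$ (Theorem \ref{theorem:SO_{2n}}); this is handled by exhibiting two sign choices in the rank-one modification that produce deformations landing in the two distinct $\mathrm{SO}_{2n}$-orbits. Once these per-cover constructions are established, iterating along any maximal chain in the dominance order from $\bar{d}$ to $\bar{f}$ completes the proof.
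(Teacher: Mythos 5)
Your ``only if'' direction is correct and is the standard observation: the $I_B$-orbit $\mathcal{O}_{\bar d}$ sits inside the $\mathrm{GL}(V)$-conjugacy class of the same Jordan type, so the closure relation in $\mathcal{A}$ forces the closure relation in $\mathfrak{gl}(V)$, and Gerstenhaber's theorem then gives dominance. (Note the paper itself does no work here: its proof is the single citation to Ohta's Theorem 1, so your argument is genuinely a proof from scratch.)

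The ``if'' direction as written has a real gap in the symplectic case. By Theorem \ref{theorem:classify}, the nilpotent $\mathrm{Sp}_{2n}$-orbits in $\mathcal{A}$ are parametrized by partitions of $2n$ in which \emph{every part has even multiplicity}. The sub-poset of such partitions under dominance does \emph{not} have single-box moves as its cover relations: for instance $[2,2]$ covers $[1^4]$ within this sub-poset, because the only intermediate partition $[2,1,1]$ in the full dominance order is not of the required form. Any maximal chain of single-box moves from $\bar d$ to $\bar f$ therefore passes through partitions that correspond to \emph{no} element of $\mathcal{A}$, and there is no curve $X_t\in\mathcal{A}$ whose Jordan type is such an invalid partition. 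Your induction step, which needs a degeneration whose generic and special fibers both lie in $\mathcal{A}$, collapses. What is actually required in the symplectic case is a degeneration that moves a \emph{pair} of boxes at once (equivalently, works with the natural bijection between these partitions and partitions of $n$ obtained by halving multiplicities, where covers are again single-box moves); this is still in the Kraft--Procesi spirit but your outline does not account for it.

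Separately, your remark about checking that minimal degenerations reach both $\mathrm{SO}_{2n}$-orbits attached to a very even partition is a red herring: the theorem (as stated in the paper, with notation set just above it) concerns the $I_B$-orbits, i.e.\ $\mathrm{O}_{2n}$-orbits, and Theorem \ref{theorem:classify} already attaches a unique $\mathrm{O}_{2n}$-orbit to every partition of $2n$. The splitting into two orbits in Theorem \ref{theorem:SO_{2n}} concerns the index-two subgroup $\mathrm{SO}_{2n}$ and is not what the closure-order statement is about. So that part of your outline is unnecessary work, while the genuine difficulty (invalid intermediate partitions in the symplectic case) is not addressed.
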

\begin{proof}
	See \cite[Theorem 1]{Oh}.
\end{proof}
	For example, all nilpotent $\mathrm{Sp}_{10}$-orbits in $\mathcal{A}\subset \mathfrak{sl}_{10}$ are
	$$\mathcal{O}_{[5^2]}\succeq \mathcal{O}_{[4^2,1^2]}\succeq \mathcal{O}_{[3^2,2^2]}\succeq \mathcal{O}_{[3^2,1^4]}\succeq \mathcal{O}_{[2^4,1^2]}\succeq \mathcal{O}_{[2^2,1^6]}\succeq \mathcal{O}_{[1^{10}]}.$$
	The dimensions are 40, 36, 32, 28, 24, 16, 0, respectively.


\section{The connection between Schubert cells and nilpotent $K$-orbits}
\label{section:Schubert_Nilpotent}
	The goal of this section is to show that for any small $\bar{\lambda}$, $\mathcal{M}_{\bar{\lambda}}$ is sent to the nilpotent cone $\mathcal{N}_\mathfrak{p}$ by the map $\pi$, and show that how each $\mathcal{M}_{\bar{\lambda}}$ is sent to nilpotent $K$-orbits in the $\mathcal{N}_\mathfrak{p}$. Theorem \ref{theorem:cellandorbit} describes the image $\pi(\mathcal{M}_{\bar{\lambda}})$ where the proofs are provided by case-by-case consideration in this section.
	
	Let $X_N$ be the type of Dynkin diagram of $G$ and $\sigma$ the diagram automorphism on $G$ of order $r$, denoted by the pair $(X_N,r)$. We consider the cases $(X_N,r)=(A_{2\ell},2),(A_{2\ell-1},2)$, and $(D_{\ell+1},2)$. Then $H=(\check{G})^\sigma$ is either of type $B_\ell$ or $C_\ell$. We make the following labelling for simple roots $\gamma_i$:  	\begin{equation}\label{equation:labelgamma}
	\begin{cases}
	\gamma_1=\gamma_{\{1,2\ell-1\}},\dots,\gamma_{\ell-1}=\gamma_{\{\ell-1,n+1\}},\gamma_{\ell}=\gamma_{\{\ell \}} &\quad   (X_N, r)=(A_{2\ell-1}, 2);\\
	\gamma_1=\gamma_{\{1,2\ell \}},\dots,\gamma_{\ell-1}=\gamma_{\{\ell-1,\ell+2\}},\gamma_\ell=\gamma_{\{\ell,\ell+1\}} &\quad   (X_N, r)=(A_{2\ell}, 2);\\
	\gamma_1=\gamma_{\{1\}},\dots,\gamma_{\ell-1}=\gamma_{\{\ell-1\}},\gamma_\ell=\gamma_{\{\ell,\ell+1\}} &\quad   (X_N, r)=(D_{\ell+1}, 2).\\
	\end{cases}
	\end{equation}	
	This labelling of vertices of type $B_\ell$ and $C_\ell$ agrees with the labelling in \cite[TABLE Fin, p.53]{Ka}. 
Then the highest short root $\gamma_{0}$ of $H$ can be described in the following table. 

\begin{table}[htb]
	\begin{tabular}{@{}ccccc@{}}
		\cmidrule(r){1-5}
		$(X_N,r)$  & $G$ & $H$  & Simple roots of $H$  & Highest short root $\gamma_0$ of $H$ \\ \cmidrule(r){1-5}
		$(A_{2\ell},2)$   & $\mathrm{SL}_{2\ell+1}$ & $\mathrm{PSO}_{2\ell+1}$ & \thead{$\gamma_i=\bar{\check{\alpha}}_i=\bar{\check{\alpha}}_{2\ell-i+1}$,\\ $i=1,...,\ell$} & $\gamma_1+\gamma_2+...+\gamma_\ell.$                   \\
		$(A_{2\ell-1},2)$ & $\mathrm{SL}_{2\ell}$   & $\mathrm{PSp}_{2\ell}$   &  \thead{$\gamma_i=\bar{\check{\alpha}}_i=\bar{\check{\alpha}}_{2\ell-i}$,\\ $i=1,...,\ell$}                                                                                & $\gamma_1+2\gamma_2+...+2\gamma_{\ell-1}+\gamma_\ell.$ \\
		$(D_{\ell+1},2)$  & $\mathrm{Spin}_{2\ell+2}$ & $\mathrm{PSO}_{2\ell+1}$ & \thead{$\gamma_i= \bar{\check{\alpha}}_i, i=1,...,\ell-1$\\ $\gamma_\ell=\bar{\check{\alpha}}_\ell =\bar{\check{\alpha}}_{\ell+1}$}    & $\gamma_1+\gamma_2+...+\gamma_\ell.$                   \\ \cmidrule(r){1-5}
	\end{tabular}
\end{table}

From Section \ref{subsection:Root datum}, we can identify the weight lattice of $H$ with $X_*(T)_\sigma$, and the set of dominant weights of $H$ with $X_*(T)^+_\sigma$. Then, from the construction of root system of classical Lie algebras given in \cite[\S 12]{Hu2}, we can make the following identifications: 
	\[X_{*} (T)_{\sigma}\cong
	 \begin{cases}\label{equation:coweight}
	\mathbb{Z}^\ell & \text{if } H=B_\ell;\\
	\{(a_1,...,a_\ell)\in\mathbb{Z}^\ell\mid a_1+ \cdots + a_\ell\in 2\mathbb{Z}\} & \text{if } H=C_\ell
	\end{cases}
	\]
	and 
	$$X_{*} (T)_{\sigma}^+\cong\{(a_1,...,a_\ell)\in X_{*} (T)_{\sigma}\mid a_1\geq \cdots \geq a_\ell\geq 0\}$$
	for any cases of $H$. This identification preserves the relation on $X_{*} (T)_{\sigma}^+$ and the dominance relation on $\{(a_1,...,a_\ell)\in X_{*} (T)_{\sigma}\mid a_1\geq \cdots \geq a_\ell\geq 0\}$.
	
In the Table \ref{table:tuples}, we can further make the following identifications for simple roots and fundamental weights of $H$. Those fundamental weights follows from \cite[Table 1., p.69]{Hu2}.
\begin{table}
	\begin{tabular}{|c|c|c|c|}
		\hline
		$H$      & Simple roots                                                                                                                                & $\gamma_0$ & Fundamental weights \\ \hline
		$B_\ell$ & \thead{$\gamma_1=(1,-1,0,0,...,0)$\\ $\gamma_2=(0,1,-1,0,...,0)$\\ $\vdots$ \\ $\gamma_{\ell-1}=(0,0,0,...,1,-1)$\\ $\gamma_\ell=(0,0,0,...,0,1)$} &  $(1,0,0,...,0,0)$  &  \thead{$\omega_j=(1^j 0^{\ell-j})$, $j=0,...,\ell-1$\\ $\omega_\ell=(\dfrac{1}{2},\dfrac{1}{2},...,\dfrac{1}{2})$}  \\ \hline
		$C_\ell$ & \thead{$\gamma_1=(1,-1,0,0,...,0)$\\
			$\gamma_2=(0,1,-1,0,...,0)$\\
			$\vdots$\\
			$\gamma_{\ell-1}=(0,0,0,...,1,-1)$\\
			$\gamma_\ell=(0,0,0,...,0,2).$} &  $(1,1,0,...,0,0)$  &   $\omega_j=(1^j 0^{\ell-j}),j=0,...,\ell$  \\ \hline
	\end{tabular}
	\caption{\label{table:tuples}Simple roots, highest short root, and fundamental weights of $H$ in term of tuples}
\end{table}

The following lemma is well-known, cf.\cite{AH}. We give a self-contained proof here. 
\begin{lemma}\label{lemma:smalldom}
	All small dominant weights of $H$ are
	\begin{enumerate}
		\item $\omega_j=(1^j 0^{\ell-j})$, $j=0,...,\ell-1$, $2\omega_\ell=(1,1,...,1)$, if $H$ has the type $B_\ell$.
		\item $\omega_1+\omega_{2j+1}=(21^{2j} 0^{\ell-2j-1})$, $j=0,...,\lfloor \frac{\ell-1}{2}\rfloor$\\
		$\omega_{2j}=(1^{2j} 0^{\ell-2j})$, $j=0,...,\lfloor \frac{\ell}{2}\rfloor$, if $H$ has the type $C_\ell$.
	\end{enumerate}
\end{lemma}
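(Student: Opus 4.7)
The plan is to translate the abstract smallness condition $\bar{\lambda}\not\succeq 2\gamma_0$ into a concrete inequality on the coordinates $(a_1,\dots,a_\ell)$ of $\bar{\lambda}$, using the realization of $X_*(T)_\sigma$ and its simple roots recorded in Table \ref{table:tuples}. Once this translation is made, the enumeration of small dominant weights is an elementary combinatorial exercise.

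First I would expand $\bar{\lambda}-2\gamma_0=\sum_{i=1}^{\ell}c_i\gamma_i$ in the simple root basis and solve for the $c_i$ by telescoping. In type $B_\ell$ this yields $c_k=a_1+\cdots+a_k-2$ for $k=1,\dots,\ell$; in type $C_\ell$ it yields $c_1=a_1-2$, $c_k=a_1+\cdots+a_k-4$ for $2\le k\le \ell-1$, and $c_\ell=\tfrac{1}{2}(a_1+\cdots+a_\ell-4)$. The condition $\bar{\lambda}\succeq 2\gamma_0$ is then equivalent to all $c_i$ being non-negative integers. Integrality is automatic in type $B_\ell$, while in type $C_\ell$ it follows from the parity condition $\sum a_i\in 2\mathbb{Z}$ cutting out the weight lattice of $\PSp_{2\ell}$.

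Since the partial sums $a_1+\cdots+a_k$ are non-decreasing, the system of inequalities collapses to a single binding one. For $B_\ell$ this is $a_1\ge 2$, so $\bar{\lambda}$ is small iff $a_1\le 1$; together with dominance this forces $\bar{\lambda}=(1^j,0^{\ell-j})$ for some $0\le j\le\ell$, which gives precisely $\omega_j$ for $j<\ell$ and $2\omega_\ell$ for $j=\ell$. For $C_\ell$, because $a_1\ge a_2$ the inequality $a_1+a_2\ge 4$ automatically implies $a_1\ge 2$, so the binding condition is just $a_1+a_2\ge 4$; hence $\bar{\lambda}$ is small iff $a_1+a_2\le 3$.

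Finally I would enumerate the dominant weights of $\PSp_{2\ell}$ satisfying $a_1+a_2\le 3$ and $\sum a_i\in 2\mathbb{Z}$ by splitting on $a_1\in\{0,1,2,3\}$: the case $a_1=3$ forces $\bar{\lambda}=(3,0,\dots,0)$, whose coordinate sum is odd and is therefore excluded by the parity constraint, while the remaining cases yield $(1^{2j},0^{\ell-2j})=\omega_{2j}$ and $(2,1^{2j},0^{\ell-1-2j})=\omega_1+\omega_{2j+1}$ in the stated index ranges. The only subtle point anywhere in the argument is this parity constraint in type $C_\ell$, which is precisely what forces the indices to be even and produces the two separate families in the statement; the rest is routine bookkeeping.
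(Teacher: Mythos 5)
Your proposal is correct and follows essentially the same route as the paper: translate $\bar{\lambda}\not\succeq 2\gamma_0$ into a coordinate inequality on $a_1$ (type $B_\ell$) or on $a_1+a_2$ (type $C_\ell$), then enumerate dominant tuples satisfying it together with the parity constraint in type $C_\ell$. You are a bit more explicit than the paper in deriving the coefficients $c_i$ by telescoping and in noting that the case $a_1=3$ is excluded by the parity condition---a point the paper leaves implicit when it asserts $a_1\leq 2$---but the underlying argument is the same.
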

\begin{proof}	Suppose that $H$ has the type $B_\ell$. The highest short root is $\gamma_0=(1,0,...,0)$. By definition, a dominant weight $(a_1,...,a_\ell)\in X_{*} (T)_{\sigma}^+$ is small if and only if  $(a_1,...,a_\ell)\nsucceq (2,0,...,0)$ which is equivalent to $a_1\leq 1$. This proves the first part.
	
	Now assume that $H$ has the type $C_\ell$. The highest short root is $\gamma_0=(1,1,...,0)$. Let $(a_1,...,a_\ell)\in X_{*} (T)_{\sigma}^+$ be a small dominant weight. Then $(a_1,...,a_\ell)\nsucceq (2,2,...,0)$ and so $a_1\leq 2$. If $a_1=1$, then $(a_1,...,a_\ell)=(1^{2j} 0^{\ell-2j})$. If $a_1=2$, then $a_2<2$ and hence $(a_1,...,a_\ell)=(21^{2j} 0^{\ell-2j-1})$.
\end{proof}

 Let $\bar{\mu}$ be the maximal element among all small dominant weights of $H$, then
	$$\mathcal{G}r_{\mathrm{sm}}=\coprod_{{\bar{\lambda}}\preceq \bar{\mu}, \\\bar{\lambda} \text{  small}}\mathcal{G}r_{\bar{\lambda}}=\overline{\mathcal{G}r}_{\bar{\mu}}.$$
	Since $\mathcal{G}r_{\mathrm{sm}}$ is irreducible and $\mathcal{M}$ is an open subset of $\mathcal{G}r_{sm}$, $\mathcal{M}$ is irreducible. 

The following theorem is the main result of this section. 
\begin{theorem}\label{theorem:cellandorbit}
If $\bar{\lambda}$ is small, then $\pi(\mathcal{M}_{\bar{\lambda}} )$ is contained in $\mathcal{N}_{\mathfrak{p}}$. Moreover, the image $\pi(\mathcal{M}_{\bar{\lambda}} )$ can be described as the union of nilpotent orbits as the following table:  \vspace{0.2em}
\begin{equation*}
	\begin{tabular}{|c|c|c|}
		\hline
		$(X_N,r)$ & Small dominant weight $\overline{\lambda}$ of $H$ & Orbits in $\pi(\mathcal{M}_{\bar{\lambda}})$ \\ \hline
		$(A_{2\ell},2)$ &  $(1^j 0^{\ell -j}), j=0,1,\dots,\ell$ & $[2^j 1^{2\ell-2j+1}]$ \\ \hline
		\multirow{5}{*}{$(A_{2\ell-1},2)$} & $(1^{2j}0^{\ell-2j}), j=0,1,\dots,\lfloor \frac{\ell}{2}\rfloor$ & $[2^{2j}1^{2\ell-4j}]$ \\ \cline{2-3} 
		& $(20^{\ell-1})$ & $0,[2^2 1^{2\ell-4}]$ \\ \cline{2-3} 
		& $(21^20^{\ell-3})$ & $[2^2 1^{2\ell-4}],[2^4 1^{2\ell-8}],[3^2 1^{2\ell-6}]$ \\ \cline{2-3}
		& $(21^{2j}0^{\ell-2j-1}),j=2,\dots,\lfloor \frac{\ell-3}{2}\rfloor$ & \thead{$[2^{2j}1^{2\ell-4j}],[2^{2j+2}1^{2\ell-4j-4}],$\\$[3^2 2^{2j-2} 1^{2\ell-4j-2}],[3^2 2^{2j-4} 1^{2\ell-4j+2}]$} \\ \cline{2-3} 
		& \multirow{2}{*}{$(21^{2\lfloor \frac{\ell-1}{2}\rfloor}0^{\ell-2\lfloor \frac{\ell-1}{2}\rfloor-1})$} & \thead{$[2^{\ell-2}1^{4}],[2^{\ell}],$\\$[3^2 2^{\ell-4} 1^{2}],[3^2 2^{\ell-6} 1^4]$}, if $\ell$ is even \\ \cline{3-3} 
		&  & \thead{$[2^{\ell-1}1^{2}]$,\\$[3^2 2^{\ell-3}],[3^2 2^{\ell-5} 1^4]$}, if $\ell$ is odd \\ \hline
		\multirow{2}{*}{$(D_{\ell+1},2)$}  & \multirow{2}{*}{$(1^j 0^{\ell-j}),j=0,1,\dots,\ell$}  & 
		$0$, if $j=0$ \\ \cline{3-3} &    & $0,[31^{2\ell-1}]$, if $j$ is even, $j\geq 2$ \\ \cline{3-3} 
		&                       & $[31^{2\ell-1}]$, if $j$ is odd \\ \hline   
	\end{tabular}
\end{equation*}  
where the nilpotent orbit $[a_{1}^{i_1},...,a_{r}^{i_r}]$ in the above table means empty set if the associated partition is invalid for some small $\ell$. 
\end{theorem}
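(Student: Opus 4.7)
The proof must proceed case by case through the three pairs $(A_{2\ell},2)$, $(A_{2\ell-1},2)$, and $(D_{\ell+1},2)$, since the symmetric space $\mathfrak{p}$, the fixed-point group $K = G^\sigma$, and the combinatorics of small weights differ in each case. In each case my plan is to fix a concrete realization of $\mathfrak{p}$ as a space of self-adjoint maps in the sense of Section \ref{sect_3}, and then produce, for each small $\bar{\lambda}$, an explicit representative $g_{\bar{\lambda}}(t) \in G(\mathcal{O}^-)^\sigma$ whose class lies in $\mathcal{M}_{\bar{\lambda}}$. The recipe mirrors the proof of Lemma \ref{lemma:highest}: factor $\bar{\lambda}$ as a sum of pieces corresponding to individual positive roots of $H$, use the $\sigma$-invariant $\mathrm{SL}_2$-embeddings $\phi\circ\varphi$ built from $\sigma$-orbits of simple roots, and pull back explicit matrices in $\mathrm{SL}_2(\mathcal{O}^-)$ to $G(\mathcal{O}^-)^\sigma$.

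Given $g_{\bar{\lambda}}(t)$, I would compute $\pi(g_{\bar{\lambda}}(t)) \in \mathfrak{p}$ from its definition: reduce modulo $t^{-2}$, identify the kernel of $G(\mathbb{C}[t^{-1}]/(t^{-2})) \to G$ with $\mathfrak{g} \otimes t$, and take the $\sigma$-equivariant projection to $\mathfrak{p}$. This produces an explicit matrix in the symmetric space whose Jordan partition can be read off directly, placing $\pi(g_{\bar{\lambda}}(t))$ in a definite nilpotent $K$-orbit appearing in the table. By $K$-equivariance of $\pi$, the full $K$-orbit is then contained in $\pi(\mathcal{M}_{\bar{\lambda}})$.

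To match the entire image $\pi(\mathcal{M}_{\bar{\lambda}})$ with the list in the table, I would combine two ingredients. First, irreducibility: since $\mathcal{G}r_{\mathrm{sm}} = \overline{\mathcal{G}r}_{\bar{\mu}}$ is irreducible for the maximal small $\bar{\mu}$, every $\mathcal{M}_{\bar{\lambda}}$ is irreducible with $\dim \mathcal{M}_{\bar{\lambda}} = \langle 2\rho, \bar{\lambda}\rangle$. Comparing this with the orbit-dimension formula of Theorem \ref{theorem:dimension}, and using Proposition \ref{prop2.4} to bound candidate orbits through smallness, forces the $K$-orbits hit to come from a restricted list. Second, for single-orbit entries (most of the $A_{2\ell}$ and $D_{\ell+1}$ rows), the dimension count immediately selects a unique candidate; for multi-orbit entries in $(A_{2\ell-1},2)$, I would introduce auxiliary parameters into $g_{\bar{\lambda}}(t)$ and explicitly exhibit, as the parameters vary, representatives landing in each listed orbit. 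The involution $\iota$ of Lemma \ref{lemma:iota} preserves $\mathcal{M}_{\bar{\lambda}}$ and may impose further constraints pairing orbits of dual partitions. Nilpotency $\pi(\mathcal{M}_{\bar{\lambda}}) \subseteq \mathcal{N}_{\mathfrak{p}}$ then follows a posteriori from the orbit identification, since every orbit in the table is nilpotent.

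The main obstacle is the delicate multi-orbit case $(A_{2\ell-1},2)$ with $\bar{\lambda} = (2, 1^{2j}, 0^{\ell-2j-1})$ for $j \geq 1$, where up to four distinct nilpotent orbits of differing dimensions appear in $\pi(\mathcal{M}_{\bar{\lambda}})$. Proving that \emph{exactly} these orbits arise requires producing essentially distinct families of parameters in $g_{\bar{\lambda}}(t)$ whose $\pi$-images hit each listed orbit, and then checking a sharp dimension bound on the union of candidate orbits against $\dim \mathcal{M}_{\bar{\lambda}}$ to rule out further orbits. The boundary degeneracy for small $\ell$, where certain listed partitions become invalid, is absorbed by the convention stated at the end of the theorem.
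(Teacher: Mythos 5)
Your outline omits the tool that actually makes the paper's argument close: Lemma \ref{lemma:miracle}, derived from the Achar--Henderson rank lemma (Lemma \ref{lemma:miracleAH}). That lemma shows that every $g(t)\cdot e_0 \in \mathcal{M}_{\bar{\lambda}}$ has a definite polynomial form $I + xt^{-1}$ (resp.\ $I + xt^{-1} + yt^{-2}$) and translates membership in $\mathcal{M}_{\bar{\lambda}}$ into sharp rank constraints on $x$ and $y$. Combined with the involution $\iota$ of Lemma \ref{lemma:iota} and, in the $(A_{2\ell-1},2)$ case, with the $\mathcal{M}^{\mathrm{I}}/\mathcal{M}^{\mathrm{II}}$ dichotomy of Lemma \ref{lemma:notfix}--\ref{lem4.9} and Lemma \ref{lemma:adjointmap}, this pins down the Jordan partition of $\pi(g(t)\cdot e_0)$ for \emph{every} element of $\mathcal{M}_{\bar{\lambda}}$, not merely for a constructed representative. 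Your plan has only the ``representatives + dimension count'' half of this argument, and the dimension count cannot close the gap: the fibers of $\pi$ are positive-dimensional (they contain entire order-$2$ nilpotent varieties, cf.\ Propositions \ref{prop:fiber1} and \ref{prop:fiber2}), so $\dim \mathcal{M}_{\bar{\lambda}} = \langle 2\rho, \bar{\lambda}\rangle$ gives only an upper bound on $\dim \pi(\mathcal{M}_{\bar{\lambda}})$, and it cannot select the dense orbit, rule out extra orbits of equal or smaller dimension, nor exclude lower strata such as the zero orbit from $\pi(\mathcal{M}_{(21^2 0^{\ell-3})})$.

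Two smaller issues. First, Proposition \ref{prop2.4} constrains $\bar{\lambda}$, not the candidate $K$-orbits; it does not serve the filtering role you assign it. Second, inferring nilpotency of the image ``a posteriori from the orbit identification'' is circular: the orbit identification presupposes that $\pi(\mathcal{M}_{\bar{\lambda}})\subseteq\mathcal{N}_{\mathfrak{p}}$. In the paper this containment is established directly from the explicit form of $g(t)$ supplied by Lemma \ref{lemma:miracle} together with the identities coming from $\iota(g(t))\cdot e_0\in\mathcal{M}_{\bar{\lambda}}$ (giving $x^2=0$ in the $A_{2\ell}$ and $\mathcal{M}'$ cases, and $x^4=0$ together with the classification of $\mathcal{A}$-orbits otherwise). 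I would recommend rebuilding the proof around Lemma \ref{lemma:miracle} rather than around $\mathrm{SL}_2$-embeddings and dimensions.
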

This theorem follows from Theorem \ref{theorem: isoA2l}, Theorem \ref{theorem: isoA2l-1}, Theorem \ref{theorem:pi A2l-1}, and Theorem \ref{theorem:piD}, which will be proved separately case by case.

The partial orders of small dominant weights of $H$ are shown in the below picture, where the partial order is compatible with the height.

\begin{figure}[htb]
	\[
	\adjustbox{scale=0.75,center}{
	\begin{tikzcd}
	{2\omega_\ell} &&& {\omega_1+\omega_{2\lfloor\frac{\ell-1}{2}\rfloor+1}} \\
	{\omega_{\ell-1}} && {\omega_1+\omega_{2\lfloor\frac{\ell-1}{2}\rfloor-1}} \\
	{\omega_{\ell-2}} && {\omega_1+\omega_{2\lfloor\frac{\ell-1}{2}\rfloor-3}} && {\omega_{2\lfloor\frac{\ell}{2}\rfloor}} \\
	&&&& {\omega_{2\lfloor\frac{\ell}{2}\rfloor-2}} \\
	&& {\omega_1+\omega_3} \\
	{\omega_2} && {2\omega_1} && {\omega_4} \\
	{\omega_1} &&&& {\omega_2} \\
	{0} &&& {0}\\
	\text{Type } B_\ell &&& \text{Type } C_\ell
	\arrow[from=1-4, to=2-3, no head]
	\arrow[from=2-3, to=3-3, no head]
	\arrow[from=3-3, to=5-3, dashed, no head]
	\arrow[from=5-3, to=6-3, no head]
	\arrow[from=6-3, to=8-4, no head]
	\arrow[from=1-4, to=3-5, no head]
	\arrow[from=3-5, to=4-5, no head]
	\arrow[from=4-5, to=6-5, dashed, no head]
	\arrow[from=6-5, to=7-5, no head]
	\arrow[from=7-5, to=8-4, no head]
	\arrow[from=2-3, to=3-5, no head]
	\arrow[from=3-3, to=4-5, no head]
	\arrow[from=5-3, to=6-5, no head]
	\arrow[from=6-3, to=7-5, no head]
	\arrow[from=1-1, to=2-1, no head]
	\arrow[from=2-1, to=3-1, no head]
	\arrow[from=3-1, to=6-1, dashed, no head]
	\arrow[from=6-1, to=7-1, no head]
	\arrow[from=7-1, to=8-1, no head]
	\end{tikzcd}    }
	\]
\end{figure}

	We first recall a crucial lemma from \cite[Lemma 4.3]{AH}.
\begin{lemma}\label{lemma:miracleAH}
	Let $g=\sum_{i=N}^{\infty}x_i t^i \in \mathrm{SL}_n(\mathcal{K})$, where $x_N\neq 0$. Let ${\lambda}=(a_1,a_2,...,a_n)$ be a tuple of integers such that $a_1\geq a_2\cdots \geq a_n$ and $\sum a_i=0$, and  $g(t)\in \mathrm{SL}_n(\mathcal{O})t^{{\lambda}} \mathrm{SL}_n(\mathcal{O})$. Then
	\begin{enumerate}
		\item $N=a_n$.
		\item The rank of $x_N$ equals to the number of $j$ such that $a_j=a_n$.
		\item For any $s\geq 1$,
		\[\rk\begin{pmatrix}
		x_N & x_{N+1} & \cdots & x_{N+s-2} & x_{N+s-1}\\
		0 & x_{N} & \cdots & x_{N+s-3} & x_{N+s-2}\\
		\vdots & \vdots & \ddots & \vdots & \vdots\\
		0 & 0 & \cdots & x_N & x_{N+1}\\
		0 & 0 & \cdots & 0 & x_N
		\end{pmatrix}=\sum_{j=1}^{n}\max\{s-(a_j-a_n),0\}.\]
	\end{enumerate}
\end{lemma}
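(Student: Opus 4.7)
The plan is to unpack the Cartan decomposition hypothesis directly: write $g(t) = u(t)\, t^\lambda\, v(t)$ with $u, v \in \mathrm{SL}_n(\mathcal{O})$, and expand $u(t) = \sum_{i \ge 0} u_i t^i$, $v(t) = \sum_{i \ge 0} v_i t^i$, with $u_0, v_0 \in \mathrm{SL}_n(\mathbb{C})$. Since $t^\lambda = \mathrm{diag}(t^{a_1}, \ldots, t^{a_n})$ and $a_n$ is the smallest exponent, the lowest-order coefficient of $u(t) t^\lambda v(t)$ in $t$ is $t^{a_n}$, with coefficient $u_0 E v_0$, where $E \in \mathrm{Mat}_n(\mathbb{C})$ is diagonal with $E_{jj} = 1$ if $a_j = a_n$ and $0$ otherwise. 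Since $u_0, v_0$ are invertible, this coefficient is nonzero, which gives $N = a_n$ and proves (1); moreover, $\mathrm{rk}(x_N) = \mathrm{rk}(u_0 E v_0) = \mathrm{rk}(E) = |\{j : a_j = a_n\}|$, which is (2).

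For (3), the first move is to reinterpret the block-Toeplitz matrix as the matrix of an $\mathcal{O}$-linear operator. Multiplication by $g(t)/t^N = x_N + x_{N+1} t + \cdots$ induces a $\mathbb{C}$-linear endomorphism
\[
\mu_s : (\mathcal{O}/t^s\mathcal{O})^n \longrightarrow (\mathcal{O}/t^s\mathcal{O})^n,
\]
and with respect to the ordered $\mathbb{C}$-basis $\{t^j e_k : 0 \le j < s,\ 1 \le k \le n\}$, the matrix of $\mu_s$ is exactly the block-Toeplitz matrix displayed in (3) (possibly after a harmless permutation of basis vectors). Hence its rank equals $\mathrm{rk}(\mu_s)$, which by standard $\mathcal{O}$-module manipulations equals
\[
\dim_{\mathbb{C}}\!\bigl(\, g\cdot\mathcal{O}^n + t^{N+s}\mathcal{O}^n \,\bigr)\big/\, t^{N+s}\mathcal{O}^n.
\]

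Now the Cartan decomposition does the remaining work. Because $v \in \mathrm{SL}_n(\mathcal{O})$ satisfies $v \cdot \mathcal{O}^n = \mathcal{O}^n$, we have $g \cdot \mathcal{O}^n = u\cdot t^\lambda \mathcal{O}^n$; because $u \in \mathrm{SL}_n(\mathcal{O})$ preserves $t^{N+s}\mathcal{O}^n$ as well, left multiplication by $u$ induces a $\mathbb{C}$-linear isomorphism
\[
\bigl(g\cdot\mathcal{O}^n + t^{N+s}\mathcal{O}^n\bigr)\big/\, t^{N+s}\mathcal{O}^n \;\cong\; \bigl(t^\lambda \mathcal{O}^n + t^{N+s}\mathcal{O}^n\bigr)\big/\, t^{N+s}\mathcal{O}^n.
\]
The right-hand side decomposes coordinatewise into a sum over $j = 1, \ldots, n$ of $(t^{a_j}\mathcal{O} + t^{N+s}\mathcal{O})/t^{N+s}\mathcal{O}$, each summand of $\mathbb{C}$-dimension $\max\{N+s-a_j, 0\} = \max\{s-(a_j - a_n),0\}$, using $N = a_n$. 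Summing over $j$ yields the formula in (3).

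The conceptual heart of the argument is the second step: recognizing the Toeplitz matrix as representing an endomorphism of a finite truncation of $\mathcal{K}^n/\mathcal{O}^n$-type modules. Once that identification is made, invariance of the rank under the $\mathrm{SL}_n(\mathcal{O}) \times \mathrm{SL}_n(\mathcal{O})$ action reduces the computation to the diagonal case $g = t^\lambda$, which is transparent. The only place to be careful is the bookkeeping in the isomorphism above, since we need both that $u$ preserves $t^{N+s}\mathcal{O}^n$ and that $v$ does not shift the lattice $\mathcal{O}^n$; both hold because $u, v \in \mathrm{SL}_n(\mathcal{O})$ with $u_0, v_0$ invertible over $\mathbb{C}$.
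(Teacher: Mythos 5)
The paper does not actually prove this lemma --- it is quoted verbatim from Achar--Henderson \cite[Lemma 4.3]{AH} (``We first recall a crucial lemma from \cite[Lemma 4.3]{AH}'') --- so there is no in-paper proof to compare against. Your proof is correct and self-contained, and it follows essentially the same lattice-theoretic route that Achar--Henderson use: read off parts (1) and (2) from the lowest-order coefficient of $u(t)\,t^\lambda\,v(t)$, then identify the block-Toeplitz matrix in (3) with the matrix of multiplication by $g/t^N$ on $(\mathcal{O}/t^s\mathcal{O})^n$, so that its rank becomes $\dim_\mathbb{C}\bigl(g\cdot\mathcal{O}^n + t^{N+s}\mathcal{O}^n\bigr)/t^{N+s}\mathcal{O}^n$, which is invariant under left and right $\mathrm{SL}_n(\mathcal{O})$-translation and hence reduces to the diagonal case $g = t^\lambda$. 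Two minor bookkeeping points you gloss over but which check out: the matrix of $\mu_s$ in the basis you name is the \emph{lower}-triangular block-Toeplitz matrix, which becomes the displayed \emph{upper}-triangular one after reversing the order of the basis $\{t^j e_k\}$ in both domain and codomain (a genuine permutation, so rank is unaffected); and passing from $\mathrm{rk}(\mu_s)$ to $\dim_\mathbb{C}\bigl(g\cdot\mathcal{O}^n + t^{N+s}\mathcal{O}^n\bigr)/t^{N+s}\mathcal{O}^n$ uses that $\mathrm{Im}(\mu_s)$ is exactly the image of $(g/t^N)\mathcal{O}^n$ in $(\mathcal{O}/t^s\mathcal{O})^n$, followed by multiplication by $t^N$. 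Both are correct.
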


	We have the following lemma for the twisted version.
	
\begin{lemma}\label{lemma:miracle}
	Let $g(t)=\sum_{i=N}^{\infty}x_i t^i \in G(\mathcal{K})^\sigma$ where $x_i\in\Mat_{m\times m}$, $x_N\neq 0$. Let $\bar{\lambda}=(a_1,...,a_\ell)\in X_{*} (T)_{\sigma}^+$ be such that $g(t)\in G(\mathcal{O})^\sigma n^{\lambda} G(\mathcal{O})^\sigma$. Then
	\begin{enumerate}
		\item \[N=\begin{cases}
			-a_1 & \text{if } (X_N,r)=(A_{2\ell},2),(A_{2\ell-1},2);\\
			-2a_1 & \text{if } (X_N,r)=(D_{\ell+1},2).
		\end{cases}\]
		\item The rank of $x_N$ is equal to the number of $j$ such that $a_j=a_1$.
	\end{enumerate}
\end{lemma}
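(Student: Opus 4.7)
The plan is to reduce both claims to the untwisted Lemma \ref{lemma:miracleAH} by composing with the defining (vector) representation of $G$. In each of the three cases, $G$ embeds naturally in $\mathrm{SL}_m$ with $m = 2\ell+1,\, 2\ell,\, 2\ell+2$ for $(A_{2\ell},2),\,(A_{2\ell-1},2),\,(D_{\ell+1},2)$ respectively. After this embedding, $g(t)=\sum x_i t^i$ becomes a matrix-valued Laurent series with $x_i\in\Mat_{m\times m}$, and the norm element $n^\lambda=\epsilon^{\sigma\lambda}\,t^{\lambda+\sigma\lambda}$ (with $\epsilon=-1$) becomes an explicit diagonal matrix whose entries have the form $\pm t^{\mu_i}$ for some integers $\mu_i$.

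The central computation is to pick a convenient lift $\lambda$ of $\bar{\lambda}=(a_1,\ldots,a_\ell)$ and read off the diagonal of $n^\lambda$. In the two type $A$ cases, $\sigma$ acts on the diagonal torus of $\mathrm{SL}_m$ by $(c_1,\ldots,c_m)\mapsto(-c_m,\ldots,-c_1)$; taking $\lambda=(a_1,\ldots,a_\ell,0,\ldots,0)$ (with a middle adjustment to sit in $X_*(T_{\mathrm{SL}_m})$) yields $\lambda+\sigma\lambda=(a_1,\ldots,a_\ell,[0],-a_\ell,\ldots,-a_1)$, where the bracketed middle $0$ is present only for $(A_{2\ell},2)$. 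In the $(D_{\ell+1},2)$ case, $\sigma$ fixes the first $\ell$ coordinates of the $\mathrm{SO}_{2\ell+2}$ torus and negates the $(\ell+1)$-th; picking $\lambda=(a_1,\ldots,a_\ell,0)$ gives $\sigma\lambda=\lambda$, so $\lambda+\sigma\lambda=2\lambda$ and the resulting valuation tuple of the diagonal is $(2a_1,\ldots,2a_\ell,0,0,-2a_\ell,\ldots,-2a_1)$. In every case the tuple $\mu$ is already weakly decreasing (since $a_1\geq\cdots\geq a_\ell\geq 0$) and sums to $0$.

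Since the signs $\epsilon^{\sigma\lambda}$ are units in $\mathcal{O}^\times$, they are absorbed into $\mathrm{GL}_m(\mathcal{O})$, giving $G(\mathcal{O})^\sigma n^\lambda G(\mathcal{O})^\sigma \subset \mathrm{GL}_m(\mathcal{O})\cdot t^\mu\cdot \mathrm{GL}_m(\mathcal{O})$ with $\mu$ the valuation tuple above. Lemma \ref{lemma:miracleAH} (whose proof goes through verbatim for $\mathrm{GL}_m$, or one renormalizes by a power of $\det$) then gives $N=\min_i\mu_i$ and $\rk x_N=\#\{i:\mu_i=N\}$. In the two $A$ cases this minimum is $-a_1$ and the number of attaining indices equals $\#\{j\in\{1,\ldots,\ell\}:a_j=a_1\}$; in the $D$ case the minimum is $-2a_1$ and again the count is $\#\{j:a_j=a_1\}$. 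The only non-routine point is the factor-of-two discrepancy in the $(D_{\ell+1},2)$ case, which reflects the distinction between $\sigma$-invariant torus coordinates (where $\lambda+\sigma\lambda=2\lambda$) and $\sigma$-antiinvariant ones (where the two lifts ``mirror'' each other with no doubling); everything else is direct diagonal bookkeeping.
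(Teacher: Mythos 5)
Your argument is the same as the paper's: both reduce to the untwisted Lemma \ref{lemma:miracleAH} by passing to the vector representation, choosing a lift $\lambda$ of $\bar\lambda$, computing $\lambda+\sigma\lambda$ as an $m$-tuple (which is independent of the lift), and then reading off the minimum and its multiplicity. The only cosmetic difference is that you manipulate torus coordinates directly while the paper first expands $\lambda$ in simple coroots and then converts to coordinates; the valuation tuples you obtain agree with the paper's, so the conclusion follows identically.
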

\begin{proof}
	We can write
	$$\bar{\lambda}=(a_1,...,a_\ell)=\sum_{i=1}^{\ell}\left(\sum_{j=1}^{i}a_j\right)\gamma_i$$
	where $\gamma_i$ are simple roots of $H$ as labelled by (\ref{equation:labelgamma}). We choose a representative $\lambda\in X_{*} (T)$ of $\bar{\lambda}$ by
	\[\lambda=\begin{cases}\label{equation:domcoweight}
	\sum_{i=1}^{\ell}\left(\sum_{j=1}^{i}a_j\right)\check{\alpha}_i & \text{if } (X_N,r)=(A_{2\ell},2),(D_{\ell+1},2);\\
	\sum_{i=1}^{\ell-1}\left(\sum_{j=1}^{i}a_j\right)\check{\alpha}_i+\dfrac{1}{2}\left(\sum_{j=1}^{\ell}a_j\right)\check{\alpha}_\ell & \text{if } (X_N,r)=(A_{2\ell-1},2)\\
	\end{cases}\]
	so that
	\[\lambda+\sigma\lambda=\begin{cases}
	\sum_{i=1}^{\ell}\left(\sum_{j=1}^{i}a_j\right)\check{\alpha}_i+\sum_{i=\ell+1}^{2\ell}\left(\sum_{j=1}^{2\ell-i+1}a_j\right)\check{\alpha}_i & \text{if } (X_N,r)=(A_{2\ell},2);\\
	\sum_{i=1}^{\ell}\left(\sum_{j=1}^{i}a_j\right)\check{\alpha}_i+\sum_{i=\ell+1}^{2\ell-1}\left(\sum_{j=1}^{2\ell-i}a_j\right)\check{\alpha}_i  & \text{if } (X_N,r)=(A_{2\ell-1},2);\\
	\sum_{i=1}^{\ell-1}\left(\sum_{j=1}^{i}2a_j\right)\check{\alpha}_i+\left(\sum_{j=1}^{\ell}a_j\right)(\check{\alpha}_\ell+\check{\alpha}_{\ell+1}) & \text{if } (X_N,r)=(D_{\ell+1},2).\\
	\end{cases}\]
	The simple coroots of ${G}$ are identified with tuples of integers through the construction of root system given from \cite[\S 12]{Hu2}. 
	
	Let $\rho: G\to {\rm GL}(V)$ be the standard representation of $G$.  	We will determine the double ${\rm SL}(V_\mathcal{O})$-coset in ${\rm SL}(V_\mathcal{K}) $ that  $\rho(g(t))$ belongs to.  
	
	If ${G}$ is of the type $A_m$, then  $\check{\alpha}_i$, $i=1,...,m$, are identified with the following $(m+1)$-tuples
	\begin{center}
		$\check{\alpha}_1=(1,-1,0,0,...,0)$\\
		$\check{\alpha}_2=(0,1,-1,0,...,0)$
		$$\vdots$$
		$\check{\alpha}_{m}=(0,0,0,...,1,-1)$   
	\end{center}
	and hence, as the coweight of $\mathrm{SL}_m$,  $\lambda+\sigma\lambda$  corresponds to the following tuples
	\[\lambda+\sigma\lambda=\begin{cases}
	(a_1,a_2,...,a_\ell,0,-a_\ell,...,-a_2,-a_1) & \text{if } (X_N,r)=(A_{2\ell},2);\\
	(a_1,a_2,...,a_\ell,-a_\ell,...,-a_2,-a_1) & \text{if } (X_N,r)=(A_{2\ell-1},2).
	\end{cases}\]
	Assume that ${G}$ has the type $D_{\ell+1}$. Then $\check{\alpha}_i$, $i=1,...,\ell+1$, are identified with following $(\ell+1)$-tuples
	\begin{center}
		$\check{\alpha}_1=(1,-1,0,0,...,0)$\\
		$\check{\alpha}_2=(0,1,-1,0,...,0)$
		$$\vdots$$
		$\check{\alpha}_{\ell}=(0,0,0,...,1,-1)$\\
		$\check{\alpha}_{\ell+1}=(0,0,0,...,1,1).$
	\end{center}
	Then $\lambda+\sigma\lambda=(2a_1,2a_2,...,2a_\ell,0)$
	as the coweight of $G=\mathrm{Spin}_{2\ell+2}$. Choose an appropriate maximal torus  and a positive root system in $G$.  Composing with $\rho: G\to {\rm SL}_{2\ell+2}$, 
as	the coweight of $\mathrm{SL}_{2\ell+2}$, $\lambda+\sigma\lambda$ corresponds to the following tuple
	$$(2a_1,2a_2,...,2a_\ell,0,0,-2a_\ell,...,-2a_2,-2a_1).$$

	We write $g(t)=A(t)n^\lambda B(t)$, where $n^\lambda$ is a norm of $t^\lambda$ defined by (\ref{equation:norm}) and $A(t),B(t)\in G(\mathcal{O})^\sigma$.  Hence $\rho(g(t)) \in \mathrm{SL}_m(\mathcal{O}) \rho ( t^{\lambda+\sigma\lambda} ) \mathrm{SL}_m(\mathcal{O}) $.  By the above description of  $\rho ( t^{\lambda+\sigma\lambda} )$, this lemma follows from Lemma \ref{lemma:miracleAH}.
\end{proof}

\subsection{Case $(X_N,r)=(A_{2\ell},2)$}
	Let $\langle \cdot , \cdot \rangle$ be a nondegenerate symmetric bilinear form on $V=\mathbb{C}^{2\ell+1}$ whose matrix is
	\[J=\begin{pmatrix}
	&  &  &  &  &  & 1\\ 
	&  &  &  &  & -1 & \\ 
	&  &  &  & 1 &  & \\ 
	&  &  & -1 &  &  & \\ 
	&  & \reflectbox{$\ddots$} &  &  &  & \\ 
	& -1 &  &  &  &  & \\ 
	1 &  &  &  &  &  & 
	\end{pmatrix}.\]
	The diagram automorphism $\sigma$ on $\mathfrak{g}$ given by (\ref{equation:sigma}) becomes $\sigma(A)=-JA^TJ^{-1}$ and the diagram automorphism $\sigma$ on $G$ is given by
\begin{equation}\label{equation:Dynkin}
	\sigma(A)=JA^{-T}J^{-1}.
\end{equation}
	This $\sigma$ gives the decomposition
	$\mathfrak{g}=\mathfrak{k}\oplus \mathfrak{p}$ to 1 and $-1$ eigenspaces $\mathfrak{k}$ and $\mathfrak{p}$, respectively. Let $K:=(\mathrm{SL}_{2\ell+1})^\sigma=\mathrm{SO}_{2\ell+1}$. The classification of nilpotent $K$-orbits in $\mathfrak{p}$ and their dimensions follow from the Theorem \ref{theorem:SO_{2n+1}} and \ref{theorem:dimension}.
	
 Set	
\begin{equation}
\label{order2_nil}
  \mathcal{N}_{\mathfrak{p}, 2 }=\{ x\in \mathcal{N}_\mathfrak{p} \,|\,   x^2=0  \}    .\end{equation}

\begin{theorem}\label{theorem: isoA2l}
$\pi$ maps $\mathcal{M}$ isomorphically onto $\mathcal{N}_{\mathfrak{p},2}$. Moreover,  $\pi$  maps $\mathcal{M}_{(1^j 0^{\ell-j})}$ isomorphically to $[2^j 1^{2\ell-2j+1}]$.  
\end{theorem}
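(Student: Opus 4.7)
The plan is to exhibit an explicit two-sided inverse $\phi \colon \mathcal{N}_{\mathfrak{p},2} \to \mathcal{M}$ to $\pi$, given by $\phi(x) = I + x t^{-1}$, and to track its behavior stratum by stratum. Recall that $\mathcal{N}_{\mathfrak{p},2} = \bigsqcup_{j=0}^{\ell} [2^j 1^{2\ell-2j+1}]$ by Theorem \ref{theorem:SO_{2n+1}}, matching the decomposition $\mathcal{M} = \bigsqcup_{j=0}^{\ell} \mathcal{M}_{(1^j 0^{\ell-j})}$ coming from the small dominant weights of $H$ of type $B_\ell$ classified in Lemma \ref{lemma:smalldom}.

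First, I would check that $\phi(x) \in G(\mathcal{O}^-)_0^{\sigma}$. Nilpotency of $x$ gives $\det(I + xt^{-1}) = 1$, and $x^2 = 0$ yields $(I + xt^{-1})^{-1} = I - xt^{-1}$. Combined with $x \in \mathfrak{p}$, i.e.\ $Jx^T J^{-1} = x$, the direct computation $\sigma_G(\phi(x)(-t)) = J(I + x^T t^{-1}) J^{-1} = I + xt^{-1} = \phi(x)(t)$ confirms $\sigma$-invariance. Lemma \ref{lemma:miracle} then pins down the Schubert cell of $\phi(x) \cdot e_0$: its lowest power of $t$ is $t^{-1}$ with leading coefficient $x$ of rank $j := \rk x$, so the unique dominant weight $\bar\mu$ with $\phi(x) \cdot e_0 \in \mathcal{G}r_{\bar\mu}$ has $\mu_1 = 1$ and exactly $j$ entries equal to $1$, forcing $\bar\mu = (1^j 0^{\ell-j})$. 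Hence $\phi$ restricts to maps $[2^j 1^{2\ell-2j+1}] \to \mathcal{M}_{(1^j 0^{\ell-j})}$, and $\phi(\mathcal{N}_{\mathfrak{p},2}) \subseteq \mathcal{M}$.

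Next, $\pi \circ \phi = \mathrm{id}$ is immediate from the definition of $\pi$. For $\phi \circ \pi = \mathrm{id}$, take any $g \in \mathcal{M}_{(1^j 0^{\ell-j})}$: Lemma \ref{lemma:miracle}(1) with $a_1 = 1$ forces $N = -1$, so $g(t) = I + x_{-1} t^{-1}$ has no terms $t^{-k}$ for $k \geq 2$, giving $\phi(\pi(g)) = \phi(x_{-1}) = g$. Since $\pi$ and $\phi$ are both morphisms and mutually inverse on points, $\pi$ is an isomorphism $\mathcal{M} \xrightarrow{\sim} \mathcal{N}_{\mathfrak{p},2}$, restricting to isomorphisms $\mathcal{M}_{(1^j 0^{\ell-j})} \xrightarrow{\sim} [2^j 1^{2\ell-2j+1}]$.

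The main obstacle, which I expect to take real work, is confirming that $\pi(\mathcal{M}_{(1^j 0^{\ell-j})})$ actually lies in $[2^j 1^{2\ell-2j+1}]$, equivalently that $x_{-1}^2 = 0$ for every $g = I + x_{-1} t^{-1} \in \mathcal{M}_{(1^j 0^{\ell-j})}$. The rank of $x_{-1}$ equals $j$ by Lemma \ref{lemma:miracle}(2) and $x_{-1} \in \mathfrak{p}$, so the combination $x_{-1}^2 = 0$ and $\rk x_{-1} = j$ pins down the Jordan type as $[2^j 1^{2\ell-2j+1}]$. To obtain $x_{-1}^2 = 0$, I would apply Lemma \ref{lemma:miracleAH}(3) to the standard representation $\rho(g(t)) \in \mathrm{SL}_{2\ell+1}(\mathcal{K})$ with coweight $\lambda + \sigma\lambda = (1^j, 0^{2\ell-2j+1}, (-1)^j)$ and $s = 2$: the rank formula evaluates to $2\ell + 1$, while block row reduction (using $x_0 = I$) shows the rank of $\bigl( \begin{smallmatrix} x_{-1} & I \\ 0 & x_{-1} \end{smallmatrix} \bigr)$ equals $(2\ell + 1) + \rk(x_{-1}^2)$, forcing $x_{-1}^2 = 0$.
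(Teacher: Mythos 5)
Your proposal is correct, and the heart of it --- proving $x^2=0$ for $g=I+xt^{-1}\in\mathcal{M}_{(1^j0^{\ell-j})}$ --- is done by a genuinely different argument than the paper's. The paper invokes Lemma~\ref{lemma:iota}: since $\iota$ preserves $\mathcal{M}_{(1^j0^{\ell-j})}$, the element $\iota(g(t))=(I-xt^{-1})^{-1}$ must again be of the form $I+zt^{-1}$ by Lemma~\ref{lemma:miracle}(1), and expanding $(I-xt^{-1})(I+zt^{-1})=I$ forces $z=x$ and $x^2=0$. You instead apply part (3) of the untwisted Lemma~\ref{lemma:miracleAH} with $s=2$ to $\rho(g(t))$ and the coweight $\lambda+\sigma\lambda=(1^j,0^{2\ell-2j+1},(-1)^j)$, obtaining $\rk\bigl(\begin{smallmatrix}x&I\\0&x\end{smallmatrix}\bigr)=\sum_k\max\{2-(a_k+1),0\}=2\ell+1$, and then combine this with the elementary identity $\rk\bigl(\begin{smallmatrix}x&I\\0&x\end{smallmatrix}\bigr)=(2\ell+1)+\rk x^2$ (clear in $\begin{smallmatrix}x&I\\0&x\end{smallmatrix}$ the $x$ block against the $I$ block to reach $\begin{smallmatrix}0&I\\-x^2&0\end{smallmatrix}$). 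This bypasses the $\iota$-involution entirely, replacing a symmetry argument by a rank computation that was already latent in the lemma both proofs rely on. The paper's route is arguably cleaner in that it recycles $\iota$ across all three classical cases, whereas your route is self-contained and makes the role of the "small" condition transparent at the level of the rank formula. The remaining pieces of your argument --- the explicit two-sided inverse $\phi(x)=I+xt^{-1}$, the verification that $\phi(x)\in G(\mathcal{O}^-)_0^\sigma$ using $Jx^TJ^{-1}=x$ and $x^2=0$, the identification of the cell via Lemma~\ref{lemma:miracle}(1)--(2), and the conclusion that $\phi\circ\pi=\mathrm{id}$ because $N=-1$ forces $g(t)=I+x_{-1}t^{-1}$ --- match the paper's proof in content, just organized more explicitly as an inverse-morphism check.
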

\begin{proof}
We first show that $\pi$ maps injectively into  $\mathcal{N}_{\mathfrak{p},2}$. 	Let $g(t)\cdot e_0\in \mathcal{M}$. Then $g(t)\cdot e_0\in \mathcal{M}_{(1^j0^{\ell-j})}$ for some $j$. By Lemma \ref{lemma:miracle}, $g(t)=I+xt^{-1}$ for some $x\in \Mat_{2\ell+1, 2\ell+1}$. By Lemma \ref{lemma:iota}, $\iota(g(t)\cdot e_0)\in \mathcal{M}_{(1^j0^{\ell-j})}$. Hence $\iota(g(t))=(I-xt^{-1})^{-1}=I+zt^{-1}$ for some $z\in \Mat_{2\ell+1, 2\ell+1}$, and so $x^2=0$.  We now show the map  $\pi: \mathcal{M}\to \mathcal{N}_{\mathfrak{p},2}$ is bijective.  Let $x\in \mathcal{N}_{\mathfrak{p}}$ be such that $x^2=0$. Then $I+xt^{-1}\in G(\mathcal{K})^\sigma$. By (\ref{equation:Cartan}), $(I+xt^{-1})\cdot e_0\in \mathcal{G}r_{\bar{\lambda}}$ for some $\bar{\lambda}=(a_1,...,a_\ell)\in X_{*} (T)_{\sigma}^+$. By lemma $\ref{lemma:miracle}$, $\bar{\lambda}=(1^j 0^{\ell-j})$ for some $j$.  

Consider the map  $\phi:   \mathcal{N}_{\mathfrak{p}, 2 }\to  \mathcal{G}r^-_0$ given by $x\mapsto  (I+ xt^{-1})\cdot e_0$.  Clearly $\phi$ is a closed embedding, as  $\mathcal{G}r^-_0\simeq G(\mathcal{O}^-)^\sigma_0$ and $I+xt^{-1}\in G(\mathcal{O}^-)^\sigma_0$ if and only if $x$ is nilpotent.  By the  argument in the previous paragraph,  $\phi(\mathcal{N}_{\mathfrak{p}, 2 })= \mathcal{M}$ and $\pi\circ  \phi$ is the identity map on $\mathcal{N}_{\mathfrak{p},2}$. Thus, $\phi:   \mathcal{N}_{\mathfrak{p}, 2 }\to \mathcal{M}$ is an isomorphism, and $\pi: \mathcal{M}\to  \mathcal{N}_{\mathfrak{p}, 2 }$ is its inverse. 
	
Finally, we show $\pi$  maps $\mathcal{M}_{(1^j 0^{\ell-j})}$ isomorphically to ${[2^j 1^{2\ell-2j+1}]}$ for each $j$.  Since $\pi$ is $K$-equivariant, it suffices to show that $\pi$ maps $\mathcal{M}_{(1^j 0^{\ell-j})}$ onto 	${[2^j 1^{2\ell-2j+1}]}$. Let $(I+xt^{-1})\cdot e_0\in \mathcal{M}_{(1^j 0^{\ell-i})}$. Then $x^2=0$ and $x$ has the Jordan blocks of size at most 2. By Lemma $\ref{lemma:miracle}$, $\rk x=j$ and then $x$ has the partition $[2^j 1^{2\ell-2j+1}]$. It is obvious that $\pi$ is injective. To prove surjectivity, let $x\in \mathcal{N}_{\mathfrak{p}, 2 }$ having the partition $[2^j 1^{2\ell-2j+1}]$. Then, $(I+xt^{-1})\cdot e_0\in \mathcal{M}$ and hence $(I+xt^{-1})\cdot e_0\in \mathcal{M}_{(1^k 0^{\ell-k})}$ for some $k$. In fact, $k=j$ since $\pi((I+xt^{-1})\cdot e_0)=x\in [2^k 1^{2\ell-2k+1}].$
\end{proof}

\subsection{Case $(X_N,r)=(A_{2\ell-1},2)$}

	Let $\langle \cdot , \cdot \rangle$ be a symplectic bilinear form on $V=\mathbb{C}^{2\ell}$ whose matrix is
	\[J=\begin{pmatrix}
	&  &  &  &  &  & 1\\ 
	&  &  &  &  & -1 & \\ 
	&  &  &  & 1 &  & \\ 
	&  &  & -1 &  &  & \\ 
	&  & \reflectbox{$\ddots$} &  &  &  & \\ 
	& 1 &  &  &  &  & \\ 
	-1 &  &  &  &  &  & 
	\end{pmatrix}.\]
	The diagram automorphism $\sigma$ on $\mathfrak{g}$ given by (\ref{equation:sigma}) becomes $\sigma(A)=-JA^TJ^{-1}$ and the action  on $G$ is given by
	\begin{equation}\label{equation:Dynkin}
	\sigma(A)=JA^{-T}J^{-1}.
	\end{equation}
	This $\sigma$ gives the decomposition
	$\mathfrak{g}=\mathfrak{k}\oplus \mathfrak{p}$ to 1 and $-1$ eigenspaces $\mathfrak{k}$ and $\mathfrak{p}$, respectively. Let $K:=(\mathrm{SL}_{2\ell})^\sigma=\mathrm{Sp}_{2\ell}$. The classification of nilpotent $K$-orbits in $\mathfrak{p}$ and their dimensions follow from the Theorem \ref{theorem:classify} and \ref{theorem:dimension}.

	Define the following constructible sets
		$$\mathcal{M}':=\bigcup_{j=0}^{\lfloor \frac{\ell}{2}\rfloor}\mathcal{M}_{(1^{2j} 0^{\ell-2j})},\hspace{0.5 cm}
		\mathcal{M}'':=\bigcup_{j=0}^{\lfloor \frac{\ell-1}{2}\rfloor}\mathcal{M}_{(21^{2j} 0^{\ell-2j-1})}.$$
	By Lemma \ref{lemma:miracle}, the element of $\mathcal{M}'$ is of the form $(I+xt^{-1})\cdot e_0$ and the element of $\mathcal{M}''$ is of the form $(I+xt^{-1}+yt^{-2})\cdot e_0$. Let $\mathcal{N}_{\mathfrak{p},2}$ be the order 2 nilpotent cone defined as in (\ref{order2_nil}).
	\begin{theorem}
	\label{theorem: isoA2l-1}
	$\pi$ maps $\mathcal{M'}$ isomorphically onto $\mathcal{N}_{\mathfrak{p},2}$. Moreover, $\pi$ maps $\mathcal{M}_{(1^{2j} 0^{\ell-2j})}$ isomorphically to $[2^{2j} 1^{2\ell-4j}]$.
	\end{theorem}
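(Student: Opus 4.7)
The plan is to imitate the proof of Theorem \ref{theorem: isoA2l}, with one additional parity check peculiar to the symplectic setting. First, suppose $g(t) \cdot e_0 \in \mathcal{M}_{(1^{2j} 0^{\ell-2j})}$. By Lemma \ref{lemma:miracle}\,(1), the smallest power of $t$ appearing in $g(t)$ is $t^{-1}$, so we may write $g(t) = I + x t^{-1}$ with $x \in \Mat_{2\ell \times 2\ell}$. Since $\mathcal{G}r_0^- \simeq G(\mathcal{O}^-)_0^\sigma$, the $\sigma$-invariance of $g$ forces $\sigma(x) = -x$, so $x \in \mathfrak{p}$, and by the construction of $\pi$ we have $\pi(g \cdot e_0) = x$. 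Part (2) of the same lemma identifies $\rk x = 2j$.

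To cut the expansion off at order $t^{-1}$, I would apply Lemma \ref{lemma:iota}: $\iota(g(t)) \cdot e_0 = (I - x t^{-1})^{-1} \cdot e_0$ also lies in $\mathcal{M}_{(1^{2j} 0^{\ell-2j})}$. Expanding $(I - x t^{-1})^{-1} = I + x t^{-1} + x^2 t^{-2} + \cdots$ and comparing with the shape $I + z t^{-1}$ dictated by Lemma \ref{lemma:miracle}\,(1) for any element of this cell forces the higher-order terms to vanish, so $x^2 = 0$. Hence $x \in \mathcal{N}_{\mathfrak{p}, 2}$.

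For the converse, given $x \in \mathcal{N}_{\mathfrak{p}, 2}$, the element $g(t) := I + x t^{-1}$ satisfies $\sigma(g) = g$ (since $\sigma(x) = -x$ and $\sigma(t) = -t$) and has inverse $I - x t^{-1}$, while $\det g = 1$ because $x$ is nilpotent with $\tr x = 0$. Thus $g \in G(\mathcal{O}^-)^\sigma$, and by Lemma \ref{lemma:miracle}, $g \cdot e_0$ lies in $\mathcal{G}r_{(1^k 0^{\ell-k})}$ with $k = \rk x$. The decisive observation is that, by Theorem \ref{theorem:classify}, any order-$2$ nilpotent $x \in \mathfrak{p}$ has Jordan type $[2^k 1^{2\ell - 2k}]$ in which every part occurs with even multiplicity, forcing $k$ to be even. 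This matches exactly the requirement that $(1^k 0^{\ell-k})$ lie in $X_*(T)_\sigma^+$ for $H$ of type $C_\ell$ (the coordinate sum must be even), so $g \cdot e_0 \in \mathcal{M}_{(1^k 0^{\ell-k})} \subset \mathcal{M}'$.

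Combining both directions yields a $\pi$-bijection between $\mathcal{M}'$ and $\mathcal{N}_{\mathfrak{p}, 2}$. Since both $\pi$ and its inverse $x \mapsto (I + x t^{-1}) \cdot e_0$ are algebraic morphisms, this is an isomorphism of varieties. Restricting to each cell, $\pi$ sends $\mathcal{M}_{(1^{2j} 0^{\ell-2j})}$ bijectively onto $\{x \in \mathcal{N}_{\mathfrak{p}, 2} : \rk x = 2j\}$, which by Theorem \ref{theorem:classify} coincides with the nilpotent $\mathrm{Sp}_{2\ell}$-orbit $[2^{2j} 1^{2\ell - 4j}]$. I do not foresee a serious obstacle: the strategy of Theorem \ref{theorem: isoA2l} adapts directly, and the only genuinely new ingredient is the parity constraint on $\rk x$, which flows cleanly from the $\mathrm{Sp}_{2\ell}$-classification of nilpotent self-adjoint maps in Theorem \ref{theorem:classify}.
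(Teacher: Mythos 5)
Your proof is correct and takes essentially the same approach as the paper, which defers to the argument of Theorem \ref{theorem: isoA2l} (the untwisted shape $I + xt^{-1}$ via Lemma \ref{lemma:miracle}, the vanishing of $x^2$ via Lemma \ref{lemma:iota}, and matching of orbits by rank). The parity check you make explicit—that Theorem \ref{theorem:classify} forces $\rk x$ to be even, matching the constraint that $(1^k 0^{\ell-k}) \in X_*(T)_\sigma^+$ for $H$ of type $C_\ell$—is a useful detail left implicit in the paper.
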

	\begin{proof}
		The proof is the same as the proof of Theorem \ref{theorem: isoA2l}, where in this case we use Lemma \ref{lemma:miracle} for $(A_{2\ell-1}, 2)$. 
	\end{proof}
	Before we describe  elements of $\mathcal{M}''$, we need the following lemma.
\begin{lemma}\label{lemma:notfix}
	Let $g(t)\cdot e_0=(I+xt^{-1}+yt^{-2})\cdot e_0\in \mathcal{M}''$. Then
	\begin{enumerate}
		\item $\iota (g(t))\neq g(t)$
		\item If $g(t)\cdot e_0\in \mathcal{M}_{(21^{2j} 0^{\ell-2j-1})}$, then $\rk
			\begin{pmatrix}
				y & x\\
				0 & y
			\end{pmatrix}=2j+2.$
	\end{enumerate}
	 
\end{lemma}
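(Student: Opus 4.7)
For part (1), I would argue by contradiction. The hypothesis $\iota(g(t)) = g(t)$ is equivalent to $g(t) g(-t) = I$; expanding the left-hand side and matching coefficients of $t^{-2}$, $t^{-3}$, and $t^{-4}$ yields the three relations $x^2 = 2y$, $[x,y] = 0$, and $y^2 = 0$.

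The decisive step is to show that these relations, together with $\sigma$-invariance, force $y$ itself to lie in $\mathcal{A}$. The $t^{-1}$-coefficient of $\sigma(g(t)) = g(t)$ gives $x \in \mathfrak{p} = \mathcal{A}$ automatically, while the $t^{-2}$-coefficient produces the constraint $Jy^T + yJ = x^2 J$; substituting $x^2 = 2y$ collapses this to $Jy^T = yJ$, i.e.\ $y$ is self-adjoint. Hence $y$ is a nilpotent element of $\mathcal{A}$ with $y^2 = 0$, so its Jordan partition is of the form $[2^a, 1^b]$ with $2a + b = 2\ell$. By Theorem \ref{theorem:classify}, each multiplicity must be even, forcing $\rk y = a$ to be even. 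This contradicts Lemma \ref{lemma:miracle}(2), which identifies $\rk y$ with the number of coordinates equal to $a_1 = 2$ in the partition $(2, 1^{2j}, 0^{\ell-2j-1})$, namely $1$.

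For part (2), my plan is to invoke Lemma \ref{lemma:miracleAH}(3) directly, applied to $\rho(g(t)) \in \mathrm{SL}_{2\ell}(\mathcal{K})$, where $\rho$ is the standard representation. The proof of Lemma \ref{lemma:miracle} already exhibits $\rho(g(t))$ as lying in the $\mathrm{SL}_{2\ell}(\mathcal{O})$-double coset of $t^{\lambda + \sigma\lambda}$, corresponding to the tuple $(2, 1^{2j}, 0^{2(\ell - 2j - 1)}, (-1)^{2j}, -2)$. Taking $s = 2$, so that $N = -2$, $x_N = y$, and $x_{N+1} = x$, the rank formula reduces to $\sum_k \max\{-a_k, 0\}$, which evaluates to $2j + 2$ by summing the $2j$ contributions of $1$ from the entries equal to $-1$ and the single contribution of $2$ from the entry $-2$.

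The main conceptual obstacle lies entirely in part (1): one must notice that the identity $x^2 = 2y$ is precisely what is needed to upgrade the $\sigma$-invariance constraint on $y$ into the statement $y \in \mathcal{A}$, thereby unlocking the even-multiplicity classification from Section \ref{sect_3}. Part (2) is then a routine application of the rank formula for double cosets in $\mathrm{SL}_{2\ell}(\mathcal{K})$ already used throughout the section.
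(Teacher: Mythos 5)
Your proof is correct, and Part (2) is essentially the paper's argument: both pass to $\rho(g(t))\in\mathrm{SL}_{2\ell}(\mathcal{K})$, identify the double-coset tuple $(2,1^{2j},0^{2\ell-4j-2},(-1)^{2j},-2)$ (as already set up in the proof of Lemma \ref{lemma:miracle}), and apply the $s=2$ case of the rank formula from Lemma \ref{lemma:miracleAH}(3).

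Part (1), however, takes a genuinely different (and somewhat cleaner) route. The paper, after deriving $y=\tfrac{1}{2}x^2$ and $x^4=0$ from $\iota(g(t))=g(t)$, argues about the partition of $x$: it observes $\rk x^3\leq\rk x^2=\rk y=1$, uses a stable-kernel argument to show that $\rk x^3=1$ would force $\rk x^4=1$, concludes $x^3=0$, and thus $x$ would have partition $[3\,2^k 1^{\dots}]$ with the part $3$ occurring once, contradicting the even-multiplicity classification of nilpotent orbits in $\mathcal{A}$ (Theorem \ref{theorem:classify}). You instead argue about the partition of $y$: it is self-adjoint (this actually follows at once from $y=\tfrac{1}{2}x^2$ and $x\in\mathfrak{p}$, so the detour through the $t^{-2}$ coefficient of the $\sigma$-invariance equation is not really needed), square-zero, and of rank $1$ by Lemma \ref{lemma:miracle}(2); hence $y$ would have partition $[2^a 1^b]$ with $a=\rk y=1$, which is odd, again contradicting Theorem \ref{theorem:classify}. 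Both contradictions stem from the same even-multiplicity constraint, but your version avoids the intermediate step of proving $x^3=0$, so it is a touch more economical. One small redundancy: you list $[x,y]=0$ as one of the three relations from $g(t)g(-t)=I$, but it is automatic once $y=\tfrac12 x^2$; the paper records the equivalent pair $y=\tfrac12 x^2$, $x^4=0$ instead.
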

\begin{proof}
	Note that $\iota(g(t))=g(t)$ if and only if
	$$(I+xt^{-1}+yt^{-2})(I-xt^{-1}+yt^{-2})=I$$
	which is equivalent to $y=\dfrac{1}{2}x^2$ and $x^4=0$. Suppose that $\iota (g(t))=g(t)$. Observe that $\rk x^3 \leq \rk x^2 = \rk y=1$. If $\rk x^3=1$, then $\rk x^4 =\rk x^3=1$ which is impossible. Hence $x^3=0$. Since $\rk x^2=1$, $x\in \mathfrak{p}$ is nilpotent having the partition $[32^j 1^{2l-2j-3}]$ but this contradicts to Theorem \ref{theorem:classify}. This proves the first part.
	
	Assume that $g(t)\cdot e_0\in \mathcal{M}_{(21^{2j} 0^{\ell-2j-1})}$. We write 
	$$g(t)=At^{(2, 1^{2j}, 0^{2\ell-4j-2}, (-1)^{2j}, -2)} B$$ 
	where $A=\sum_{i=0}A_i t^i, B=\sum_{i=0}B_i t^i\in G(\mathcal{O})^\sigma$. In particular, $g(t)\in \mathrm{SL}_{2\ell}(\mathcal{O})t^{{\lambda}}\mathrm{SL}_{2\ell}(\mathcal{O})$ where ${\lambda}=(2, 1^{2j}, 0^{2\ell-4j-2}, (-1)^{2j}, -2)$. By Lemma \ref{lemma:miracleAH},
	\[\rk\begin{pmatrix}
	y & x\\
	0 & y
	\end{pmatrix}=\sum_{j=1}^{2\ell}\max\{-a_j,0\}=2j+2\]
	as desired.

\end{proof}

	Let $g(t)=I+xt^{-1}+yt^{-2}$. By Lemma \ref{lemma:iota},  we can write $\iota(g(t))=I+x't^{-1}+y't^{-2}$ for some matrices $x',y'$. Hence
	$$(I-xt^{-1}+yt^{-2})(I+x't^{-1}+y't^{-2})=I=(I+x't^{-1}+y't^{-2})((I-xt^{-1}+yt^{-2})$$
	which implies
\begin{equation}\label{equation:conditiony'}
	x'=x,\hspace{0.25 cm} x^2=y+y',\hspace{0.25 cm} xy=y'x,\hspace{0.25 cm} xy'=yx,\hspace{0.25 cm} yy'=y'y=0.
\end{equation}
	By Lemma \ref{lemma:miracle} and Lemma \ref{lemma:notfix}, $\rk y=\rk y'=1$ and $y'\neq y$. Since $\sigma (g(t))=g(t)$, $y'=Jy^T J^{-1}$  which means that $y$ and $y'$ are adjoint to each other.
	We set
	$$\mathcal{M}_{(21^{2j} 0^{\ell-2j-1})}^{\mathrm{I}}:=\{(I+xt^{-1}+yt^{-2})\cdot e_0\in \mathcal{M}_{(21^{2j} 0^{\ell-2j-1})}\mid L=L'\},$$
	$$\mathcal{M}_{(21^{2j} 0^{\ell-2j-1})}^{\mathrm{II}}:=\{(I+xt^{-1}+yt^{-2})\cdot e_0\in \mathcal{M}_{(21^{2j} 0^{\ell-2j-1})}\mid L\neq L'\},$$
	where $L=\im y$ and $L'=\im y'$.

The following lemma  will be used in the proofs of Lemma \ref{lem4.9} and Theorem \ref{theorem:piD}.
\begin{lemma}\label{lemma:adjointmap}
	Let $(\cdot ,\cdot)$ be a nondegenerate symmetric or skew-symmetric bilinear form on a vector space $V$ over a field $\mathbb{C}$ and let $T$ a linear map on $V$. Denote the adjoint of $T$ by $T^*$. Assume that $\im T=\im T^*$ and $\rk T=1$. Then $T$ is self-adjoint or skew-adjoint.
\end{lemma}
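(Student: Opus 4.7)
The plan is to exploit the rank-one structure of $T$. Since $\rk T = 1$, I would pick a nonzero $w \in V$ spanning $\im T$ and write $T(v) = \phi(v)\,w$ for some nonzero linear functional $\phi \in V^*$. The hypothesis $\im T^* = \im T = \mathbb{C} w$ then lets me write $T^*(u) = \psi(u)\,w$ for a second linear functional $\psi \in V^*$. The defining adjoint identity $(u, Tv) = (T^* u, v)$ becomes the separated-variable equation
\[
\phi(v)\,(u, w) \;=\; \psi(u)\,(w, v), \qquad \forall\, u, v \in V.
\]

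Next I would argue that the two linear functionals $u \mapsto (u, w)$ and $v \mapsto (w, v)$ on $V$ are both nonzero: if either vanished identically, then $w$ would lie in the left or right radical of the form, contradicting nondegeneracy. Picking $u_0, v_0$ with $(u_0, w) \neq 0$ and $(w, v_0) \neq 0$, the displayed identity, restricted once to $u = u_0$ and once to $v = v_0$, forces $\phi = c_1 (w, \cdot)$ and $\psi = c_2 (\cdot, w)$ for scalars $c_1, c_2$; substituting back in and using nondegeneracy once more yields $c_1 = c_2$. Writing $c$ for this common scalar, I obtain
\[
T(v) = c\,(w, v)\,w, \qquad T^*(u) = c\,(u, w)\,w.
\]

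Finally I invoke that the ambient bilinear form is either symmetric or skew-symmetric (this is the implicit setting throughout Section 3 and the application in Lemma \ref{lem4.9} and Theorem \ref{theorem:piD}, where the form is $\langle\,,\,\rangle$). In the symmetric case, $(u, w) = (w, u)$ gives $T^* = T$; in the skew-symmetric case, $(u, w) = -(w, u)$ gives $T^* = -T$. Thus $T$ is self-adjoint or skew-adjoint accordingly.

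The only nontrivial step is verifying that the two proportionality constants $c_1, c_2$ coincide, i.e.\ that the separated-variable identity admits a single global constant. This is a transparent computation once the nonvanishing of $(\cdot, w)$ and $(w, \cdot)$ is secured by nondegeneracy, so I do not anticipate a genuine obstacle; the whole lemma really is a linear-algebra exercise once the rank-one reduction is set up.
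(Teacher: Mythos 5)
Your proof is correct, and it takes a genuinely different route from the paper's. The paper first observes that $\ker T = (\operatorname{Im}T^*)^\perp$ and $\ker T^* = (\operatorname{Im}T)^\perp$, deduces $\ker T = \ker T^*$ from $\operatorname{Im}T = \operatorname{Im}T^*$, and uses this to show directly that $T^* = \lambda T$ for a scalar $\lambda$. It then applies the adjoint to the identity $T + T^* = (1+\lambda)T$ and invokes $(T+T^*)^* = T + T^*$ to force $\lambda \in \{1,-1\}$; that last step silently uses $T^{**}=T$, which holds precisely because the form is symmetric or skew-symmetric. Your argument is instead an explicit parametrization: writing $T(v) = c\,(w,v)\,w$ and $T^*(u) = c\,(u,w)\,w$ exhibits the scalar directly, and you read off (skew-)adjointness from the (skew-)symmetry of the form without ever invoking $T^{**}=T$. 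One small bonus of your route is that it actually gives a sharper conclusion than the lemma states or the paper's proof yields: it pins down that $T$ is \emph{self}-adjoint when the form is symmetric and \emph{skew}-adjoint when it is symplectic, rather than merely that one of the two cases occurs. (In the paper's subsequent applications --- Lemma~\ref{lem4.9} and Theorem~\ref{theorem:piD} --- this dichotomy is resolved by extra information from the context, so both versions suffice, but your refinement would shorten those downstream arguments slightly.) You are right to flag that the hypothesis ``symmetric or skew-symmetric'' is not explicitly in the lemma statement but is essential; the paper's proof needs it just as yours does.
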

\begin{proof}
	It is easy to see that $\ker T=(\im T^*)^\perp$ and $\ker T^*=(\im T)^\perp$. Say that $\im T=\mathbb{C}v$ and $\im T^*=\mathbb{C}v'$ for some $v,v'\in V$. Then $Tw=v$ for some $w\in V$. Since $\im T=\im T^*$, we have $T^*w=\lambda v$ for some $\lambda\in \mathbb{C}$. Let $u\in V$. Then $Tu=kv$ for some $k\in\mathbb{C}$. Since $T(u-kw)=0$,
	$u-kw\in \ker T= (\im T^*)^\perp=(\im T)^\perp=\ker T^*$. Hence $T^*u=T^*(kw)=k\lambda v=\lambda Tu$. Since $u$ is arbitrary, $T^*=\lambda T$. Consider $T+T^*=(1+\lambda)T$. Then
	$$(1+\lambda)T=T+T^*=(T+T^*)^*=(1+\lambda)T^*.$$
	Therefore $1+\lambda=0$ or $T=T^*$ which means that $T$ is skew-adjoint or self-adjoint. 
\end{proof}

\begin{lemma}
\label{lem4.9}
	If $(I+xt^{-1}+yt^{-2})\cdot e_0\in \mathcal{M}_{(21^{2j} 0^{\ell-2j-1})}^{\mathrm{I}}$, then $y'=-y$.
\end{lemma}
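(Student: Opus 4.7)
The plan is to apply Lemma \ref{lemma:adjointmap} directly to the rank-$1$ operator $y$, combined with the non-fixed-point condition from Lemma \ref{lemma:notfix}(1).

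First, I would recall the setup established in the paragraph before the definition of $\mathcal{M}_{(21^{2j} 0^{\ell-2j-1})}^{\mathrm{I}}$: from Lemma \ref{lemma:miracle} and Lemma \ref{lemma:notfix}, $\rk y = \rk y' = 1$, and the $\sigma$-invariance of $g(t)$ forces $y' = Jy^T J^{-1}$, i.e., $y'$ is precisely the adjoint $y^*$ of $y$ with respect to the symplectic form $\langle\cdot,\cdot\rangle$. The superscript I indicates that $L = L'$, where $L = \im y$ and $L' = \im y'$; thus the hypothesis translates into $\im y = \im y^*$.

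Next, I would apply Lemma \ref{lemma:adjointmap} to $T = y$. Its hypotheses are satisfied, so the conclusion is that $y$ is self-adjoint or skew-adjoint, i.e., $y^* = y$ or $y^* = -y$. Translating back, $y' = y$ or $y' = -y$.

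Finally, I would rule out $y' = y$ using Lemma \ref{lemma:notfix}(1). Since $(I+xt^{-1}+yt^{-2})\cdot e_0 \in \mathcal{M}''$, we have $\iota(g(t)) \neq g(t)$. On the other hand, $\iota(g(t)) = I + x't^{-1} + y't^{-2}$ with $x' = x$ by the relations in (\ref{equation:conditiony'}), so the inequality $\iota(g(t)) \neq g(t)$ is equivalent to $y' \neq y$. This excludes the self-adjoint case and leaves $y' = -y$, as claimed. The argument is essentially a direct combination of two already-established lemmas, so there is no real obstacle beyond correctly identifying $L, L'$ with the images of $y, y'$.
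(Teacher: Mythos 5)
Your proof is correct and follows essentially the same route as the paper: identify $L,L'$ with $\im y,\im y'$, apply Lemma \ref{lemma:adjointmap} to $T=y$ using the equal-image hypothesis, and eliminate the self-adjoint case via the inequality $y\neq y'$ established earlier from Lemma \ref{lemma:notfix}(1). The only difference is that you spell out the derivation of $y\neq y'$, which the paper's proof simply cites from the preceding paragraph.
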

\begin{proof}
	We know that $y\neq y'$ are adjoint to each other, they have the same images, and $\rk y=1$. By Lemma \ref{lemma:adjointmap}, $y$ is skew-adjoint, i.e., $y'=-y$.
\end{proof}
	
\begin{theorem}\label{theorem:pi A2l-1}\hfill
	\begin{enumerate}
		\item If $\ell$ is even, then
		$$\pi(\mathcal{M}_{(21^{2j} 0^{\ell-2j-1})}^{\mathrm{I}})=[2^{2j} 1^{2\ell-4j}]\cup [2^{2j+2} 1^{2\ell-4j-4}]$$
		for $j=0,1,...,\frac{\ell-2}{2}$. If $\ell$ is odd, then
		\[\pi(\mathcal{M}_{(21^{2j} 0^{\ell-2j-1})}^{\mathrm{I}})=
		\begin{cases}
		[2^{2j} 1^{2\ell-4j}]\cup [2^{2j+2} 1^{2\ell-4j-4}] & \hspace{0.05cm} \text{if } \ell\geq 3, \, 0\leq  j \leq  \frac{\ell-3}{2};\\
		[2^{\ell-1} 1^{2}] & \hspace{0.05cm} \text{if } j=\frac{\ell-1}{2}.
		\end{cases}
		\]
		\item  When $\ell \geq 3$, for $j=1,..., \lfloor \frac{\ell-1}{2}\rfloor$, we have
		\[\pi(\mathcal{M}_{(21^{2j} 0^{\ell-2j-1})}^{\mathrm{II}})=
		\begin{cases}
		[3^2 1^{2\ell-6}] & \hspace{0.05cm} \text{if } j=1;\\
		[3^2 2^{2j-2} 1^{2\ell-4j-2}]\cup [3^2 2^{2j-4} 1^{2\ell-4j+2}] & \hspace{0.05cm} \text{if } \ell\geq 4, \, 2\leq  j  \leq \lfloor \frac{\ell-1}{2}\rfloor.
		\end{cases}
		\]
		Moreover, for any $\ell \geq 1$,  $\mathcal{M}_{(20^{\ell-1})}^{\mathrm{II}}$ is empty.
	\end{enumerate}
\end{theorem}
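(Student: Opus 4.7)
The plan is to exploit the relations (\ref{equation:conditiony'}) among $x,y,y'$ imposed by $\sigma$-invariance of $g(t)=I+xt^{-1}+yt^{-2}$, together with the rank constraint $\rk\begin{pmatrix} y & x \\ 0 & y\end{pmatrix}=2j+2$ from Lemma \ref{lemma:notfix}(2), to determine the Jordan type of $x=\pi(g(t)\cdot e_0)$ in each stratum. The key algebraic observation is that in both strata the block matrix $N:=\begin{pmatrix} y & x \\ 0 & y\end{pmatrix}$ satisfies $N^2=0$, so $\rk N$ equals $\dim\im N$, which can be computed by a direct column-space analysis.

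For part (1), Lemma \ref{lem4.9} gives $y'=-y$, so $x^2=y+y'=0$ and $x$ has partition $[2^{2r},1^{2\ell-4r}]$ with $\rk x=2r$ by Theorem \ref{theorem:classify}. From $yy'=0$ we obtain $y^2=0$, and from $xy=y'x=-yx$ we get $xy+yx=0$, so $N^2=0$. Writing $\im N=(\im y\oplus 0)+\{(xv,yv):v\in V\}$ and applying inclusion-exclusion gives
\[
\rk N=1+2\ell-\dim(\ker x\cap \ker y)-\delta,\qquad \delta:=\dim\bigl(x(\ker y)\cap \im y\bigr)\in\{0,1\}.
\]
Since $\dim(\ker x\cap \ker y)$ equals $2\ell-2r$ or $2\ell-2r-1$ according to whether $\ker x\subseteq \ker y$, setting $\rk N=2j+2$ forces by parity $(r,\delta)=(j,0)$ or $(j+1,1)$, giving $x$ of type $[2^{2j}1^{2\ell-4j}]$ or $[2^{2j+2}1^{2\ell-4j-4}]$. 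When $\ell$ is odd and $j=(\ell-1)/2$, the second partition is invalid, leaving only $[2^{\ell-1}1^2]$.

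For part (2), $\im y\ne \im y'$ together with $\rk y=\rk y'=1$ makes $y,y'$ linearly independent, so $\rk x^2=\rk(y+y')=2$. The nilpotency of $x$ (established in parallel with our orbit description, in the same way that Theorem \ref{theorem:cellandorbit} asserts $\pi(\mathcal M_{\bar\lambda})\subseteq \mathcal N_{\mathfrak p}$) combined with $yy'=y'y=0$ yields $y^2=(y')^2=0$ and $x^4=0$. The identity $xy+yx=(y+y')x=x^3$, the bound $\rk x^3\le \rk x^4=0$, and the even-multiplicity requirement of Theorem \ref{theorem:classify} force the Jordan type of $x$ to be $[3^2,2^{2b},1^{2c}]$ with $c=\ell-3-2b$, and yield $x^3=0$, hence $N^2=0$. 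Applying the same rank formula with $\rk x=4+2b$ and $\rk N=2j+2$ forces $b\in\{j-1,j-2\}$ subject to $b\ge 0$. For $j=0$ neither value is admissible, so $\mathcal{M}^{\mathrm{II}}_{(20^{\ell-1})}$ is empty; for $j=1$ only $b=0$ works, giving $[3^2 1^{2\ell-6}]$; for $j\ge 2$ both values yield valid partitions.

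Surjectivity is proved by explicit construction: for each target orbit one fixes a symplectic Jordan basis adapted to a chosen nilpotent $x\in \mathfrak p$ of the desired type, then selects a rank-one $y$ with $y+y'=x^2$ and the inclusion conditions $\ker x\subseteq \ker y$, $\im y\subseteq x(\ker y)$ (or their negations) dictated by the parity analysis above, so that $\rk N=2j+2$; Lemma \ref{lemma:miracle} then places $g(t)\cdot e_0$ in $\mathcal{M}_{(21^{2j}0^{\ell-2j-1})}$, and Lemma \ref{lemma:notfix}(1) confirms membership in $\mathcal{M}''$ (Type I versus Type II being determined by whether $\im y=\im y'$). The main obstacle is this final construction step: designing $y$ so that the Jordan type of $x$, the Type I/II dichotomy, and all of the relations (\ref{equation:conditiony'}) are simultaneously as prescribed, which requires a careful case-by-case linear-algebraic argument.
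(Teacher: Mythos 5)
Your analysis of the rank constraint is sound and in fact a more refined version of the paper's: where the paper simply observes
\[
\rk x\ \le\ \rk\begin{pmatrix}y & x\\ 0 & y\end{pmatrix}\ \le\ 2\rk y+\rk x=\rk x+2,
\]
you derive the exact formula $\rk N=\rk x+1+\epsilon-\delta$ with $\epsilon,\delta\in\{0,1\}$ and read off the same two possibilities for the Jordan type via parity; both lead to the same dichotomy $k\in\{j,j+1\}$ (Type I) or $k\in\{j-1,j-2\}$ (Type II). Your deduction of emptiness of $\mathcal{M}^{\mathrm{II}}_{(20^{\ell-1})}$ and the collapse of invalid partitions at the boundary cases match the paper.

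The genuine gap is in part (2), at the step ``the nilpotency of $x$ (established in parallel with our orbit description, in the same way that Theorem \ref{theorem:cellandorbit} asserts $\pi(\mathcal{M}_{\bar\lambda})\subseteq\mathcal{N}_{\mathfrak p}$) \dots yields $y^2=(y')^2=0$ and $x^4=0$.'' This is circular: Theorem \ref{theorem:cellandorbit} is \emph{deduced} from the present theorem (the paper explicitly says ``This theorem follows from \dots Theorem \ref{theorem:pi A2l-1} \dots''), so you cannot invoke it to obtain nilpotency of $x$ for elements of $\mathcal{M}^{\mathrm{II}}$. And nilpotency is not a formal consequence of the relations in (\ref{equation:conditiony'}) alone: from $yy'=y'y=0$ and $x^2=y+y'$ one only gets $x^{2k}=y^k+(y')^k$ for all $k\ge 1$, which does not force $y^k$ or $(y')^k$ to vanish; a rank-one $y$ need not satisfy $y^2=0$. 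The paper fills this hole with a specifically symplectic argument: writing $U=\im x^2=L+L'$ and $x^2|_U=ab\cdot\mathrm{Id}$, it first shows that $ab\ne 0$ would force $\langle\cdot,\cdot\rangle|_{U\times U}$ to be nondegenerate, then uses $L\subset L^\perp$, $L'\subset(L')^\perp$, and the self-adjointness of $x$ to derive $ab\langle v,v'\rangle=-ab\langle v,v'\rangle$, hence $ab=0$, hence $x^4=0$. Once $x^4=0$ is established directly in this way, your subsequent chain ($y^2+(y')^2=x^4=0$, then $y^2=(y')^2=0$ from $\im y\ne\im y'$, then $x^3=0$ by the even-multiplicity/partition constraint, then the rank bookkeeping) does go through. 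You should also be aware that the surjectivity half --- producing, for each target orbit and each value of $(\epsilon,\delta)$, a pair $(x,y)$ satisfying all of (\ref{equation:conditiony'}) and landing in the correct stratum --- is the larger portion of the paper's proof; the explicit block matrices $x_j$, $x'_j$, $x_{j-1}$, $x'_{j-2}$ and the rank-one second-order terms are not incidental but are doing the real work there, and your proposal leaves this step as an acknowledged obstacle rather than resolving it.
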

\begin{proof}
	Let $g(t)\cdot e_0=(I+xt^{-1}+yt^{-2})\cdot e_0\in\mathcal{M}_{(21^{2j} 0^{\ell-2j-1})}^{\mathrm{I}}$. By Lemma \ref{lemma:notfix},
	$$\rk x \leq \rk \begin{pmatrix}
	y & x\\
	0 & y
	\end{pmatrix}=2j+2\leq 2\rk y +\rk x=2+\rk x.$$
	Since $x^2=y+y'=0$ (by Lemma \ref{lem4.9}), $x$ is nilpotent whose partition is $[2^{2k}1^{2\ell-4k}]$ so that $\rk x=2k$. Hence $k=j$ or $j+1$. If $\ell$ is odd and $j=\frac{\ell-1}{2}$, then $k=j$.
	
	Let $E_{ij}\in \Mat_{2\ell \times 2\ell}$ be the matrix which has 1 at the entry $i,j$ and 0 elsewhere. 
	For each $j=0,1,\dots,\lfloor \frac{\ell-1}{2}\rfloor$, let
	$$x_j=\diag(0, J_2,..,J_2, 0_{2\ell-4j-2}, -J_2,.., -J_2, 0)$$
	where there are $j$ blocks of $J_2=\begin{pmatrix}
	0 & 1\\
	0 & 0
	\end{pmatrix}$ and $j$ blocks of $-J_2$, and $0_{2\ell-4j-2}$ is the square zero matrix of size $2\ell-4j-2$.
	Then $x_j\in\mathfrak{p}$ is nilpotent and has the partition $ [2^{2j} 1^{2\ell-4j}]$. 
	It is easy to check that $g(t)=I+x_j t^{-1}+E_{1,2\ell} t^{-2}\in G(\mathcal{K})^\sigma$ and $\iota(g(t))=I+x_jt^{-1}-E_{1,2\ell}t^{-2}$. By (\ref{equation:Cartan}), $g(t)\cdot e_0\in \mathcal{G}r_{\bar{\lambda}}$ for some $\bar{\lambda}=(a_1,...,a_\ell)\in X_*(T)_\sigma^+$ with $a_1\geq a_2 ... \geq a_\ell \geq0$ and $\sum a_i$ is even. By Lemma \ref{lemma:miracle}, since $\rk E_{1,2\ell}=1$, $\bar{\lambda}=(21^{2k} 0^{\ell-2k-1})$ for some $k$. By Lemma \ref{lemma:notfix},
	\[2k+2=\rk\begin{pmatrix}
	E_{1,2\ell} & x_j\\
	0 & E_{1,2\ell}
	\end{pmatrix}=2j+2.\]
	Then $(I+x_jt^{-1}+E_{1,2\ell}t^{-2})\cdot e_0\in \mathcal{M}_{(21^{2j} 0^{\ell-2j-1})}^\mathrm{I}$. For $j=0,1,\dots,\lfloor \frac{\ell-2}{2}\rfloor$, let
	$$x'_j=x_j+E_{1,2j+2}-E_{2\ell-2j-1,2\ell}.$$
	Then $x'_j\in\mathfrak{p}$ is nilpotent and has the partition $ [2^{2j+2} 1^{2\ell-4j-4}]$. Similarly, one can show that $(I+x'_jt^{-1}+E_{1,2\ell}t^{-2})\cdot e_0\in \mathcal{M}_{(21^{2j} 0^{\ell-2j-1})}^\mathrm{I}$. Since $\pi$ is $K$-invariant, this proves the first part.
	
	Now, we prove the second part. Let $g(t)\cdot e_0=(I+xt^{-1}+yt^{-2})\cdot e_0\in\mathcal{M}_{(21^{2j} 0^{\ell-2j-1})}^{\mathrm{II}}$. Set $U=L+L'$. Since $y\neq y'$, $\dim U=2$ and $U=\im x^2$. Assume that $L=\mathbb{C}v, L'=\mathbb{C}v'$. By (\ref{equation:conditiony'}), we have $xy=y'x$ and $xy'=yx$. Hence $xv=bv'$ and $xv'=av$ for some $a,b\in \mathbb{C}, v,v'\in\mathbb{C}^{2l}$. Then
	\[x\big|_U=
	\begin{pmatrix}
	0 & a\\
	b & 0
	\end{pmatrix},\hspace{0.5 cm}
	x^2\big|_U=
	\begin{pmatrix}
	ab & 0\\
	0 & ab
	\end{pmatrix}
	\]
	Suppose that $ab\neq 0$. We will show that $\langle\cdot , \cdot\rangle\big|_{U\times U}$ is nondegenerate.  Let $u$ be a vector in $ \mathbb{C}^{2\ell}$ such that $\langle x^2 u , x^2 v\rangle=0$ for all $v\in \mathbb{C}^{2\ell}$. Since $x^2$ is self-adjoint, $\langle x^4 u,v\rangle=0$ for all $v$. Therefore, $x^4u=0$ and so $x^2 u\in (\ker x^2\big)\cap U= \ker( x^2|_U) $. By the assumption that $ab\not=0$, $\ker( x^2|_U) =0$. Thus $x^2u=0$. This concludes that $\langle\cdot , \cdot\rangle\big|_{U\times U}$ is nondegenerate.
	
	 Since $yy'=0=y'y$, we have $L'\subset \ker y$ and $L\subset\ker y'$. Recall that $y,y'$ are adjoint to each other. It follows that $\ker y=(L')^{\perp}$ and $\ker y'=L^{\perp}$. Thus, $L'\subset (L')^\perp$ and $L\subset L^\perp$.  This implies $\langle v,v\rangle=\langle v',v'\rangle=0$. By the non-degeneracy of $\langle\cdot , \cdot\rangle\big|_{U\times U}$,   we must have $\langle v,v'\rangle\neq 0$. Since $x$ is self-adjoint with respect to the symplectic form $\langle , \rangle$, we have 
	$$ab\langle v,v'\rangle=\langle v,x^2 v'\rangle=\langle xv,xv'\rangle=\langle bv',av\rangle=-ab\langle v,v'\rangle$$
	which implies $ab=0$, a contradiction.  This shows that we must have $x^2=0$ on $U=\im x^2$, which means $x^4=0$. Since $x\in\mathfrak{p}$ and $\rk x^2=2$, by Theorem \ref{theorem:classify}, $x$ is nilpotent having the partition $[3^2 2^{2k} 1^{2\ell-4k-6}]$. By Lemma \ref{lemma:notfix},
	$$\rk x \leq \rk \begin{pmatrix}
	y & x\\
	0 & y
	\end{pmatrix}=2j+2\leq 2\rk y +\rk x=2+\rk x.$$
	Since $\rk x=2k+4$, $k=j-1$ or $j-2$. Here we see that $j\neq0$ and hence $\mathcal{M}_{(20^{\ell-1})}^{\mathrm{II}}$ is empty. When $j=1$, we see that $k=0$. Let
	\[J_3=\begin{pmatrix}
	0 & 1 & 0\\
	0 & 0 & 1\\
	0 & 0 & 0
	\end{pmatrix},\hspace{0.2 cm} J_2=\begin{pmatrix}
	0 & 1\\
	0 & 0
	\end{pmatrix}.\]
	For each $j=1,...,\lfloor \frac{\ell-1}{2}\rfloor$, let
	$$x_{j-1}=\diag(J_3,J_2,...,J_2,0_{2\ell-4j-2},-J_2,...,-J_2,-J_3)$$
	where there are $j-1$ blocks of $J_2$, and $j-1$ blocks of $-J_2$. Then $x_{j-1}\in\mathfrak{p}$ is nilpotent having the partition $[3^2 2^{2j-2} 1^{2\ell-4j-2}]$. Note that $g(t):=1+x_{j-1}t^{-1}+E_{13}t^{-2}\in G(\mathcal{K})^\sigma$ and $\iota(g(t))=1+x_{j-1}t^{-1}+E_{2\ell-2,2\ell}t^{-2}$. By (\ref{equation:Cartan}), $g(t)\cdot e_0\in \mathcal{G}r_{\bar{\lambda}}$ for some $\bar{\lambda}=(a_1,...,a_\ell)\in X_*(T)_\sigma^+$ with $a_1\geq a_2 ... \geq a_\ell \geq0$ and $\sum a_i$ is even. By Lemma \ref{lemma:miracle}, since $\rk E_{13}=1$, $\bar{\lambda}=(21^{2k} 0^{\ell-2k-1})$ for some $k$. By Lemma \ref{lemma:notfix},
	\[2k+2=\rk\begin{pmatrix}
	E_{13} & x_{j-1}\\
	0 & E_{13}
	\end{pmatrix}=2j+2.\]
	Then $(I+x_jt^{-1}+E_{13}t^{-2})\cdot e_0\in \mathcal{M}_{(21^{2j} 0^{\ell-2j-1})}^\mathrm{II}$. For $j=2,...,\lfloor \frac{\ell-1}{2}\rfloor$, let
	$$x'_{j-2}=\diag(0,J_3,J_2,...,J_2,0_{2\ell-4j},-J_2,...,-J_2,-J_3,0)$$
	where there are $j-2$ blocks of $J_2$, and $j-2$ blocks of $-J_2$. Then $x'_{j-2}\in\mathfrak{p}$ is nilpotent having the partition $[3^2 2^{2j-4} 1^{2\ell-4j+2}]$. One can check that $h(t):=1+x'_{j-2}t^{-1}+(E_{24}+E_{1,2\ell})t^{-2}\in G(\mathcal{K})^\sigma$ and $\iota(h(t))=1+x'_{j-2}t^{-1}+(E_{2\ell-3,2\ell-1}-E_{1,2\ell})t^{-2}$. Similarly, $h(t)\cdot e_0\in \mathcal{G}r_{\bar{\lambda}}$ where $\bar{\lambda}=(21^{2k} 0^{\ell-2k-1})$ for some $k$.  By Lemma \ref{lemma:notfix},
	\[2k+2=\rk\begin{pmatrix}
	E_{24}+E_{1,2\ell} & x'_{j-2}\\
	0 & E_{24}+E_{1,2\ell}
	\end{pmatrix}=2j+2.\]
	Then $(I+x'_{j-2}t^{-1}+(E_{24}+E_{1,2\ell})t^{-2})\cdot e_0\in \mathcal{M}_{(21^{2j} 0^{\ell-2j-1})}^\mathrm{II}$.
\end{proof}

In the following proposition, we describe the reduced fibers of $\pi: \mathcal{M}\to \pi (\mathcal{M})$. For any $x\in \pi (\mathcal{M})$, let $\pi^{-1}(x)_{\rm red}$ denote the reduced fiber of $\pi$ over $x$. 
\begin{prop}
\label{prop:fiber1}
	For any $x\in  \pi (\mathcal{M})$,  we have
	\begin{equation}
	\label{fiber1}
\pi^{-1}(x)_{\rm red}\cong\{z\in   \mathfrak{sp}_{2\ell}   \mid xz+zx=0, z^2=0, \rk(z+\frac{1}{2}x^2)\leq 1\}.  \end{equation}
	 In particular,
	$\pi^{-1}(0)_{\rm red }$ is isomorphic to the closure of nilpotent orbit $\mathcal{O}_{[21^{2\ell-2}]}$ in $\mathfrak{sp}_{2\ell}$ and $\dim\pi^{-1}(0)_{\rm red}=2\ell+1$.
\end{prop}
\begin{proof}
	Fix a nilpotent element $x$ in $ \pi (\mathcal{M})$. 	Note that $(1+xt^{-1}+yt^{-2})\cdot e_0\in \mathcal{M}$ if and only if $\det (1+xt^{-1}+yt^{-2})=1,\rk y\leq 1$ and
	\begin{equation}\label{equation:conditionA2l-1}
	x^TJ -Jx=0,\quad -x^TJ x+y^TJ + Jy=0,\hspace{0.5 cm} x^T Jy-y^T Jx=0,\hspace{0.5 cm} y^T Jy=0.
	\end{equation}
	Set $z=y-\frac{1}{2}x^2$, (\ref{equation:conditionA2l-1}) is equivalent to
	\begin{equation}\label{equation:z A2l-1}
	\quad z\in\mathfrak{k}=\mathfrak{sp}_{2\ell} ,\quad xz+zx=0,\quad z^2=0.
	\end{equation}
	When $xz+zx=0$ and $z^2=0$,  $xt^{-1}+(z+\frac{1}{2}x^2)t^{-2}$ is nilpotent in $\mathrm{Mat}_{2\ell \times 2 \ell}(\mathcal{K})$. Thus, $\det (1+xt^{-1}+(z+\frac{1}{2}x^2)t^{-2})=1$. Therefore, the isomorphism $(\ref{fiber1})$ holds.  In particular, when $x=0$ we have 
	\[  \pi^{-1}(0)_{\rm red}=\{ z\in \mathfrak{sp}_{2\ell} \mid  z^2=0,\rk z\leq 1\},  \]
and the dimension, cf.\,\cite[Corollary 6.1.4]{CM}, is given by
	$$\dim \pi^{-1}(0)_{\rm red}=\dim \mathcal{O}_{[21^{2\ell-2}]}= (2\ell^2+\ell)-\frac{1}{2}((2\ell-1)^2+1^2)-\frac{1}{2}(2\ell-2)=2\ell+1.$$
\end{proof}

\begin{figure}[htb]
	\[\hspace{2.4 cm}
	\adjustbox{scale=0.8}{
	\begin{tikzcd}
	& {\mathcal{M}_{(21^4)}^\mathrm{I}} \\
	& {\mathcal{M}_{(21^4)}^\mathrm{II}} &&&& {[3^2 2^2]} \\
	&&&&&&&&&&&&& {} \\
	&& {\mathcal{M}_{(21^2 0^2)}^\mathrm{I}} &&& {[3^2 1^4]} \\
	&& {\mathcal{M}_{(21^2 0^2)}^\mathrm{II}} \\
	{\mathcal{M}_{(1^4 0)}} &&&&& {[2^4 1^2]} \\
	&& {\mathcal{M}_{(20^4)}^\mathrm{I}} \\
	&& {\mathcal{M}_{(20^4)}^\mathrm{II}=\emptyset} &&& {[2^2 1^6]} \\
	\\
	& {\mathcal{M}_{(1^2 0^3)}} &&&& {0} \\
	\\
	& {0}
	\arrow[from=2-2, to=4-6]
	\arrow[from=10-2, to=6-1, no head]
	\arrow[from=5-3, to=4-6]
	\arrow[from=10-6, to=8-6, no head]
	\arrow[from=4-6, to=2-6, no head]
	\arrow[from=2-2, to=2-6]
	\arrow[from=4-3, to=6-6]
	\arrow[from=2-2, to=4-3, no head]
	\arrow[from=5-3, to=7-3, no head]
	\arrow[from=12-2, to=10-6]
	\arrow[from=10-2, to=12-2, no head]
	\arrow[from=2-2, to=6-1, no head]
	\arrow[from=5-3, to=6-1, no head]
	\arrow[from=8-3, to=10-2, no head]
	\arrow[from=4-6, to=6-6, no head]
	\arrow[from=6-6, to=8-6, no head]
	\arrow[from=1-2, to=6-6]
	\arrow[from=4-3, to=8-6]
	\arrow[from=7-3, to=10-6]
	\arrow[from=7-3, to=8-6]
	\arrow["{\sim}", from=6-1, to=6-6]
	\arrow["{\sim}", from=10-2, to=8-6]
	\end{tikzcd}   }    
	\]
	\caption{$\mathcal{M}_{\bar{\lambda}}$ for small dominant weight $\bar{\lambda}$ and their image under the map $\pi$ in type $(X_N,r)=(A_9,2)$}\label{figure:(A_9,2)}
\end{figure}
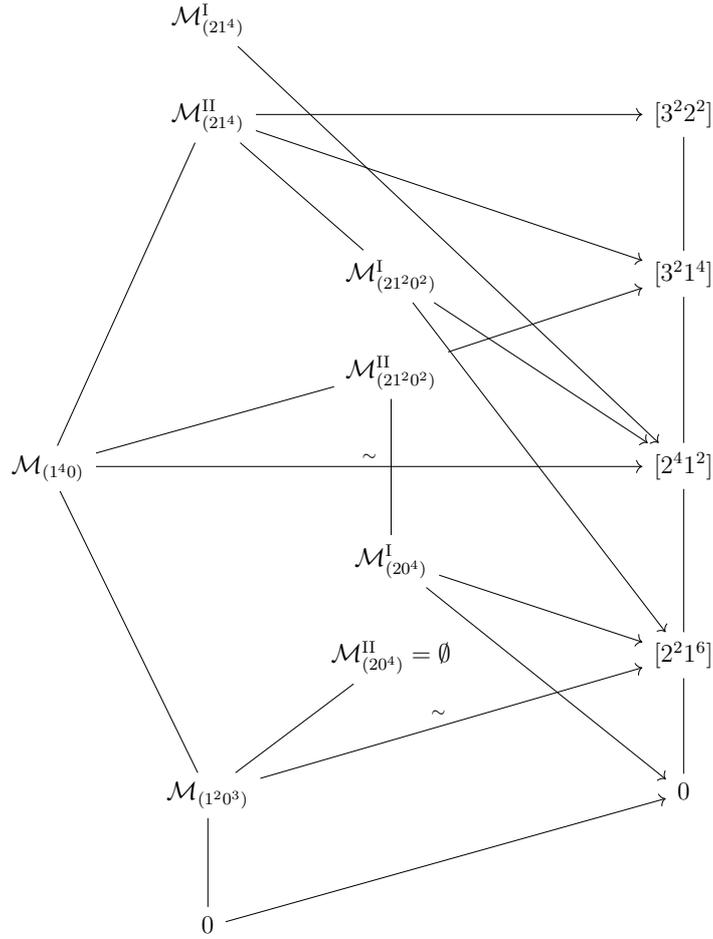

	In \cite[Theorem 1.2]{AH}, they proved that there are finitely many $G$-orbits in $\mathrm{Gr}_{\lambda}\cap \mathrm{Gr}_0^-$ for small dominant coweight $\lambda$. In the case of  $(A_{2\ell},2)$, it is easy to see that $K$ acts transitively on $\mathcal{M}_{(1^j0^{\ell-j})}$ and hence there are finitely many $K$-orbits in $\mathcal{M}$. For the case $(A_{2\ell-1},2)$, it is not obvious to determine if there are finitely many $K$-orbits in $\mathcal{M}_{(21^j 0^{\ell-j-1})}$. If $g(t)=1+xt^{-1}+(z+\frac{1}{2}x^2)t^{-2}\in\mathcal{M}_{(21^j 0^{\ell-j-1})}$, then $g(t)$ satisfies (\ref{equation:z A2l-1}).
	If the action of $K$ on the following anti-commuting nilpotent variety
	$$\{(x,z)\in \mathcal{N}_{\mathfrak{p}}\times \mathcal{N}_{\mathfrak{k}}\mid xz+zx=0\}$$
	by diagonal cojugation has finitely many orbits, then there are finitely many $K$-orbits in $\mathcal{M}_{(21^j 0^{\ell-j-1})}$.

\begin{example}
	Consider the case $(X_N,r)=(A_9,2)$ . In this case, $G=\mathrm{SL}_{10}$ and $\mathfrak{g}=\mathfrak{sl}_{10}=\mathfrak{k}\oplus \mathfrak{p}$. The diagram as shown in Figure \ref{figure:(A_9,2)} describes the image of $\mathcal{M}_{\bar{\lambda}}$ for each small dominant weight $\bar{\lambda}$. For instance, $\mathcal{M}_{(21^2 0^2)}$ consists of two parts, $\mathcal{M}_{(21^2 0^2)}^\mathrm{I}$ and $\mathcal{M}_{(21^2 0^2)}^\mathrm{II}$. By Theorem \ref{theorem:pi A2l-1}, $\pi(\mathcal{M}_{(21^2 0^2)}^\mathrm{I})$ is precisely the union of two nilpotent orbits $[2^4 1^2]$ and $[2^2 1^6]$ in $\mathfrak{p}$ while $\pi(\mathcal{M}_{(21^2 0^2)}^\mathrm{II})$ is the single nilpotent orbit $[3^2 1^4]$ in $\mathfrak{p}$. Since $(21^2 0^2)\succeq (1^4 0)$, $\mathcal{M}_{(21^2 0^2)}\succeq\mathcal{M}_{(1^4 0)}$. Similarly, $\mathcal{M}_{(21^2 0^2)}\succeq\mathcal{M}_{(2 0^4)}$. According to the table in Theorem \ref{theorem:cellandorbit}, the image of certain $\mathcal{M}_{\bar{\lambda}}$ is a union of 4 nilpotent orbits. It does not happen in this case since $\ell=5$ is not large enough. In the case of $(X_N,r)=(A_{13},2)$, $\pi(\mathcal{M}_{(21^4 0^2)})$ is a union of nilpotent orbits $[2^4 1^6], [2^6 1^2], [3^2 2^2 1^4], [3^2 1^8]$ in $\mathfrak{p}\subset \mathfrak{sl}_{14}$.
\end{example}

\subsection{Case $(X_N,r)=(D_{\ell+1},2)$}

In this case, it is more convenient to work with $G=\mathrm{SO}_{2\ell+2}$ and $\sigma$ is a diagram automorphism on $G$. It is known that $G^\sigma \simeq \mathrm{SO}_{2\ell+1}\times \{\pm I \}$.   
 Let $G(\mathcal{O})^{\sigma, \circ}   $ denote the identity component of the group $G(\mathcal{O})^{\sigma}$.   Then, the action of  ${\rm Spin}_{2\ell+2}(\mathcal{O})^\sigma$  on the twisted affine Grassmanian $\mathcal{G}r$ of   ${{\rm Spin}_{2n+2}}$ factors through $G(\mathcal{O})^{\sigma, \circ}   $. Let $G(\mathcal{O}^-)_0^\sigma$ be the kernel of the evaluation map $G(\mathcal{O}^- )^\sigma\to G^\sigma$. The action of ${\rm Spin}_{2\ell+2}(\mathcal{O}^-)_0$ on $\mathcal{G}r$ factors through $G(\mathcal{O}^-)_0^\sigma$.  Hence, the opposite open Schubert cell $\mathcal{G}r_0^-$ is a $G(\mathcal{O}^-)_0^\sigma$-orbit.    In fact, $\mathcal{G}r$ is naturally the neutral component of the twisted affine Grassmannian associated to $(G,\sigma)$, whose definition is a bit more involved.




We can realize the group $G$ as $\{g\in {\rm SL}_{2\ell+2} |  gJg^T=J \}$, and 
	the Lie algebra of $G$ as $\mathfrak{g}=\mathfrak{so}_{2\ell+2}(J)=\{x\in \mathfrak{gl}_{2n}\mid Jx+x^T J=0\}$ where
	\[J=\begin{pmatrix}
	&  &  &  &  & 1\\ 
	&  &  &  & 1 & \\ 
	&  &  & 1 &  & \\ 
	&  & \reflectbox{$\ddots$} &  &  &  & \\ 
	& 1 &  &  &  & \\ 
	1 &  &  &  &  & 
	\end{pmatrix}.\]
	The diagram automorphism $\sigma$ of order 2 on $\mathfrak{g}$ can be given by $\sigma(x)=wxw$ where
	$$w=\diag\left(I_{\ell},\begin{pmatrix}
	0 & 1\\
	1 & 0
	\end{pmatrix}, I_{\ell}\right).$$
	The diagram automorphism $\sigma$ on $G$ is also defined in the same way. We also have the decomposition $\mathfrak{g}=\mathfrak{k}\oplus \mathfrak{p}$. Let $K$ be the identity component of $G^\sigma$. $K$ has Lie algebra $\mathfrak{k}$ and acts on $\mathfrak{p}$ by conjugation. It can be checked that $J=A^T A$ where
	\[A=\begin{pmatrix}
	\frac{1}{\sqrt{2}} & 0 & \cdots & \cdots & 0 & \frac{1}{\sqrt{2}}\\ 
	0 & \ddots & \cdots & \cdots & \reflectbox{$\ddots$} & 0\\ 
	\vdots & \vdots & \frac{1}{\sqrt{2}} & \frac{1}{\sqrt{2}} & \vdots & \vdots\\ 
	\vdots & \vdots & \frac{i}{\sqrt{2}} & -\frac{i}{\sqrt{2}} & \vdots & \vdots\\ 
	0 & \reflectbox{$\ddots$} & \cdots  & \cdots & \ddots & 0\\ 
	\frac{i}{\sqrt{2}} & 0 & \cdots & \cdots & 0 & -\frac{i}{\sqrt{2}}
	\end{pmatrix}.
	\]
	Another realization of $\mathfrak{so}_{2\ell+2}$ is $\mathfrak{so}_{2\ell+2}(I)=\{x\in \mathfrak{gl}_{2\ell}\mid x+x^T =0\}$.
	There exists an isomorphism from $\mathfrak{so}_{2\ell+2}(J)$ to $\mathfrak{so}_{2\ell+2}(I)$ given by $x\mapsto AxA^{-1}$. Under $\mathfrak{so}_{2\ell+2}(I)$, the diagram automorphism $\sigma_0$ is defined by $\sigma_0(x)=w_0 x w_0$ where $w_0=(PA)w(PA)^{-1}=\diag(-1,1,1,...,1)$ and $P$ is some matrix of change of basis.
\begin{prop}\label{prop:nilpotentD}\hfill
	\begin{enumerate}
		\item If $x$ is a nonzero nilpotent element in $\mathfrak{p}$, then $x$ has the partition $[31^{2\ell-1}]$.
		\item There are exactly 2 nilpotent $K$-orbits in $\mathfrak{p}$: $\{0\}$ and $\mathcal{N}_\mathfrak{p}\setminus\{0\}.$
	\end{enumerate}
	
\end{prop}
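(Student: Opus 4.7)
I would work in the $\mathfrak{so}_{2\ell+2}(I)$ realization, where $\mathfrak{g} = \{x \in \mathfrak{gl}_{2\ell+2} : x + x^T = 0\}$ and $\sigma_0(x) = w_0 x w_0$ with $w_0 = \diag(-1, 1, \dots, 1)$, as described right before the proposition. The plan is to get an explicit block description of $\mathfrak{p}$, read off nilpotency from a closed-form computation of $x^2$ and $x^3$, and then identify the $K$-action.

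First, imposing $\sigma_0(x) = -x$ together with $x = -x^T$ forces the $(1,1)$-entry to be zero and the lower-right $(2\ell+1)\times(2\ell+1)$ block to vanish, so every $x \in \mathfrak{p}$ has the form
\[
x = \begin{pmatrix} 0 & v^T \\ -v & 0 \end{pmatrix}, \qquad v \in \mathbb{C}^{2\ell+1}.
\]
A direct computation gives
\[
x^2 = \begin{pmatrix} -v^T v & 0 \\ 0 & -vv^T \end{pmatrix}, \qquad x^3 = -(v^T v)\, x.
\]
Hence $x$ is nilpotent if and only if $v^T v = 0$; and when $v \neq 0$ is isotropic, one has $x^3 = 0$, $x^2 \neq 0$, $\rk(x) = 2$ (the matrix has one nonzero row and one nonzero column, meeting at zero), and $\rk(x^2) = \rk(vv^T) = 1$. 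Using $\rk(x^k) = \sum_j \max(d_j - k, 0)$ for the Jordan block sizes $d_j$ summing to $2\ell+2$, these rank data force exactly one block of size $3$ and the remaining $2\ell-1$ blocks of size $1$; this gives the partition $[3,1^{2\ell-1}]$, proving part (1).

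For part (2), I would identify the identity component $K$ explicitly: an element of $G^\sigma$ in the $\mathfrak{so}_{2\ell+2}(I)$ realization commutes with $w_0$, so it is block diagonal $\diag(a, h)$ with $a = \pm 1$, $h \in \mathrm{O}_{2\ell+1}$, and $a\det h = 1$; the identity component is therefore the subgroup with $a = 1$ and $h \in \mathrm{SO}_{2\ell+1}$. Conjugating the block form of $x$ by $\diag(1, h)$ and using $h^{-1} = h^T$ shows the $K$-action on $\mathfrak{p}$ corresponds to the defining representation of $\mathrm{SO}_{2\ell+1}$ on $v \in \mathbb{C}^{2\ell+1}$, and the condition $v^T v = 0$ is preserved. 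By Witt's extension theorem (or equivalently, transitivity of $\mathrm{SO}_{2\ell+1}$ on the punctured isotropic cone in $\mathbb{C}^{2\ell+1}$, $\ell \geq 1$), all nonzero isotropic $v$ lie in a single $\mathrm{SO}_{2\ell+1}$-orbit. Combined with part (1), this gives exactly the two orbits $\{0\}$ and $\mathcal{N}_{\mathfrak{p}} \setminus \{0\}$.

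There is no serious obstacle here; the only subtle point is making sure the identification $\mathfrak{p} \leftrightarrow \mathbb{C}^{2\ell+1}$ is $K$-equivariant with $K$ acting through the standard $\mathrm{SO}_{2\ell+1}$-representation, which is what makes Witt's theorem immediately applicable.
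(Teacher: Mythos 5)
Your proposal is correct and follows essentially the same route as the paper's proof: block-decompose $\mathfrak{p}$ in the $\mathfrak{so}_{2\ell+2}(I)$ realization to get the form $\begin{pmatrix} 0 & v^T \\ -v & 0 \end{pmatrix}$, compute $x^2$ and $x^3$ to read off nilpotency ($v^Tv=0$) and the Jordan type $[3,1^{2\ell-1}]$, then identify the $K$-action with the standard $\mathrm{SO}_{2\ell+1}$-action on $\mathbb{C}^{2\ell+1}$ and appeal to transitivity on the punctured isotropic cone. You simply spell out a couple of steps (the explicit rank bookkeeping for the Jordan type and the identification of the identity component of $G^\sigma$) that the paper leaves terse.
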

\begin{proof}
	Since $w_0 x w_0 =-x$, $x$ has the form
	\[x=\begin{pmatrix}
	0 & -u^t \\
	u &  0
	\end{pmatrix}
	\]
	where $u\in \mathbb{C}^{2l+1}$ is a nonzero column vector. Then
	\[x^2=\begin{pmatrix}
	-u^t u & 0 \\
	0 & -uu^t
	\end{pmatrix}.\]
	If $x^2=0$, then $uu^t=0$ which implies $u=0$, a contradiction. Since $\rk x=2$ and $x^2\neq 0$, $x$ has the partition $[31^{2n-1}]$. This proves the first part.
	
	
	The element of $K$ has the form
	\[k=\begin{pmatrix}
	1 & 0 \\
	0 & g
	\end{pmatrix}.\]
	where $g\in \mathrm{SO}_{2\ell+1}$ and $k$ acts on $x\in \mathfrak{p}$ by
	\[k\cdot x=kxk^{-1}=\begin{pmatrix}
	1 & -(gu)^t \\
	gu & 0
	\end{pmatrix}.\]
	Hence the action of $K$ on $\mathfrak{p}$ is the same as the action of $\mathrm{SO}_{2\ell+1}$ on $\mathbb{C}^{2\ell+1}$. Note that for every $k\geq 3$, $x^k$ has the scalar $u^T u$ on every nonzero entry. Since $x$ is nilpotent, $u^T u=0$. The result immediately follows since the action of $\mathrm{SO}_{2\ell+1}$ on $\{z\in \mathbb{C}^{2\ell+1}\mid z^T z=0\}$ has two orbits.
\end{proof}

\begin{theorem}\label{theorem:piD}
	For $j=0,1,...,\ell$,  we have 
		     \[  \pi(\mathcal{M}_{(1^j 0^{\ell-j})})   =   \begin{cases}    
		     \mathcal{N}_{\mathfrak{p}}\setminus\{0\}     &\quad  \text{ if }  j \text{ is odd}; \\
 		       \{0\}   &\quad \text{ if } j=0;  \\
		              \mathcal{N}_{\mathfrak{p}} &\quad  \text{ if } j \text{ is even and } j \geq 2.  \end{cases}    .\]   
		Moreover,    $\mathcal{M}_{(1 0^{\ell-1})}=\{(1+xt^{-1}+\frac{1}{2}x^2t^{-2})\cdot e_0\mid x\in \mathcal{N}_{\mathfrak{p}}\setminus\{0\}\}$. Consequently, $\pi$ maps $\mathcal{M}_{(1 0^{\ell-1})}$ isomorphically onto $\mathcal{N}_{\mathfrak{p}}\setminus\{0\}$. 
\end{theorem}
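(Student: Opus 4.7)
The plan is to prove the ``moreover'' statement first, which parametrizes $\mathcal{M}_{(1 0^{\ell-1})}$ explicitly, and then deduce the image description $\pi(\mathcal{M}_{(1^j 0^{\ell-j})})$ by a $K$-orbit analysis.  By Lemma~\ref{lemma:miracle} applied to $(D_{\ell+1}, 2)$, any $g \in \mathcal{M}_{(1^j 0^{\ell-j})}$ with $j \geq 1$ is of the form $g(t) = I + x_1 t^{-1} + x_2 t^{-2}$ with $\rk x_2 = j$ (the case $j=0$ is trivial, $g=I$), and $\pi(g \cdot e_0) = x_1$.  Expanding $g^T J g = J$ together with $gg^{-1}=I$ (using $g^{-1}=J^{-1}g^TJ$) and $\sigma$-invariance $\sigma(g(t)) = w\, g(-t)\, w = g(t)$, I extract: $x_1 \in \mathfrak{p}$; setting $y := x_2 - \tfrac{1}{2} x_1^2$, one has $y \in \mathfrak{k}$, $\{x_1, y\} = 0$, $[x_1^2,y]=0$, and $y^2 = \tfrac{1}{4} x_1^4$.

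For the $\supseteq$ direction of the moreover part, given $x \in \mathcal{N}_\mathfrak{p} \setminus \{0\}$, Proposition~\ref{prop:nilpotentD} gives partition $[3\,1^{2\ell-1}]$ and hence $x^3=0$; a direct expansion shows $g(t) := I + xt^{-1} + \tfrac{1}{2} x^2 t^{-2}$ lies in $G(\mathcal{O}^-)_0^\sigma$, and Lemma~\ref{lemma:miracle} with $\rk x^2 = 1$ places it in $\mathcal{M}_{(1 0^{\ell-1})}$.  For the $\subseteq$ direction, I work in the realization $\mathfrak{g} = \mathfrak{so}_{2\ell+2}(I)$ of Proposition~\ref{prop:nilpotentD}, writing
\[ x_1 = \begin{pmatrix} 0 & -u^T \\ u & 0 \end{pmatrix}, \qquad y = \begin{pmatrix} 0 & 0 \\ 0 & B \end{pmatrix}. \]
A direct computation gives $x_1^4 = -(u^T u)\, x_1^2$, so the $(1,1)$-entry of $y^2 = \tfrac{1}{4} x_1^4$ equals $(u^T u)^2/4$; but the top-left entry of $y^2$ vanishes, forcing $u^T u = 0$ and hence $x_1$ nilpotent with $x_1^3 = 0$ and $y^2 = 0$.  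The case $x_1 = 0$ is excluded because it would require a rank-one square-zero element of $\mathfrak{so}_{2\ell+2}$, which does not exist (the minimal nilpotent orbit there has rank $2$).  The anticommutation $\{x_1, y\} = 0$ translates into $Bu = 0$, and the rank-one condition on $x_2 = \tfrac{1}{2} x_1^2 + y$ becomes $\rk(B - \tfrac{1}{2} u u^T) = 1$.  Since $B \in \mathfrak{so}_{2\ell+1}$ with $B^2 = 0$ has $\rk B \in \{0,2,4,\dots\}$, the case $\rk B \geq 4$ is ruled out by $\rk(B-\tfrac{1}{2}uu^T)\geq \rk B -1$; for $\rk B = 2$, write $B - \tfrac{1}{2} u u^T = \alpha \beta^T$ and decompose into symmetric and skew parts to obtain $\alpha \beta^T + \beta \alpha^T = -u u^T$ (rank $\leq 1$), which forces $\alpha$ and $\beta$ proportional and thus $B = 0$, a contradiction.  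Hence $y = 0$ and $x_2 = \tfrac{1}{2} x_1^2$; the algebraic section $x \mapsto (I + xt^{-1} + \tfrac{1}{2} x^2 t^{-2}) \cdot e_0$ is then inverse to $\pi|_{\mathcal{M}_{(1 0^{\ell-1})}}$, giving the claimed isomorphism.

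For the full image description with $j \geq 1$, $\mathcal{N}_\mathfrak{p}$ has only the two $K$-orbits $\{0\}$ and $\mathcal{N}_\mathfrak{p} \setminus \{0\}$ (Proposition~\ref{prop:nilpotentD}), and $\pi$ is $K$-equivariant, so it remains to check which orbits are hit.  The zero orbit occurs iff $x_1 = 0$ is realizable, forcing $x_2 \in \mathfrak{so}_{2\ell+1}$ nilpotent, square-zero, of rank $j$; such elements exist iff $j$ is even (partitions $[2^{j}1^{2\ell+1-2j}]$ require $j$ even).  The nonzero orbit is reached by fixing any $x_1 \in \mathcal{N}_\mathfrak{p} \setminus \{0\}$ and constructing $x_2 = \tfrac{1}{2} x_1^2 + y$ with $y = \begin{pmatrix} 0 & 0 \\ 0 & B \end{pmatrix}$, where $B$ is supported on a two-dimensional isotropic plane in $u^\perp$ of even rank $2k$, placing $u$ either inside $\im B$ (giving $\rk(B-\tfrac{1}{2}uu^T)=2k$) or outside (giving $2k+1$), which realizes any desired $j \geq 1$.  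The main obstacle will be the symmetric/skew rank argument in the moreover proof: since the sum of a rank-two skew matrix and a rank-one symmetric matrix could in principle have rank one, ruling this out so as to force $B=0$ requires the careful parity/structure analysis sketched above.
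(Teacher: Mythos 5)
Your proof is correct and the overall structure (parametrize $\mathcal{M}_{(1\,0^{\ell-1})}$ explicitly, then use $K$-equivariance plus the two-orbit structure of $\mathcal{N}_{\mathfrak{p}}$) matches the paper's, but on the key ``moreover'' step you take a genuinely different route. Setting $z=x_2-\tfrac12 x_1^2$, the paper observes that $g(t)=(\iota(g(-t)))^T$ forces $x_2$ and the $t^{-2}$-coefficient $y'$ of $\iota(g(t))$ to be mutual adjoints, shows $\operatorname{Im}y=\operatorname{Im}y'$ via $\rk x_1^2=1$, and then invokes Lemma~\ref{lemma:adjointmap} together with $x_1^2=y+y'$ to conclude $y'=y$ and hence $x_2=\tfrac12 x_1^2$. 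You instead work entirely inside the block picture of $\mathfrak{so}_{2\ell+2}(I)$, derive $z=\left(\begin{smallmatrix}0&0\\0&B\end{smallmatrix}\right)$ with $B\in\mathfrak{so}_{2\ell+1}$, $\{x_1,z\}=0$, $z^2=\tfrac14 x_1^4$, read off $u^Tu=0$ from the $(1,1)$-entry, and then kill $B$ by a clean rank argument on $B-\tfrac12 uu^T$ (splitting into symmetric and skew parts and using that a rank-one symmetric matrix cannot be $\alpha\beta^T+\beta\alpha^T$ with $\alpha,\beta$ independent). This avoids Lemma~\ref{lemma:adjointmap} and the $\iota$-trick entirely and is, if anything, more self-contained; it also cleanly recovers the relations the paper only records later in Proposition~\ref{prop:fiber2}. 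Your treatment of the image statement for $j\geq 1$ is correct in outline: the criterion for hitting $\{0\}$ (existence of rank-$j$ square-zero $B$ in $\mathfrak{so}_{2\ell+1}$, which requires $j$ even) is exactly right, and the idea of producing nonzero $x_1$ by choosing $B$ of even rank and placing the isotropic vector $u$ inside or outside $\operatorname{Im}B$ to toggle the parity of $\rk(B-\tfrac12 uu^T)$ is the right mechanism. However, this last piece is only sketched; the paper instead exhibits explicit matrices $x_0,z_j$, which removes the need to carefully verify that $\rk(B-\tfrac12 uu^T)$ lands exactly on the desired value (the rank can drop by one depending on whether $u\in B(u^\perp)$, so the construction needs to be pinned down with a concrete choice, as the paper does). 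If you flesh out that one construction with explicit matrices in a hyperbolic basis, your argument becomes a complete alternative proof.
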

\begin{proof}
	By Lemma \ref{lemma:miracle}, let $g(t)\cdot e_0=(1+xt^{-1}+yt^{-2})\cdot e_0\in \mathcal{M}_{(1^j 0^{\ell-j})}.$ We work under the realizations $\mathfrak{so}_{2\ell+2}(I)$ and $\mathrm{SO}_{2\ell+2}(I)$. Since $g(t)$ is fixed by $\sigma_0$, $w_0 xw_0=-x$ and $w_0 yw_0=y$. Similar to the proof of Proposition \ref{prop:nilpotentD}, $x,y$ are in the form
	\begin{equation}\label{equation:xy type D}
	x=\begin{pmatrix}
	0 & -u^T \\
	u &  0
	\end{pmatrix},\hspace{0.5 cm}
	y=\begin{pmatrix}
	y_0 & 0 \\
	0 &  D
	\end{pmatrix}
	\end{equation}
	where $u\in \mathbb{C}^{2\ell+1}$ is a column vector, $y_0\in\mathbb{C}$, and $D\in\Mat_{(\ell-1)\times(\ell-1)}$. Since $g^T g=I$, the following equations hold:
	\begin{equation}\label{equation:conditionsD}
	x^T +x=0,\hspace{0.5 cm} x^T x+y^T + y=0,\hspace{0.5 cm} x^T y+y^T x=0,\hspace{0.5 cm} y^T y=0.
	\end{equation}
	Then $y_0=0$ and $u^T u=0$ which implies $x^3=0$. Hence $x\in\mathcal{N}_{\mathfrak{p}}$.
	
	Suppose that $j$ is odd and $x=0$. We have $y+y^T=0$ and $y^T y=0$. Then $y_0=0$ and $D$ is a nilpotent element of $\mathfrak{so}_{2\ell+1}$ with $D^2=0$. Since $\rk D=\rk y=j$, $D$ has the partition $[2^j1^{2\ell-2j+2}]$. This contradicts the classification of nilpotent orbits of type B, \cite[Theorem 5.1.2]{CM}. Hence $x\neq 0$. 
	Now, consider the matrix $x_0\in \mathcal{N}_{\mathfrak{p}}\setminus\{0\}$ defined by
	\[x_0=\begin{pmatrix}
	0 & \cdots & 0 & -1 & -i \\
	\vdots &  & &  & \\
	0 & & &  & \\
	1 & & & & \\
	i & & & & 
	\end{pmatrix}.\]
	Let $N$ be a nilpotent element in $\mathfrak{so}_{2\ell-1}(I)$ having the partition $[2^{j-1}1^{2\ell-j+1}].$ Such a matrix $N$ exists in view of \cite[Theorem 5.1.2]{CM}. Then the matrices $x_0$ and 
	$$y_0:=\diag(0,...,0,N,0,0)+\dfrac{1}{2}x_0^2$$
	satisfy the relations in (\ref{equation:conditionsD}). Hence $g(t):=1+x_0t^{-1}+y_0t^{-2}\in G(\mathcal{O})^\sigma$. By (\ref{equation:Cartan}), $g(t)\cdot e_0\in \mathcal{G}r_{\bar{\lambda}}$ for some $\bar{\lambda}=(a_1,a_2,...,a_{\ell})\in X_{*} (T)_{\sigma}^+$ with $a_1\geq...\geq a_{\ell}$. Since $\rk y_0=j$, by Lemma \ref{lemma:miracle}, $\bar{\lambda}=(1^j 0^{\ell-j})$ and hence $g(t)\cdot e_0 \in \mathcal{M}_{(1^j 0^{\ell-j})}$. Since $\pi$ is $K$-equivariant, the first part is done. By $K$-equivariance, the second part also follows. 
	 
	
	Suppose that $j$ is even. For each $j=0,2,4,...,2\lfloor\frac{\ell}{2}\rfloor$, consider the $\ell\times \ell$ matrix
	\[\begin{pmatrix}
	&  & 1 &  &  &  &  & \\ 
	& \reflectbox{$\ddots$} &  &  &  &  &  & \\ 
	1 &  &  &  &  &  &  & \\ 
	&  &  &  &  &  &  & \\ 
	&  &  &  &  &  &  & \\ 
	&  &  &  &  &  &  & -1\\ 
	&  &  &  &  &  & \reflectbox{$\ddots$} & \\ 
	&  &  &  &  & -1 &  & 
	\end{pmatrix}\]
	where there are $\frac{j}{2}$ copies of each 1 and -1. Denote $z_j$ the square zero matrix of size $2\ell+2$ whose $\ell\times \ell$ submatrix on the right top is replaced by the above matrix. Now we work under $\mathfrak{so}_{2\ell+2}(J)$ and $\mathrm{SO}_{2\ell+2}(J)$. Since $wz_jw=z_j$ and $\rk z_j=j$, we have $(1+z_j t^{-2})\cdot e_0\in \mathcal{M}_{(1^j 0^{\ell-j})}$ and then $\pi((1+z_j t^{-2})\cdot e_0)=0$. Let $x_0$ be the square zero matrix of size $2\ell+2$ whose $4\times 4$ submatrix at the center is replaced by
	\[\begin{pmatrix}
	0 & 1 & -1 & 0\\ 
	0 & 0 & 0 & 1\\ 
	0 & 0 & 0 & -1\\ 
	0 & 0 & 0 & 0
	\end{pmatrix}.\]
	Then $x_0\in\mathcal{N}_{\mathfrak{p}}\setminus\{0\}$, 
	Set $y_0=\frac{1}{2}x_0^2 +z_j$. Then $\rk y_0=j$. It can be checked that $x_0,y_0$ satisfy $wx_0w=-x_0, wy_0w=y_0$, and
	\begin{equation}
	\begin{aligned}
	& x_0^T J +Jx_0=0, \hspace{0.5 cm}  x_0^T Jx_0+y_0^T J+ Jy_0=0,\\
	& x_0^TJ y_0+y_0^T Jx_0=0, \hspace{0.5 cm}  y_0^T Jy_0=0.
	\end{aligned}
	\end{equation}
	Hence $h(t):=1+x_0t^{-1}+y_0t^{-2}\in G(\mathcal{O})^\sigma$. Similarly, one can show that  $h(t)\cdot e_0\in \mathcal{M}_{(1^j 0^{\ell-j})}$. This proves the second part.
	
	To prove the last part, let $x$ be a nonzero nilpotent element in $\mathfrak{p}$. Since $x$ has the partition $[31^{2\ell}]$, $\rk x^2=1$. It is easy to check that $(1+xt^{-1}+\frac{1}{2}x^2 t^{-2})\cdot e_0\in \mathcal{M}_{(1 0^{\ell-1})}$. Conversely, let $g(t)\cdot e_0=(1+xt^{-1}+yt^{-2})\cdot e_0\in\mathcal{M}_{(1 0^{\ell-1})}$. Let $\iota(g(t))=1+xt^{-1}+y't^{-2}$. Since $g(t)=g(t)^{-T}=(\iota(g(-t)))^T$, $y=(y')^T$. Then $y$ and $y'$ are adjoint each other under the symmetric form whose matrix is $I$. Note that $\rk y=\rk y'=1$. If $\im y\neq \im y'$, then $\rk x^2=\rk y+\rk y'=2$, a contradiction. Hence $\im y=\im y'$. By Lemma \ref{lemma:adjointmap}, $y'=y$ or $y'=-y$. By (\ref{equation:conditiony'}), $x^2=y+y'$ and hence $y'=y$. By (\ref{equation:conditionsD}), $x^T +x=0$ and $x^T x +y^T +y=0$. Then $y+y'=x^2=y+y^T$, so $y=y'=y^T$. Therefore,
	$g(t)=1+xt^{-1}+yt^{-2}=1+xt^{-1}+\frac{1}{2}x^2 t^{-2}.$
\end{proof}
\begin{prop}
\label{prop:fiber2}
	For $x\in\mathcal{N}_{\mathfrak{p}}$, write $x$ as in (\ref{equation:xy type D}). Then
	\begin{equation}\label{fiber2}
	\pi^{-1}(x)_{\rm red }\cong\{D\in \mathfrak{so}_{2\ell+1}\mid Du=0, D^2=0\}.\end{equation}
	In particular, $\pi^{-1}(0)_{\rm red}$ is isomorphic to the maximal order 2 nilpotent variety  in $\mathfrak{so}_{2\ell+1}$, and
	\[\dim\pi^{-1}(0)_{\rm red}= \begin{cases}
	\ell^2 & \quad \text{if } \ell \text{ is even;}\\
	\ell^2 -1 & \quad \text{if } \ell \text{ is odd.}\\
	\end{cases}\]
\end{prop}
\begin{proof}
	Under the realization $\mathfrak{so}_{2\ell+2}(I)$ and $\mathrm{SO}_{2\ell+2}(I)$, and the diagram automorphism $\sigma_0$, we have that $(1+xt^{-1}+yt^{-2})\cdot e_0 \in \mathcal{M}$ if and only if $w_0 y w_0=y$ and the conditions (\ref{equation:conditionsD}) hold. Set $z=y-\frac{1}{2}x^2$, these conditions are equivalent to
	\begin{equation}\label{equation:z D}
	z=\begin{pmatrix}
	0 & \\
	& D
	\end{pmatrix},\quad D\in \mathfrak{so}_{2\ell+1},\quad D^2=0,\quad Du=0,
	\end{equation}  
	where $u$ is given in (\ref{equation:xy type D}).
	Hence the isomorphism (\ref{fiber2}) holds.  In particular when $x=0$, 	$\pi^{-1}(0)_{\rm red}\cong \{D\in \mathfrak{so}_{2\ell+1}\mid D^2=0\}$ which is $\overline{\mathcal{O}}_{[2^k 1^{2\ell-2k+1}]}$ in $\mathfrak{so}_{2\ell+1}$ where $k$ is the maximal even integer. By the dimension formula, cf.\,\cite[Corollary 6.1.4]{CM},
	\[\dim\pi^{-1}(0)_{\rm red}= \begin{cases}
	\dim \mathcal{O}_{[2^\ell 1]}=\ell^2 & \quad \text{if } \ell \text{ is even;}\\
	\dim \mathcal{O}_{[2^{\ell-1} 1^3]}=\ell^2 -1 & \quad \text{if } \ell \text{ is odd}\\
	\end{cases}\]
	as desired.
\end{proof}

	Similar to the case $(A_{2\ell-1},2)$, it is not obvious to see if there are finitely many $K$-orbits in $\mathcal{M}_{(1^j 0^{\ell-j})}$. If $g(t)=1+xt^{-1}+(z+\frac{1}{2}x^2)t^{-2}$ such that $g(t)\cdot e_0\in\mathcal{M}_{(1^j 0^{\ell-j})}$, then $g(t)$ satisfies (\ref{equation:z D}).
	If the action of $K$ on the following anti-commuting nilpotent variety
	$$\{(x,z)\in \mathfrak{so}_{2\ell+2}(I)\times \mathfrak{so}_{2\ell+2}(I)\mid xz+zx=0, x, z \text{ nilpotent}\}$$
	by diagonal cojugation has finitely many orbits, then there are finitely many $K$-orbits in $\mathcal{M}_{(1^j 0^{\ell-j})}$.

\subsection{Theorem \ref{theorem:cellandorbit} for the field of positive characteristic}
In this subsection, we make a remark regarding Theorem  \ref{theorem:cellandorbit} when the filed $\mathbb{C}$ is replaced by an algebraically closed field $\mathrm{k}$ of positive characteristic $p$. 
\begin{theorem}
\label{main_thm_positve_char}
 Theorem \ref{theorem:cellandorbit} holds for the field $\mathrm{k}$ of characteristic $p$,  when 
 $$
 \begin{cases}
 p\geq 3 \quad  \text{ if } (X_N,r)=(A_{2\ell}, 2)\\
 p\geq 5 \quad \text{ if } (X_N,r)=(A_{2\ell-1}, 2)\\
 p\geq 3\quad  \text{ if }(X_N,r)=(D_{\ell+1}, 2) .
 \end{cases}
$$
\end{theorem}
\begin{proof}
Suppose that $p>2$. Given any element $L\in \mathcal{M}$, set $x=\pi(L)\in \mathfrak{p}$. 
When $(X_N,r)=(A_{2\ell}, 2)$, by the proof of Theorem \ref{theorem: isoA2l},   $x^2=0$. When $(X_N,r)=(A_{2\ell-1}, 2)$, by the proofs of Theorem \ref{theorem: isoA2l-1} and Theorem \ref{theorem:pi A2l-1}, $x^4=0$. Recall that $x\in \mathfrak{p}$ if and only if $x$ is self-adjoint with respect to a non-degenerate symmetric form (resp. symplectic form) when $\mathfrak{g}=A_{2\ell}$ (resp. $A_{2\ell-1}$).   
Under our assumption on the characteristic $p$, by the similar proof of \cite[Lemma 1.9]{Ja} for $x\in \mathfrak{p}$, we can find a nilpotent matrix $y$ in $\mathfrak{p}$ with the same order of $x$ and $h\in \mathfrak{k}$ such that $\{x,y,h\}$ is a $\mathfrak{sl}_2$-triple.  By \cite[Theorem 5.4.8]{Ca},  with the assumption on $p$, as a $\mathfrak{sl}_2$-representation, $V$ is completely reducible, i.e. we still have the decomposition $(8)$. Then  by the same argument as in Theorem \ref{theorem:classify}, all possible partitions of $x$ are exactly those that appear in Theorem \ref{theorem:cellandorbit}. 
When $(X_N,r)=(D_{\ell+1}, 2)$,  by the proof of Theorem \ref{theorem:piD}, $x^3=0$. When $p>2$, by \cite[Theorem 1.6]{Ja}, $x$ is either 0 or has partition $[31^{2\ell-1}]$, i.e. those that appear in  Theorem \ref{theorem:cellandorbit}.  Thus, all results in Section 4.1-4.3 remain true under our assumption on $p$. 
\end{proof}
We expect that Theorem \ref{theorem:cellandorbit} is true for any $p>2$.   Theorem \ref{theorem:cellandorbit}  relies on the classification theorem of nilpotent orbits in $\mathfrak{p}$. In fact, we expect Theorem  \ref{theorem:classify} and Theorem \ref{theorem:SO_{2n+1}} hold for any field $\mathrm{k}$ when the characteristic $p>2$. The reason is that the classification of nilpotent orbits in classical Lie algebra remains the same if $p>2$, see a proof in \cite[\S1.6-1.12]{Ja}.  A similar proof for the classification of nilpotent orbits in $\mathfrak{p}$ should also carry over when $p>2$. 

\section{Applications}
\label{sect:applications}

In this section, we describe some applications to the geometry of order 2 nilpotent varieties in the certain classical symmetric spaces. 

Let $\langle  , \rangle $ be a symmetric or symplectic non-degenerate bilinear form on a vector space $V$.  Recall that $\mathcal{A}$ is the space of all self-adjoint linear maps with respect to $\langle  , \rangle $.
Set $\mathcal{N}_{\mathcal{A},2 }$ denote the space of all nilpotent operators $x$ in $\mathcal{A}$ such that $x^2=0$.   If $\langle , \rangle$ is symmetric and $\dim V= 2n+1$, then ${\rm SO}_{2n+1}$-orbits in $\mathcal{N}_{\mathcal{A},2 }$ are classified by the partitions $[2^j 1^{2n+1-2j}]$ with $0\leq j\leq n$;  if $\langle , \rangle$ is symplectic and $\dim V= 2n$, then ${\rm Sp}_{2n}$-orbits in $\mathcal{N}_{\mathcal{A},2 }$ are classified by the partitions $[2^{2j} 1^{2n-2j}]$ with $0\leq j\leq \lfloor\frac{n}{2}\rfloor$.

\begin{theorem}
\label{thm_normal}
Assume that $\langle, \rangle $ is symplectic or symmetric and $\dim V$ is odd. Then 
any order 2 nilpotent variety  in $\mathcal{A}$ is normal.  
\end{theorem}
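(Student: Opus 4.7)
The plan is to transfer normality from twisted affine Schubert varieties to closures of order 2 nilpotent orbits, via the isomorphisms $\pi$ established in Section \ref{section:Schubert_Nilpotent}. First, I would choose the twisted datum matching the hypothesis: for $\langle,\rangle$ symplectic with $\dim V = 2\ell$, take $(X_N, r) = (A_{2\ell-1}, 2)$, so that $K = \mathrm{Sp}_{2\ell}$ acts on $\mathfrak{p}$, which under the standard realization identifies with $\mathcal{A}$; for $\langle,\rangle$ symmetric with $\dim V = 2\ell+1$, take $(A_{2\ell}, 2)$, so that $K = \mathrm{SO}_{2\ell+1}$ acts on $\mathfrak{p}\cong\mathcal{A}$. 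In both cases, the order 2 nilpotent varieties in $\mathcal{A}$ are precisely the closures in $\mathcal{N}_{\mathfrak{p},2}$ of the $K$-orbits $[2^{2j}1^{2\ell-4j}]$ (symplectic) or $[2^j 1^{2\ell-2j+1}]$ (symmetric odd).

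The next step is to realize each such orbit closure as the image under $\pi$ of an open subset of a twisted affine Schubert variety. By Theorem \ref{theorem: isoA2l-1} (respectively Theorem \ref{theorem: isoA2l}), $\pi$ is a $K$-equivariant isomorphism $\mathcal{M}' \cong \mathcal{N}_{\mathfrak{p},2}$ (respectively $\mathcal{M} \cong \mathcal{N}_{\mathfrak{p},2}$) sending each small cell $\mathcal{M}_{\bar{\lambda}}$ onto $\mathcal{O}_{\bar{\lambda}}$. A direct check on the dominance order in $X_{*}(T)^{+}_{\sigma}$ shows that the only small weights $\bar{\mu}\preceq \bar{\lambda}$ in the relevant families $(1^{2j}0^{\ell-2j})$ or $(1^{j}0^{\ell-j})$ are those of the same shape with smaller index, and this matches the dominance order of the corresponding partitions $[2^{\ast}1^{\ast}]$. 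Consequently,
\[
\overline{\mathcal{G}r}_{\bar{\lambda}}\cap \mathcal{G}r_0^- \subset \mathcal{M}\ \text{(resp.\,}\mathcal{M}'\text{)}, \qquad \pi\bigl(\overline{\mathcal{G}r}_{\bar{\lambda}}\cap \mathcal{G}r_0^-\bigr) = \overline{\mathcal{O}_{\bar{\lambda}}},
\]
and the restriction of $\pi$ to this closed subvariety is a scheme-theoretic isomorphism, since it is obtained by restricting a globally defined isomorphism.

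Finally, I would invoke normality of the twisted affine Schubert variety $\overline{\mathcal{G}r}_{\bar{\lambda}}$ for simply-connected $G$, established by Pappas--Rapoport and Zhu. Since $\overline{\mathcal{G}r}_{\bar{\lambda}}\cap \mathcal{G}r_0^-$ is an open subvariety of $\overline{\mathcal{G}r}_{\bar{\lambda}}$, it inherits normality; transported along the isomorphism of the previous step, this yields the desired normality of $\overline{\mathcal{O}_{\bar{\lambda}}}$. The main obstacle is the technical confirmation that the relevant normality theorem covers the fundamental-weight Schubert varieties arising in the twisted types $A_{2\ell-1}^{(2)}$ and $A_{2\ell}^{(2)}$; this is taken care of by the cited normality results, although in the $A_{2\ell}^{(2)}$ case one must check that the small weights $(1^{j}0^{\ell-j})$ fall within the range where normality is actually proved (as opposed to the exceptional non-normal Schubert varieties known in this twisted type).
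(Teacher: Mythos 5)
Your proposal is correct and follows essentially the same route as the paper's proof: identify each order-2 nilpotent orbit closure with $\overline{\mathcal{G}r}_{\bar\lambda}\cap\mathcal{G}r_0^-$ via Theorems \ref{theorem: isoA2l} and \ref{theorem: isoA2l-1}, observe that this is an open subvariety of the normal twisted Schubert variety $\overline{\mathcal{G}r}_{\bar\lambda}$, and transport normality. Your closing caveat about exceptional non-normal twisted Schubert varieties is not actually an obstacle here, since over $\mathbb{C}$ twisted affine Schubert varieties are normal without exception (the pathologies cited in Haines--Louren\c{c}o--Richarz arise only in small positive characteristic, which is relevant to Remark 5.2 but not to the theorem as stated).
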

\begin{proof}
By Theorem \ref{theorem: isoA2l} and Theorem \ref{theorem: isoA2l-1},  for any order 2 nilpotent variety $\overline{\mathcal{O}}$ in $\mathcal{A}$, $\overline{\mathcal{O}}$ is isomorphic to $\overline{\mathcal{M}}_{\bar{\lambda}}:= \overline{\mathcal{G}r}_{\bar{\lambda}}\cap  \mathcal{G}r_0^-$ for a small dominant weight $\bar{\lambda}$ of $H$.  Note that $\overline{\mathcal{M}}_{\bar{\lambda}}$ is an open subset of the twisted Schubert variety $\overline{\mathcal{G}r }_{\bar{\lambda}}$ and $\overline{\mathcal{G}r }_{\bar{\lambda}}$ is a normal variety (cf.\,\cite[Theorem 0.3]{PR}). It follows that $\overline{\mathcal{O}}$  is also normal. 
\end{proof}
In fact, when $\langle , \rangle$ is symplectic,  any nilpotent variety in $\mathcal{A}$ is normal,  see \cite{Oh}.  In {\it loc.cit.}, Ohta also showed that not all nilpotent varieties are $\mathcal{N}_\mathfrak{p}$ is normal, when $\langle  ,\rangle$ is symmetric.  When $\langle  ,\rangle$ is symmetric and $\dim V$ is odd,  this theorem seems to be new.

\begin{remark}
Theorem \ref{thm_normal} is true for any field $k$ of characteristic $p>2$, as one can see that the classification theorem in Section \ref{sect_3} still holds for order 2 nilpotent orbits, and the arguments in  Theorem \ref{theorem: isoA2l}, Theorem \ref{theorem: isoA2l-1} applies as well. See the discussions in the proof of Theorem \ref{main_thm_positve_char}.  The same remark applies to the following Theorem \ref{thm_duality} and Theorem \ref{thm_locus}
\end{remark}

For any variety $X$, let ${\rm IC}_X$ denote the intersection cohomology sheaf on $X$. The perverse sheaf ${\rm IC}_X$ captures the singularity of the variety $X$.  
For any $x\in X$,  we denote by $\mathscr{H}^k_x( {\rm IC}_X )$ the $k$-th cohomology of the stalk of ${\rm IC}_X$ at $x$.

\begin{theorem}
\label{thm_duality}
\begin{enumerate}
\item When $\langle  ,\rangle$ is symmetric and $\dim V=2n+1$,   for any $0\leq j\leq n$, let $\mathcal{O}_j$ denote the nilpotent orbit in $\mathcal{A}$ associated to the partition $[2^j 1^{2n+1-2j}]$ and let 
 $\mathcal{O}'_{j}$ denote the nilpotent orbit in $\mathfrak{sp}_{2n }$ associated to the partition  $[2^{j} 1^{2n-2j}]$,  we have 
\begin{equation*}
\label{dimension_orbit1}
  \dim  \mathcal{O}_{j}= \dim  \mathcal{O}'_{j}  = j(2n+1-j) .\end{equation*}
Moreover,  for any $x\in \mathcal{O}_{[2^i 1^{2n+1-2i}]}$ and $x'\in \mathcal{O}'_{[2^i 1^{2n-2i}]} $, and for any $k\in \mathbb{Z}$, 
\[\dim  \mathscr{H}_x^k({\rm IC}_{ \overline{ \mathcal{O}}_{ j } } ) = \dim  \mathscr{H}_x^k({\rm IC}_{ \overline{ \mathcal{O}'}_{j} }  )   . \]

\item  When $\langle  ,\rangle$ is symplectic and $\dim V=2n$,  for any $0\leq j \leq \lfloor\frac{n}{2}\rfloor$, let $\mathcal{O}_{2j}$ denote the nilpotent orbit in $\mathcal{A}$ associated to the partition $[2^{2j} 1^{2n-4j}]$  and let $\mathcal{O}'_{2j}$ denote the nilpotent orbit in $\mathfrak{so}_{2n+1}$ associated to the partition  $[2^{2j} 1^{2n+1-4j}]$, we have 
\begin{equation*} 
\label{dimension_orbit2}
 \dim  \mathcal{O}_{2j}= \dim  \mathcal{O}'_{2j}  = 4j(n-j), \end{equation*}
Moreover,  for any integer $0\leq i\leq j$, $x\in  \mathcal{O}_{2i}$, $x'\in  \mathcal{O}'_{2i}$, and for any $k\in \mathbb{Z}$, we have 
\[ \dim  \mathscr{H}_x^k({\rm IC}_{ \overline{ \mathcal{O}}_{2j} }  ) = \dim  \mathscr{H}_x^k({\rm IC}_{ \overline{ \mathcal{O}'}_{2j} }  )   . \]
\end{enumerate}
\end{theorem}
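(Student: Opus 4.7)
The dimension equalities come from direct calculation. Applying Theorem~\ref{theorem:dimension} to the partitions $[2^j 1^{2n+1-2j}]$ (with $m=2n+1$) and $[2^{2j} 1^{2n-4j}]$ (with $m=2n$) gives $\dim \mathcal{O}_j = j(2n+1-j)$ and $\dim \mathcal{O}_{2j}=4j(n-j)$. For the $\mathfrak{sp}_{2n}$- and $\mathfrak{so}_{2n+1}$-side, the standard centralizer-dimension formulas for classical nilpotent orbits (cf.\ \cite[Ch.~6]{CM}), applied to $[2^j 1^{2n-2j}]$ and $[2^{2j} 1^{2n+1-4j}]$, produce the same values.

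For the equality of IC stalk cohomologies, the strategy is to realize both orbit closures as open subvarieties of Schubert varieties in affine Grassmannians whose Tannakian groups coincide (or are centrally isogenous in a harmless way). On the $\mathcal{A}$-side, Theorem~\ref{theorem: isoA2l} (resp.\ Theorem~\ref{theorem: isoA2l-1}), combined with the closure argument already used in the proof of Theorem~\ref{thm_normal}, gives isomorphisms $\overline{\mathcal{O}}_j \cong \overline{\mathcal{G}r}_{\omega_j}\cap \mathcal{G}r_0^-$ and $\overline{\mathcal{O}}_{2j}\cong \overline{\mathcal{G}r}_{\omega_{2j}}\cap \mathcal{G}r_0^-$ onto open subvarieties of twisted Schubert varieties in the $(A_{2n},2)$- and $(A_{2n-1},2)$-affine Grassmannians, matching the orbit and Schubert stratifications stratum by stratum. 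Consequently $\mathrm{IC}_{\overline{\mathcal{O}}_j}$ (resp.\ $\mathrm{IC}_{\overline{\mathcal{O}}_{2j}}$) is the restriction of $\mathrm{IC}_{\overline{\mathcal{G}r}_{\omega_j}}$ (resp.\ $\mathrm{IC}_{\overline{\mathcal{G}r}_{\omega_{2j}}}$). Zhu's ramified geometric Satake \cite{Zh} then identifies the stalk cohomology at $e_{\omega_i}$ (resp.\ $e_{\omega_{2i}}$) with the type-$B_n$ (resp.\ type-$C_n$) Kazhdan--Lusztig data attached to the pair $(\omega_i,\omega_j)$ (resp.\ $(\omega_{2i},\omega_{2j})$), which in turn reads off the weight multiplicity $\dim V^H(\omega_j)_{\omega_i}$ with $H=\mathrm{PSO}_{2n+1}$ in Part~1 and $H=\mathrm{PSp}_{2n}$ in Part~2.

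For the $\mathfrak{sp}_{2n}$- and $\mathfrak{so}_{2n+1}$-side, the plan is to run a completely parallel argument in the \emph{untwisted} affine Grassmannian of $\mathrm{Sp}_{2n}$ (for Part~1) or of $\mathrm{SO}_{2n+1}$ (for Part~2). The untwisted Achar--Henderson framework \cite{AH}, upgraded to an isomorphism by a case-by-case computation in the spirit of Section~\ref{section:Schubert_Nilpotent}, should realize $\overline{\mathcal{O}}'_j$ (resp.\ $\overline{\mathcal{O}}'_{2j}$) as an open subvariety of $\overline{\mathrm{Gr}}_{\omega_j}\cap \mathrm{Gr}_0^-$ (resp.\ $\overline{\mathrm{Gr}}_{\omega_{2j}}\cap \mathrm{Gr}_0^-$); the classical Mirkovi\'c--Vilonen geometric Satake then identifies the stalk cohomology with the same type-$B_n$ (resp.\ $C_n$) Kazhdan--Lusztig data but now read off the Langlands dual group. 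A direct root-datum check gives $\check{(\mathrm{Sp}_{2n})}=\mathrm{SO}_{2n+1}=\mathrm{PSO}_{2n+1}$ in Part~1, so the two representations are literally equal, and $\check{(\mathrm{SO}_{2n+1})}=\mathrm{Sp}_{2n}$ in Part~2, where one further uses that $\omega_{2j}$ lies in the root lattice of $C_n$ so that the irreducible $\mathrm{Sp}_{2n}$-representation of highest weight $\omega_{2j}$ factors through $\mathrm{PSp}_{2n}$ and equals $V^{\mathrm{PSp}_{2n}}(\omega_{2j})$. Either way the weight multiplicities (and hence all the stalk cohomology dimensions) agree term by term.

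The chief obstacle is the untwisted realization step. \cite{AH} supplies only a finite, $\mathbb{Z}/2\mathbb{Z}$-equivariant map from $\overline{\mathrm{Gr}}_\lambda\cap \mathrm{Gr}_0^-$ to the union of small orbits, so one must verify that for the specific small coweights $\omega_j$ and $\omega_{2j}$ this map is actually an isomorphism onto the single orbit closure $\overline{\mathcal{O}}'_j$ or $\overline{\mathcal{O}}'_{2j}$. This is expected to follow from the untwisted analogues of Lemma~\ref{lemma:miracleAH} and the anti-involution of Lemma~\ref{lemma:iota}, argued exactly as in the proofs of Theorem~\ref{theorem: isoA2l} and Theorem~\ref{theorem: isoA2l-1}; alternatively one may extract it from Chen--Vilonen--Xue \cite{CVX}, whose treatment of the parallel symmetric-matrix case already provides the requisite geometric Satake setup on the untwisted side.
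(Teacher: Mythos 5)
Your dimension computation and the twisted-side embeddings are exactly what the paper does. On the untwisted side you correctly flag that Achar--Henderson a priori only give a finite $\mathbb{Z}/2$-equivariant map, so one must check it restricts to an isomorphism for the specific small coweights; the paper glosses over this by simply citing \cite{AH}, so your caution is warranted rather than a flaw in your plan.

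Where your argument diverges materially is the mechanism for matching the stalk cohomologies. You route through ramified and unramified geometric Satake and compare weight multiplicities $\dim V^H(\omega_j)_{\omega_i}$ of the dual Tannakian groups. But the statement to be proved is a \emph{degree-by-degree} equality $\dim \mathscr{H}^k_x(\mathrm{IC}_{\overline{\mathcal{O}}_j}) = \dim \mathscr{H}^k_{x'}(\mathrm{IC}_{\overline{\mathcal{O}'}_j})$ for every $k$; weight multiplicities only give the sum $\sum_k \dim \mathscr{H}^k$. What is needed is the \emph{graded} multiplicity, i.e.\ the Kostka--Foulkes or affine Kazhdan--Lusztig polynomial, not its value at $q=1$. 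You do write ``Kazhdan--Lusztig data,'' which is what is actually required, but then you reduce it to a plain weight multiplicity, which loses information. The paper's observation cuts this Gordian knot: the twisted affine Grassmannian $\mathcal{G}r_{\mathrm{SL}_{2n+1}}$ and the untwisted affine Grassmannian $\Gr_{\mathrm{Sp}_{2n}}$ (resp.\ $\mathcal{G}r_{\mathrm{SL}_{2n}}$ and $\Gr_{\mathrm{Spin}_{2n+1}}$) have the \emph{same underlying affine Weyl group}, with the small coweights $\omega_j$ being literally the same elements. By \cite{KL} the graded stalk dimensions of the two IC sheaves are both given by the same Kazhdan--Lusztig polynomial $P_{\omega_i,\omega_j}(q)$, so the degree-by-degree equality is immediate. (The paper also uses parity vanishing for IC of Schubert varieties, which you do not state but which is needed to even make sense of the odd-degree vanishing.) Your detour through the Tannakian side and the isogeny bookkeeping ($\mathrm{SO}_{2n+1}$ vs.\ $\mathrm{PSO}_{2n+1}$, $\mathrm{Sp}_{2n}$ vs.\ $\mathrm{PSp}_{2n}$, $\omega_{2j}$ in the root lattice) can be made to work and is essentially the Brylinski--Kostant alternative the paper mentions in passing, but it is longer, and you should upgrade ``weight multiplicity'' to ``graded weight multiplicity / Brylinski--Kostant jump polynomial'' to actually conclude.
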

\begin{proof}
We first prove part 1). 
By Theorem \ref{theorem:dimension} and \cite[Corollary 6.1.4]{CM},  it is easy to verify $ \dim  \mathcal{O}_{j}= \dim  \mathcal{O}'_{j}  = j(2n+1-j) $. 
  By Theorem \ref{theorem: isoA2l},   $ \overline{ \mathcal{O}}_{j }$ can be embedded into an open subset in the twisted affine Schubert variety $\overline{\mathcal{G}r}_{\omega_j }$ associated to $(\mathrm{SL}_{2n+1} , \sigma)$.  On the other hand, in view of \cite{AH},  $ \overline{ \mathcal{O}'}_{j}$ can be embedded into the untwisted affine Schubert variety $\overline{\Gr}^{\omega_j}_{\mathrm{Sp}_{2n}}$ in the affine Grassmannian $\overline{\Gr}_{\mathrm{Sp}_{2n}}$ of $\mathrm{Sp}_{2n}$.  Set 
  \[ \mathcal{F}=  {\rm IC}_{ \overline{ \mathcal{O}}_{ j } } [-\dim  \overline{ \mathcal{O}}_{ j }    ] , \quad  \text{ and }   \mathcal{F}'= {\rm IC}_{\overline{ \mathcal{O}'}_{ j }} [-\dim  \overline{ \mathcal{O}'}_{ j } ]  . \]
  By purity vanishing property of  intersection cohomology sheaf of Schubert varieties (cf.\,\cite{KL}),  $\mathscr{H}_x^k(\mathcal{F})= \mathscr{H}_{x'}^k(\mathcal{F}')=0$ when $k$ is odd. Equivalently, 
  \[ \mathscr{H}_x^k({\rm IC}_{ \overline{ \mathcal{O}}_{ j } } ) =   \mathscr{H}_x^k({\rm IC}_{ \overline{ \mathcal{O}'}_{j} }  )=0 \]
for any odd integer $k$, as $\dim \overline{ \mathcal{O}}_{ j } =\dim \overline{ \mathcal{O}'}_{ j } $ is even.
  
 Note that the affine Grassmannian $\Gr_{\mathrm{Sp}_{2n}}$ and the twisted affine Grassmannian $\mathcal{G}r_{{\rm SL}_{2n+1} }$ have the same underlying affine Weyl group.  Applying the results in \cite{KL}, the polynomials $\sum  \dim  \mathscr{H}_x^{2k}( \mathcal{F} ) q^k$ and $\sum   \mathscr{H}_x^{2k}( \mathcal{F}' )   q^k  $ are 
both equal to the same  Kazhdan-Lusztig polynomial $P_{\omega_i, \omega_j} (q)$ for the affine Weyl group of $\mathfrak{so}_{2n+1}$.  It follows that 
 \[ \dim \mathscr{H}_x^k({\rm IC}_{ \overline{ \mathcal{O}}_{ j } } ) = \dim   \mathscr{H}_x^k({\rm IC}_{ \overline{ \mathcal{O}'}_{j} }  ) \]
for all even integer $k$. Alternatively, one can see these two polynomials are equal, as they both coincide with the jump polynomial of the Brylinsky-Kostant filtration on the irreducible representation $V_{\omega_j}$ of $H$, see \cite{Bry,Zh}.

For the second part of the theorem, the proof is almost the same, except that by Theorem \ref{theorem: isoA2l-1}, $ \overline{ \mathcal{O}}_{2j}$ can be openly embedded into the twisted affine Schubert variety $\overline{\mathcal{G}r}_{\omega_{2j}} $ associated to $(\mathrm{SL}_{2n} ,  \sigma)$, and  $ \overline{ \mathcal{O}}'_{2j}$ can be openly embedded into  the affine Schubert variety $\overline{ \Gr}^{\omega_{2j}} _{\mathrm{Spin}_{2n+1} }$.  
\end{proof}
Part 1) of this theorem was due to Chen-Xue-Vilonen \cite{CVX} by different methods.  This theorem shows that there is a natural bijection between  order 2 nilpotent varieties in $\mathcal{A}$ and order 2 nilpotent varieties in its dual classical Lie algebras, such that they share similar geometry and singularities. 

We now describe another application. 

\begin{theorem}
\label{thm_locus}
If $\langle  , \rangle $ is symplectic, then the smooth locus of any order 2 nilpotent variety in $\mathcal{A}$ is the open nilpotent orbit.
\end{theorem}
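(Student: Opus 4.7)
By Theorem~\ref{theorem:classify} the $\mathrm{Sp}_{2n}$-orbits on the order $2$ nilpotent cone of $\mathcal{A}$ are precisely $\mathcal{O}_{[2^{2i}1^{2n-4i}]}$ for $0\le i\le \lfloor n/2\rfloor$, and every order $2$ nilpotent variety in $\mathcal{A}$ has the form $\overline{\mathcal{O}}:=\overline{\mathcal{O}}_{[2^{2j}1^{2n-4j}]}$. Its smooth locus is $\mathrm{Sp}_{2n}$-stable, hence a union of these orbits, and always contains the open orbit (which is itself smooth). It therefore suffices to show, for each $j\geq 1$ and each $0\le i<j$, that $\overline{\mathcal{O}}$ is singular at some (and hence every) point of $\mathcal{O}_{[2^{2i}1^{2n-4i}]}$.

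I first handle the base case $i=0$ via a cone argument. The dilation $X\mapsto tX$ preserves both $X^2=0$ and the rank, so $\overline{\mathcal{O}}$ is an affine cone in $\mathcal{A}$ with apex $0$. If $\overline{\mathcal{O}}$ were smooth at $0$ then $\dim T_0\overline{\mathcal{O}}=\dim\overline{\mathcal{O}}$; but every $X\in\overline{\mathcal{O}}$ is the tangent vector at $0$ of the line $t\mapsto tX$, so $\overline{\mathcal{O}}\subseteq T_0\overline{\mathcal{O}}$, and equality of dimensions together with the irreducibility of $\overline{\mathcal{O}}$ would force $\overline{\mathcal{O}}=T_0\overline{\mathcal{O}}$, a linear subspace of $\mathcal{A}$. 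For $j\geq 1$ this is impossible: for generic $X_1,X_2\in\mathcal{O}$ one computes $(X_1+X_2)^2=X_1X_2+X_2X_1\ne 0$, so $X_1+X_2\notin\overline{\mathcal{O}}$ and $\overline{\mathcal{O}}$ is not closed under addition. Hence $\overline{\mathcal{O}}$ is singular at $0$.

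For general $0<i<j$ I reduce to the base case via a transverse slice. Given $X\in\mathcal{O}_{[2^{2i}1^{2n-4i}]}$, Lemma~\ref{lemma:JMgrading} supplies an $\mathfrak{sl}_2$-triple $\{H,X,Y\}$ with $H\in\mathfrak{g}_B$ and $Y\in\mathcal{A}$. By the Slodowy-type transverse slice theorem for symmetric spaces, the affine subspace $\mathcal{S}:=X+\mathcal{A}^Y$, where $\mathcal{A}^Y$ denotes the centralizer of $Y$ in $\mathcal{A}$, meets the $\mathrm{Sp}_{2n}$-orbit of $X$ transversally at $X$ (since $\mathcal{A}=[\mathfrak{g}_B,X]\oplus\mathcal{A}^Y$), and the action map $\mathrm{Sp}_{2n}\times\mathcal{S}\to\mathcal{A}$ is smooth at $(e,X)$. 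Consequently $\overline{\mathcal{O}}$ is smooth at $X$ if and only if $\overline{\mathcal{O}}\cap\mathcal{S}$ is smooth at $X$, reducing the problem to a slice analysis.

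The principal remaining step is to identify $\overline{\mathcal{O}}\cap\mathcal{S}$ with a smaller order $2$ nilpotent variety. Write $V=M(1)\oplus M(0)$ for the $\mathfrak{sl}_2$-decomposition attached to $\{H,X,Y\}$, where $M(1)$ of dimension $4i$ collects the length-$2$ Jordan blocks of $X$ and $M(0)$ of dimension $2n-4i$ the trivial blocks; by Lemma~\ref{lemma:newform} the restriction $\langle\cdot,\cdot\rangle|_{M(0)}$ is a nondegenerate symplectic form. A Kraft--Procesi-style blockwise computation, expanding $(X+W)^2=0$ for $W\in\mathcal{A}^Y$ along the $\mathfrak{sl}_2$-isotypic pieces and enforcing the rank bound, should identify the translated slice $(\overline{\mathcal{O}}\cap\mathcal{S})-X$ with the order $2$ nilpotent orbit closure $\overline{\mathcal{O}}_{[2^{2(j-i)}1^{2n-4j}]}$ inside the self-adjoint space $\mathcal{A}(M(0))$ attached to $M(0)$; the dimension check $\dim\overline{\mathcal{O}}-\dim\mathcal{O}_{[2^{2i}1^{2n-4i}]}=4(j-i)(n-i-j)$ matches the expected dimension $4(j-i)\bigl((n-2i)-(j-i)\bigr)$ of this smaller orbit closure. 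Since $j-i\geq 1$, the base case applied inside $\mathcal{A}(M(0))$ shows that $X$ is a singular point of $\overline{\mathcal{O}}\cap\mathcal{S}$, completing the argument. The main technical obstacle is carrying out this explicit slice identification: describing $\mathcal{A}^Y$ concretely via the $\mathfrak{sl}_2$-decomposition of $V$ and matching the constraints arising from $(X+W)^2=0$ together with the rank bound to the defining equations of the smaller order $2$ nilpotent variety.
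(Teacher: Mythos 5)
Your approach is genuinely different from the paper's, which is a one-line argument: by Theorem~\ref{theorem: isoA2l-1} each order~2 nilpotent variety $\overline{\mathcal{O}}_{[2^{2j}1^{2n-4j}]}$ embeds as an open subvariety of the twisted affine Schubert variety $\overline{\mathcal{G}r}_{\omega_{2j}}$ in $\mathcal{G}r_{\mathrm{SL}_{2n}}$, matching the orbit stratification to the cell stratification, and then one appeals to the theorem of Besson--Hong that the smooth locus of a twisted affine Schubert variety is its open cell. You instead give a direct, self-contained geometric argument: a cone argument at $0$ plus a Slodowy-slice reduction. What the paper's approach buys is brevity and the fact that it simultaneously explains the analogous phenomenon for Schubert varieties; what your approach buys is independence from the Schubert-variety machinery and from the external input of Besson--Hong, and it is the kind of argument that would transfer to other symmetric-pair settings where no affine-Grassmannian picture is available.

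Your outline is sound, but you have (honestly) flagged a real gap in the slice step, and I want to confirm it closes cleanly. Write $V=H(1)\oplus L(1)\oplus M(0)$ with $X\colon H(1)\xrightarrow{\sim}L(1)$, $X=0$ elsewhere, and $Y$ the partial inverse. A direct block computation of $\mathcal{A}^{Y}$ and then of the equation $(X+W)^2=0$ for $W\in\mathcal{A}^Y$ forces all blocks of $W$ involving $H(1)$ or $L(1)$ to vanish, so $W$ is supported on $M(0)$ and satisfies $W|_{M(0)}^2=0$; self-adjointness on $M(0)$ with respect to the restricted (nondegenerate symplectic) form is automatic, and the rank condition becomes $\mathrm{rk}\,W|_{M(0)}\le 2(j-i)$. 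So $(\overline{\mathcal{O}}\cap\mathcal{S})-X$ is literally the order-2 nilpotent variety $\overline{\mathcal{O}}_{[2^{2(j-i)}1^{2n-4j}]}$ in $\mathcal{A}(M(0))$, as you predicted, and your base case then applies. For the base case itself, the assertion that $\overline{\mathcal{O}}$ cannot be a linear subspace for $j\ge 1$ is best made airtight by noting that a linear orbit closure would be a $K$-stable subspace of $\mathcal{A}$; since $\mathcal{A}\cong\Lambda^2_0 V$ is irreducible as an $\mathrm{Sp}_{2n}$-module, the only such subspaces are $\{0\}$ and $\mathcal{A}$, and neither can equal $\overline{\mathcal{O}}_{[2^{2j}1^{2n-4j}]}$ with $1\le j$ (the latter because not every self-adjoint map squares to zero). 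With those two gaps filled, your argument is a correct alternative proof.
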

\begin{proof}
Let $\overline{\mathcal{O}}$ be any order 2 nilpotent variety in $\mathcal{A}$. By Theorem \ref{theorem: isoA2l-1},  $\overline{\mathcal{O}}$ can be openly embedded into a twisted Schubert variety $\overline{\mathcal{G}r}_{\bar{\lambda}}$  with $\bar{\lambda}$ small, in the twisted affine Grassmannian $\mathcal{G}r_{\mathrm{SL}_{2n} }$.  Then this theorem follows from  \cite[Theorem 1.2]{BH}.
\end{proof}

\end{document}